\makeatletter \@addtoreset{equation}{section} \makeatother
\setlist[enumerate,1]{label=(\arabic*).,font=\textup,
leftmargin=7mm,labelsep=1.5mm,topsep=0mm,itemsep=-0.8mm}
\setlist[enumerate,2]{label=(\alph*).,font=\textup,
leftmargin=7mm,labelsep=1.5mm,topsep=-0.8mm,itemsep=-0.8mm}
\newtheorem{thmA}{Theorem}
\newtheorem{coroA}[thmA]{Corollary}
\newtheorem{thm}{Theorem}[section]
\newtheorem{prop}[thm]{Proposition}
\newtheorem{lem}[thm]{Lemma}
\newtheorem{coro}[thm]{Corollary}
\newtheorem*{cconj}{Conjecture}
\theoremstyle{definition}
\newtheorem{defi}[thm]{Definition}
\newtheorem{remark}[thm]{Remark}
\newcommand{\semi}{\rtimes}
\newcommand{\zg}{\unlhd}
\newcommand{\ra}{\rightarrow}
\newcommand{\hH}{\mathcal{H}}
\newcommand{\Pj}{\mathcal{P}}
\newcommand{\PQ}{\mathcal{Q}}
\newcommand{\Q}{\mathbb{Q}}
\newcommand{\ZZ}{{\bf Z}}
\newcommand{\C}{{\bf C}}
\newcommand{\N}{{\bf N}}
\newcommand{\E}{{\bf E}}
\newcommand{\bO}{{\bf O}}
\newcommand{\bS}{{\bf S}}
\newcommand{\Qp}{\mathbb{Q}_p}
\newcommand{\Gal}{{\rm Gal}}
\newcommand{\Rad}{{\rm Rad}}
\newcommand{\Irr}{{\rm Irr}}
\newcommand{\IBr}{{\rm IBr}}
\newcommand{\IBrd}{{\rm IBrd}}
\newcommand{\soc}{{\rm soc}}
\newcommand{\dz}{{\rm dz}}
\newcommand{\GL}{{\rm GL}}
\newcommand{\Aut}{{\rm Aut}}
\newcommand{\Inn}{{\rm Inn}}
\newcommand{\DGN}{{\rm DGN}}
\newcommand{\W}{{\mathcal{W}}}
\newcommand{\rw}{{\mathcal{RW}}}
\newcommand{\Bl}{{\rm Bl}}
\newcommand{\bl}{{\rm bl}}
\newcommand{\which}{{\,|\,}}
\newcommand{\bwhich}{{\,\big|\,}}
\newcommand{\zs}[1]{\hspace{0.1mm}^{#1}\hspace{-0.4mm}}
\newcommand{\cW}{\mathscr{W}}
\newcommand{\cE}{\mathcal{E}}
\begin{document}

\title{\bf A reduction theorem for the Navarro Alperin weight conjecture}
\date{}
 \author{
  Zhicheng Feng \footnote{Shenzhen International Center for Mathematics, Southern University of Science and Technology, Shenzhen 518055, China. E-mail: fengzc@sustech.edu.cn}\quad ,
  Qulei Fu \footnote{Corresponding author, Shenzhen International Center for Mathematics, Southern University of Science and Technology, Shenzhen 518055, China. E-mail: quleifu@outlook.com}\quad ,
 Yuanyang Zhou  \footnote{School of Mathematics and Statistics, Cental China Normal University, Wuhan 430079, China. E-mail: zhouyuanyang@ccnu.edu.cn}
 }
 \maketitle

 
\begin{abstract}
The Alperin weight conjecture has been reduced to simple groups by Navarro and Tiep. In this paper, we investigate the Navarro Alperin weight conjecture, which includes Galois automorphisms and group automorphisms in comparison with the original version, and give a reduction to simple groups. As an application, we prove the conjecture for the finite groups with abelian Sylow $2$-subgroups.
\end{abstract}


\section{Introduction}\label{section-intro}
The Alperin weight (AW) conjecture, announced in 1986 by Alperin \cite{Alp-conj},  is one of the main problems in the representation theory of finite groups. 
Fix a prime $p$.
Let $G$ be a finite group. A ($p$-){\bf weight} of $G$ is a pair $(Q,\delta)$, where $Q$ is a $p$-subgroup of $G$ and $\delta$ is an irreducible complex character of $\N_G(Q)/Q$ such that $\delta(1)_p=|\N_G(Q)/Q|_p$ (here $n_p$ denotes the largest power of $p$ dividing the integer $n$). Denote by $\W(G)$ the set of weights of $G$. The group $G$ acts on the set $\W(G)$ by conjugation, and we denote by $\W(G)/\sim_G$ the set of $G$-orbits of $\W(G)$. As usual, $\IBr(G)$ denotes the set of irreducible ($p$-)Brauer characters of $G$.

\begin{cconj}[Alperin weight conjecture]
  For any finite group $G$, we have that $$|\IBr(G)|=|\W(G)/\sim_G|.$$
\end{cconj}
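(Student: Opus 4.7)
The Alperin weight conjecture is one of the central open problems in modular representation theory, so any proof proposal must be strategic rather than direct. The plan is to follow the Navarro--Tiep paradigm mentioned in the abstract: reduce the conjecture to a statement about non-abelian finite simple groups, which can then be attacked case by case using the classification.

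The first step is to set up an induction on $|G|$ and consider a minimal counterexample. Using Clifford theory, one partitions both sides of the proposed equality. Choosing a minimal normal subgroup $N$, the set $\IBr(G)$ fibres over the $G$-orbits of $\IBr(N)$; one expects a parallel fibration of $\W(G)/\sim_G$ over the $G$-orbits of a suitable notion of ``weights of $G$ covering a weight of $N$.'' For each orbit, the problem transfers to the inertia subgroup via extension and projective representation theory. The goal is that, when $N$ is either abelian or properly contains a characteristic subgroup, the inductive hypothesis yields the equality, so that the only remaining case is $N$ a direct product of isomorphic non-abelian simple groups with $G$ acting transitively on the factors.

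In that residual case, the counting problem transfers to a statement about a single simple group $S$ together with its automorphism group and Schur multiplier. The second step is therefore to formulate an \emph{inductive Alperin weight condition} for $S$: the existence of an $\Aut(S)$-equivariant bijection between $\IBr(S)$ and $\W(S)/\sim_S$ which is compatible with central extensions, in the sense that it preserves the block-theoretic and Clifford-theoretic invariants (cocycle classes of projective representations of character stabilizers, central characters, and so on). The reduction theorem would then read: if every non-abelian simple group satisfies this inductive condition at the prime $p$, then the Alperin weight conjecture holds for every finite group at $p$.

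The main obstacle is twofold. First, the inductive condition must be engineered with great care; it must be strong enough that the Clifford-theoretic bookkeeping (extensions from $N$ to its stabilizer, gluing of local weights on the cover, compatibility of cocycles) really assembles the local bijections into a global one, yet weak enough to be verifiable in practice. This balance is the conceptual heart of the reduction. Second, after the reduction is in place, one must verify the condition for every non-abelian simple group $S$. Alternating and sporadic groups are tractable by direct computation, but for the groups of Lie type one needs an explicit, $\Aut(S)$-equivariant parametrization of both $\IBr(S)$ and $\W(S)/\sim_S$, and the latter is at present only partially understood (chiefly in the defining characteristic and in a handful of cross-characteristic situations). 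Completing these verifications is the principal remaining obstacle to a full proof of the conjecture.
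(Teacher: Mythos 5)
There is a genuine gap here, but it is of an unusual kind: the statement you were asked to prove is the Alperin weight conjecture itself, which this paper (like the rest of the literature) does not prove — it is stated as an open conjecture, and the paper's actual contribution is a reduction theorem for the strengthened Galois version (Theorem A, via the inductive GAW condition of Definition \ref{Def:ind-GAW} and the reduction in Theorem \ref{Redution}), together with verifications of that condition in special cases (Lie type in defining characteristic, groups with abelian Sylow $2$-subgroups, $p$-solvable groups). Your proposal correctly recapitulates the Navarro--Tiep strategy that underlies this: induct on $|G|$, pass to a minimal normal subgroup, use Clifford theory and character-triple (here $\hH$-triple) machinery to push the counting to inertia groups, isolate the case of a direct product of isomorphic non-abelian simple groups, and encode the needed compatibilities (equivariance, central characters, cocycles of projective representations) into an inductive condition on simple groups. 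That matches the architecture of Sections \ref{sec_Character_pairs}--\ref{Sec_red} quite faithfully.

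However, a reduction plus an honest admission that the verification for all simple groups is incomplete is not a proof of the unconditional statement $|\IBr(G)|=|\W(G)/\sim_G|$, and you acknowledge this yourself in your final paragraph. Concretely, the step that fails is the last one: to conclude the conjecture for an arbitrary $G$ you need the inductive condition for \emph{every} non-abelian simple group involved in $G$ of order divisible by $p$, and this is known only for restricted families (defining characteristic, cyclic Sylow situations, a few small groups, and the cases treated here). So what you have written is a correct description of a research program — essentially the one Navarro--Tiep carried out for the reduction and which this paper refines to include Galois and group automorphisms — rather than a proof of the stated conjecture; no argument along these lines can close the statement until the case-by-case verification over the classification is completed.
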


At the time being, the most hopeful method is to reduce the problem to simple groups. In 2011, Navarro and Tiep \cite{Nav1} achieved a reduction of the AW conjecture to the so-called inductive AW condition for simple groups (i.e., AWC-good for simple groups); they proved that if every  finite non-abelian  simple group satisfies the inductive AW condition, then the AW conjecture holds for every finite group.
For recent developments concerning the verification of the inductive conditions, we refer to the survey paper \cite{FZ22}.

The McKay conjecture is another long-standing conjecture in the representation theory of finite groups, and it has been reduced to simple groups by Issacs, Malle and Navarro \cite{Isaacs_Reduction_of_McKay_conj}. Using this reduction theorem, the McKay conjecture has finally been proved \cite{MS16,CS23}.
In 2004, Navarro strengthened the McKay Conjecture by considering the role of Galois automorphisms in \cite{Nav-Ann}, where he conjectured that the bijection in the McKay conjecture can be chosen to be equivariant under certain Galois automorphisms action. 
This strengthened conjecture is called the McKay--Navarro conjecture, also referred to as the Galois--McKay conjecture, and is a refinement of the original form.
In 2020, Navarro, Sp\"ath and Vallejo \cite{Nav-GMckay} proved a reduction theorem of the McKay--Navarro conjecture to   finite non-abelian simple groups, affording a promising path for the proof of this conjecture.

 In \cite{Nav-Ann}, Navarro also pointed out that a similar idea can be applied to the Alperin weight conjecture. 
 We call this refinement of the Alperin weight conjecture  the \textbf{Navarro Alperin weight (NAW) conjecture}, or the \textbf{Galois Alperin weight conjecture}.
 Moreover, it is implicit in the reduction of Navarro and Tiep \cite{Nav1} that the AW bijection is equivariant under group automorphisms if the inductive condition holds for all simple groups.
In this paper, we adopt the formulation of the NAW conjecture that incorporates both Galois automorphisms and group automorphisms. 
The existence of such an equivariant bijection is a crucial requirement for establishing the corresponding inductive condition (see Definition \ref{Def:ind-GAW}).
 
To be more precise with the NAW conjecture, we introduce some notation.
Let $\Qp$ be the field of $p$-adic numbers, and let $\mathcal{K}=\Qp(w_n)$ be the field extension of $\Qp$ by a primitive $n$-th root of unity $w_n,$ where the integer $n$ is  big enough such that $\mathcal{K}$ splits for every considered finite group.
 Noticing that $\mathcal{K}$ is a complete discrete valuation field with the $p$-adic valuation, we let $\mathcal{O}$ and $F$ be the valuation ring and residue field of $\mathcal{K},$ respectively.
 We also assume that $F$ splits for every considered finite group.
 Note that $F$ is a finite field.
 Let $^{\ast}:\mathcal{O} \ra F$ be the canonical surjection.
 Let $\mathbb{F}_p$ be the prime subfield of $F.$
We assume that the maps in $\Gal(\mathcal{K}/\Q_p)$ and $\Gal(F/\mathbb{F}_p)$ are composed from the left, and we take exponential notation, writing $x^{\sigma}$ for the image of $x\in\mathcal{K}\sqcup F$ under the map $\sigma\in \Gal(\mathcal{K}/\Q_p) \sqcup \Gal(F/\mathbb{F}_p),$ respectively.
  Let $\hH=\Gal(\mathcal{K}/\Q_p),$  and we  mention that $\hH$ is a finite abelian group.
 Note that the $p$-modular system $(\mathcal{K},\mathcal{O},F)$ is $\hH$-fixed, and that 
the natural group homomorphism $f:\Gal(\mathcal{K}/\Qp)\ra \Gal(F/\mathbb{F}_p)$ induced by \[(x^{\ast})^{f(\sigma)}=(x^{\sigma})^{\ast},\forall \sigma\in\hH,\forall x\in \mathcal{O},\] is surjective.
 Through the group homomorphism $f,$ we also regard $\hH$ as a group of automorphisms of $F,$ and it contains every element in $\Gal(F/\mathbb{F}_p).$  
 Let $K$ be the subfield of $\mathcal{K}$ that is an extension of $\mathbb{Q}$ by all the $p'$-th roots of unity in $\mathcal{K},$ where $\mathbb{Q}$ is the field of rational numbers. Then the values of Brauer characters are taken over the field $K.$ Let $\sigma_p$ be the unique element in $\Gal(K/\mathbb{Q})$ such that $\epsilon^{\sigma_p}=\epsilon^p$ for every $p'$-th root of unity $\epsilon$ in $K.$ Then it is well known that the restriction of $\Gal(\mathcal{K}/\Qp)$ to $\Gal(K/\mathbb{Q})$ yields exactly the subgroup $\langle\sigma_p\rangle$ of $\Gal(K/\mathbb{Q}).$ See \cite[Section 2]{Turull_The_strengthened_Alperin_Weight     Conjecture_for.} for more details.

\begin{cconj}[NAW conjecture]
For any finite group $G,$ and let $\hH$ be described as above. Then there exists an $\hH\times\Aut(G)$-equivariant bijection from $ \IBr(G)$ to $\W(G)/\sim_G.$ 
\end{cconj}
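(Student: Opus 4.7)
The overall strategy is to prove the conjecture by combining a reduction theorem with a case-by-case verification across the Classification of Finite Simple Groups (CFSG), paralleling how the McKay conjecture was ultimately established via its inductive version. Concretely, one formulates an \emph{inductive Galois Alperin weight condition} on a finite non-abelian simple group $S$, asserting the existence of an $\hH\times\Aut(S)$-equivariant bijection between $\IBr(S)$ and $\W(S)/\sim_S$ together with extra compatibilities (matching inertia groups, projective representations, and $p$-blocks of central covers). One then proves that if the inductive condition holds for every non-abelian simple group, then the stated bijection exists for every finite group $G$, and separately verifies the condition for every simple group. Combined, these two halves yield the unconditional statement.

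For the reduction half, I would take a minimal counterexample $G$ and analyze a minimal normal subgroup $N=S_1\times\cdots\times S_n$ with each $S_i$ simple. Parametrize $\IBr(G)$ via Clifford theory over $N$, and $\W(G)/\sim_G$ via the weight-side Clifford machinery developed in \cite{Nav1} and refined in \cite{MRR23}. The compatibility clauses of the inductive condition allow the bijections for the $S_i$ to glue into an $\hH\times\Aut(G)$-equivariant bijection for $G$, contradicting minimality. Tracking the $\hH$-action through these gluings is the main extra burden relative to \cite{Nav1,MRR23}; to handle it one imports the Galois-theoretic techniques developed for the McKay--Navarro reduction in \cite{Nav-GMckay} (stability of $2$-cocycles under Galois action, together with Turull's Brauer--Clifford theory over sufficiently large $p$-adic fields, cf.\ \cite{Turull_The_strengthened_Alperin_Weight Conjecture_for.}).

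For the verification half, I would proceed family by family: sporadic groups by direct computation using the known character tables and decomposition matrices; alternating groups by combining the combinatorics of $p$-cores and $p$-quotients with the explicit $\hH$-action on symmetric-group characters; groups of Lie type in the defining characteristic $p$ via Steinberg's highest-weight parametrization, under which the Galois action on restricted weights is transparent and the weight bijection is essentially canonical. Throughout, $p$-solvable quotients that appear as normalizer sections are handled by the main theorem of \cite{Turull_The_strengthened_Alperin_Weight Conjecture_for.}.

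The main obstacle is the verification of the inductive condition for groups of Lie type in non-defining characteristic. There one must construct bijections compatible simultaneously with $\hH$ (which acts non-trivially on both factors of a generalized Jordan decomposition of Brauer characters) and with $\Aut(S)$ (including diagonal, field, and graph automorphisms), and one must do so uniformly in the rank. While substantial work of Malle, Cabanes--Sp\"ath, Brunat--Dudas--Taylor, and Ruhstorfer provides the non-Galois ingredients, the uniform Galois-equivariant enhancement is not currently available; bridging this gap is the essential difficulty that any unconditional proof of the GAW conjecture must overcome.
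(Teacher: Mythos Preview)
The statement you are addressing is a \emph{conjecture}, not a theorem, and the paper does not claim to prove it. The paper's contribution is precisely the program you outline in your first two paragraphs: it formulates an inductive GAW condition (Definition~\ref{Def:ind-GAW}), proves the reduction theorem (Theorem~\ref{Redution}, which is Theorem~A) showing that the conjecture follows once every simple group satisfies the inductive condition, and then verifies that condition in a handful of cases (defining characteristic, and simple groups with abelian Sylow $2$-subgroups). Your sketch of the reduction is broadly consonant with the paper's approach, though the paper does not argue by minimal counterexample but by direct induction on $|G/Z|$ inside the more general Theorem~\ref{Redution}, and the key technical inputs are the $\hH$-equivariant centralization of Brauer triples (Theorem~\ref{thm:centralize}) and the $\hH$-equivariant DGN correspondence (Theorem~\ref{Thm:DGN}) rather than an appeal to Brauer--Clifford theory in the style of Turull.

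Where your proposal ceases to be a proof is exactly where you say it does: the verification of the inductive GAW condition for groups of Lie type in non-defining characteristic is not available, and you offer no argument for it beyond naming relevant literature on the non-Galois side. That is an honest assessment, but it means what you have written is a research program, not a proof. The paper is equally candid about this; it establishes the reduction and some base cases, and leaves the conjecture open in general.
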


In this paper, we define the inductive {NAW} condition for finite non-abelian simple groups (see Definition \ref{Def:ind-GAW}), and establish the following reduction theorem. 
We say that a group $X$ is {involved} in a finite group $G$ if there are two subgroups $N, H$ of $G$ such that $N$ is a normal in $H$ and $H/N\cong X.$

\begin{thmA}\label{thm-reduction}
  For any finite group $G.$  If the inductive NAW condition (see Definition \ref{Def:ind-GAW}) holds for every finite non-abelian simple group involved in $G$ and of order divisible by $p$,  then the NAW conjecture is true for $G$ (at the prime $p$).
\end{thmA}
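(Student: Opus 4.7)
The plan is to argue by induction on $|G|$, combining the Navarro--Tiep strategy for the ordinary AWC reduction \cite{Nav1} with the Galois-equivariance machinery developed in \cite{Nav-GMckay} for the McKay--Navarro reduction, and the $\Aut(G)$-equivariance refinements of \cite{MRR23}. If $G$ itself is simple, the conclusion is immediate from Definition \ref{Def:ind-GAW}. Otherwise, I would pick a minimal normal subgroup $N$ of $G$ and split into cases according to the structure of $N$: (a) $N$ is a $p'$-group, (b) $N$ is an elementary abelian $p$-group, (c) $N$ is a direct product of isomorphic non-abelian simple groups of order divisible by $p$. In cases (a) and (b), a Fong--Reynolds/Clifford-type argument on the Brauer character side and a parallel weight-Clifford argument on the weight side reduce the problem to groups of order strictly less than $|G|$, where induction supplies the bijection; the key is to check that these standard reductions can be made simultaneously $\hH$- and $\Aut(G)$-equivariant, which is an elaboration of what is already done in the McKay--Navarro setting.

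The principal case is (c). Here, for each Brauer character $\vartheta\in\IBr(N)$, I would stratify $\IBr(G)$ and $\W(G)/\sim_G$ by the $G$-orbit of $\vartheta$ (weights are stratified by their Brauer pair labels via \cite{Nav1}). Clifford correspondence identifies the Brauer characters of $G$ over $\vartheta$ with $\IBr(G_\vartheta\mid\vartheta)$, and similarly on the weight side; both correspondences are naturally compatible with $\hH\times\Aut(G)_\vartheta$. Passing to a central cover $\tilde N$ of $N$, $\vartheta$ is lifted and extended projectively, and the inductive GAW condition applied to each simple factor $S$ of $N$ should deliver a bijection between $\IBr(S)$ and $\W(S)/\sim_S$ that is equivariant under the combined action of $\hH$ and $\Aut(S)$, together with cohomological data controlling how that bijection behaves under central extensions. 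Tensoring these bijections across the simple factors of $N$, then inducing up $G_\vartheta \to G$, assembles the local bijection into the desired global $\hH\times\Aut(G)$-equivariant bijection.

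The main obstacle I expect is the Galois-equivariant formulation of a character triple isomorphism strong enough to carry both the $\hH$-action and the $\Aut(G)$-action through the Clifford reduction and the central cover. In the automorphism-only setting of \cite{MRR23}, the relevant $2$-cocycle is preserved up to coboundary by $\Aut(G)$; here each $\sigma\in\hH$ will generally twist the cocycle, and the inductive condition in Definition \ref{Def:ind-GAW} must be set up so that the twisting on the Brauer character side matches the twisting on the weight side across the chosen bijection for every simple factor. Once this Galois-equivariant triple isomorphism is codified, verifying that it descends to blocks (so that the image lies in $\W(G)/\sim_G$) and that it commutes with induction should follow the patterns of \cite{Nav1,Nav-GMckay,MRR23} with mostly technical modifications; the entire argument then closes by induction on $|G|$.
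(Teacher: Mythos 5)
Your outline follows the Navarro--Tiep spirit, but two essential ingredients are missing, and one structural choice would make the induction break down. First, inducting on $|G|$ with a minimal normal subgroup $N$ and ``standard Clifford/Fong--Reynolds reductions'' does not close the argument: when the relevant character of $N$ (or of a $p'$-layer) is $G$-invariant, the stabilizer is all of $G$ and no passage to a strictly smaller group is available, nor can one simply pass to $G/N$ since Brauer characters over a nontrivial character of $N$ do not factor through $G/N$. The paper (following \cite{Nav1}) resolves this by proving a \emph{relative} statement: for $Z\zg G$ and a $\Phi$-invariant $\lambda\in\dz(Z)$, there is an equivariant surjection from the relative weight set $\rw(G,\lambda)$ onto $\IBr(G\which\lambda)$, and the induction runs on $|G/Z|$, so that in the invariant case one enlarges the normal subgroup (from $Z$ to $K$) rather than shrinking $G$; the reduction to the case where $Z$ is central, cyclic and of $p'$-order is achieved by the equivariant centralization of Brauer character triples (Theorem \ref{thm:centralize}). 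Your proposal never formulates this relative statement, and without it the cases you call (a) and (c) do not reduce.

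Second, your weight-side treatment (stratify, Clifford, tensor, induce) omits the Dade--Glauberman--Nagao step entirely. In the actual reduction one must pass from Brauer characters of $\N_U(E)$ over $\eta^{*_Q}$ with prescribed vertex order to defect-zero characters of $\N_U(F)/F$ over the DGN correspondent $(\eta^{*_Q})^{\star_F}$, and this requires an $\hH\times\Aut$-equivariant DGN correspondence for Brauer characters (Theorem \ref{Thm:DGN}, resting on \cite{Fu}); no Clifford-theoretic assembly replaces it. Finally, the ``main obstacle'' you identify --- making the projective/cocycle data compatible with the $\hH$-twisting on both sides --- is indeed the crux, but it is exactly the content of the Brauer $\hH$-triple ordering $\geqslant_c$ and $\geqslant_*$ and of Theorems \ref{thm:H-triA} and \ref{thm:H-triB} (condition (4) of Definition \ref{def:H-trip}, requiring $\mu_a=\mu_a'$); acknowledging that this must ``be codified'' leaves the central technical work of the reduction unproved, so as it stands the proposal is a plausible roadmap rather than a proof.
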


The proof of Theorem A refines the strategy of Navarro--Tiep \cite{Nav1}. At the time being, it seems that the language of character triples,  together with its ordering relation, is best suited in describing the inductive conditions. In order to consider Galois automorphisms, character triples are needed to be replaced by $\hH$-triples, which are introduced in Navarro--Sp\"ath--Vallejo \cite{Nav-GMckay}. 
It is well known that any (Brauer) character triple is isomorphic to one in which the normal subgroup is  central. However, incorporating Galois automorphisms into this framework presents significant difficulties, and this constitutes the main obstacle in the reduction theorem for the McKay--Navarro conjecture. In our reduction of the NAW conjecture, we encounter the same problem, which we overcome by restricting our attention to representations over the finite field \(F\), where the Schur indices of group algebras are always trivial.
We also need to consider the Galois automorphisms and group automorphisms on irreducible Brauer characters above the Dade--Glauberman--Nagao correspondence, accomplished by the second author in \cite{Fu}. 

Recently, Mart\'inez, Rizo and Rossi \cite{MRR23} proved that the inductive blockwise Alperin weight condition holds for all finite groups provided it holds for all simple groups; this result also establishes the group automorphism equivariance required in the above theorem.

Since there doesn't exists any  finite non-abelian simple group of order divisible by $p$ involved in a $p$-solvable finite group,  the following result is clear from Theorem~\ref{thm-reduction}. This provides a new proof of certain results of Turull \cite{Turull_The_strengthened_Alperin_Weight Conjecture_for.}.

\begin{coroA}
  Let $G$ be a $p$-solvable finite group. Then the NAW conjecture is true for $G$ at the prime $p$.
\end{coroA}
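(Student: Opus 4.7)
The plan is to deduce this corollary directly from Theorem~\ref{thm-reduction} by verifying that its hypothesis is vacuously satisfied when $G$ is $p$-solvable. Specifically, I would show that no finite non-abelian simple group of order divisible by $p$ can be involved in a $p$-solvable finite group $G$; once this is established, the collection of simple groups for which the inductive GAW condition would need to be checked is empty, and Theorem~\ref{thm-reduction} delivers the conclusion immediately.

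To prove the key structural claim, I would first invoke the standard stability properties of $p$-solvability: the class of $p$-solvable groups is closed under taking subgroups and under taking quotients. Thus for any $H\leq G$ with $G$ $p$-solvable and any $N\unlhd H$, the section $H/N$ is again $p$-solvable. It therefore suffices to show that a non-abelian simple group which is $p$-solvable must have order coprime to $p$.

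For this, I would argue from the definition of $p$-solvability: such a group admits a normal series in which each successive factor is either a $p$-group or a $p'$-group. If the group is simple, this series collapses and the group itself is forced to be a $p$-group or a $p'$-group. A simple $p$-group is cyclic of order $p$ and so abelian; consequently a non-abelian simple $p$-solvable group must be a $p'$-group, whose order is in particular coprime to $p$. Putting everything together, no non-abelian simple section of a $p$-solvable $G$ has order divisible by $p$, so the hypothesis of Theorem~\ref{thm-reduction} holds vacuously and the GAW conjecture follows for $G$. There is no substantive obstacle in this step; the genuine content lies entirely in the reduction theorem, and the corollary amounts to a purely structural observation about the composition data of $p$-solvable groups at the prime $p$.
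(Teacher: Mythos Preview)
Your proposal is correct and matches the paper's approach exactly: the paper simply notes that no finite non-abelian simple group of order divisible by $p$ is involved in a $p$-solvable group, so the hypothesis of Theorem~\ref{thm-reduction} is vacuous and the corollary follows. Your argument supplies the routine verification of this structural fact, which the paper leaves implicit.
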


In the original paper of Navarro--Tiep \cite{Nav1}, the inductive AW condition has been verified to hold for finite simple groups of Lie type at their defining characteristic, enhancing the understanding of these conjectures in this case and providing essential examples.
 For the same reason, we prove the following theorem.

\begin{thmA}\label{thm:def-char}
  The inductive NAW condition holds for every finite non-abelian simple group of Lie type  at their defining characteristic.
\end{thmA}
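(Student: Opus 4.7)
The plan is to adapt the defining-characteristic argument of Navarro and Tiep \cite{Nav1} for the ordinary inductive AW condition, augmenting it with the Galois-equivariance bookkeeping demanded by the $\hH$-triple formalism of \cite{Nav-GMckay}. Let $S$ be a finite non-abelian simple group of Lie type in characteristic $p$; write $S = \mathbf{G}^F/Z(\mathbf{G}^F)$ with $\mathbf{G}$ a simply connected simple algebraic group over $\overline{\mathbb{F}}_p$ and $F$ a Steinberg endomorphism, and set $X = \mathbf{G}^F$. Outside of a short list of small exceptional Schur covers (whose orders are prime to $p$), $X$ is the universal $p'$-covering group of $S$, and the exceptional cases are handled by the standard $\hH$-triple reduction.

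First I would recall the two parameterizations in play. Steinberg's restriction theorem yields a bijection $\lambda \mapsto \varphi_\lambda$ between the set of $F$-stable $p$-restricted dominant weights and $\IBr(X)$, where $\varphi_\lambda$ is the Brauer character of the restriction to $X$ of the simple $\overline{\mathbb{F}}_p\mathbf{G}$-module $L(\lambda)$. On the weight side, a theorem of Cabanes identifies each element of $\W(X)$ with a pair $(U_P, \mathrm{St}_{L_P}\otimes\widetilde{\theta})$, where $P$ is an $F$-stable parabolic with unipotent radical $U_P$ and Levi complement $L_P$, and $\widetilde{\theta}$ is the inflation to $P^F$ of a $p'$-degree character $\theta$ of $L_P^F$. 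Both $\IBr(X)$ and $\W(X)/\sim_X$ are then indexed by the same combinatorial data, and the bijection $\Omega$ is the natural matching; this already recovers the ordinary inductive AW condition established in \cite{Nav1}.

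Next I would verify that $\Omega$ is $(\hH \times \Aut(S))$-equivariant. The action of $\Aut(S)$ is handled exactly as in \cite{Nav1}: outer automorphisms decompose into diagonal, field, and graph automorphisms coming from the algebraic-group level, and each acts on $\IBr(X)$ and on $\W(X)/\sim_X$ by the same rule (permutation and Frobenius twist of the indexing weight). For $\hH$, the decisive observation is that every simple module $L(\lambda)$ is $\mathbb{F}_p$-rational, and the Steinberg factors on the weight side are likewise $\mathbb{F}_p$-rational; consequently $\sigma_p$ acts trivially on both sides, and the residual $\hH$-action through the $p'$-roots of unity appearing in matrix entries of $p$-regular elements agrees on each side because both sides are built from the same $\mathbf{G}$-modules.

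The main obstacle, and the core of the argument, is upgrading this bijection to the strong $\hH$-isomorphism of Brauer character triples
\[
(X \rtimes A_\varphi,\, X,\, \varphi) \;\so\; (X \rtimes A_{\Omega(\varphi)},\, X,\, \Omega(\varphi))
\]
for every $\varphi \in \IBr(X)$, where $A_\varphi$ denotes the stabilizer of $\varphi$ in $\Aut(X)$. The task is to produce $\hH$-equivariant projective extensions of $\varphi$ and $\Omega(\varphi)$ whose factor sets coincide. Here one uses the fact that the modules $L(\lambda)$ extend honestly to representations of the semidirect product of $\mathbf{G}$ with the group of field and graph automorphisms stabilising $\lambda$, so the factor set on the $\IBr$-side is trivial. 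On the weight side the same algebraic-group data extend $\mathrm{St}_{L_P}$, and one handles $\theta$ by induction on semisimple rank through the Dade--Glauberman--Nagao correspondence together with its $\hH$-equivariance as established in \cite{Fu}. The closing check is that the $\hH$-cocycles attached to the chosen extensions on both sides coincide; this follows from the explicit Frobenius description of the $\hH$-action on the fixed algebraic-group representations. The technical difficulty is concentrated in this last step: keeping the projective $\hH$-action aligned across the two triples under the inductive reduction in semisimple rank.
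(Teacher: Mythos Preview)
Your overall framework---parametrizing both $\IBr(X)$ and the weights by pairs $(P,\lambda)$ with $P$ a parabolic and $\lambda$ a linear character of its Levi---matches the paper's setup, which follows Curtis's theorem and the Borel--Tits description of radical $p$-subgroups exactly as in \cite{Nav1}. But two steps in your sketch are either wrong or substantially incomplete.

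\textbf{The $\hH$-equivariance argument is incorrect.} Your ``decisive observation'' that every $L(\lambda)$ is $\mathbb{F}_p$-rational and hence $\sigma_p$ acts trivially on $\IBr(X)$ is false in general. The Brauer character $\varphi_\lambda$ is computed by tracing matrices over $\overline{\mathbb{F}}_p$ on $p$-regular elements, and the resulting eigenvalues are genuine $p'$-roots of unity; $\sigma_p$ permutes them nontrivially. What the paper actually does is show that if $[V]\leftrightarrow(P,\lambda)$ via Curtis's bijection (determined by the unique $B$-invariant line $\mathbb{F}v$ and the character $\lambda$ by which $P$ acts on it), then $[V^\sigma]\leftrightarrow(P,\lambda^\sigma)$: one checks directly that $\mathbb{F}v^\sigma$ is the $B$-invariant line in $V^\sigma$ and that $P$ acts on it through $\lambda^\sigma$. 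On the weight side $(Q,\varphi)\leftrightarrow(P,\lambda)$ with $\varphi=\mathrm{St}_Y\cdot\lambda$, and since the Steinberg character is rational one gets $(Q,\varphi^\sigma)\leftrightarrow(P,\lambda^\sigma)$. Thus $\hH$ acts by the \emph{same} rule $\lambda\mapsto\lambda^\sigma$ on both sides---it does not act trivially.

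\textbf{The $\hH$-triple step is not as you describe.} Your plan to handle the linear character $\theta$ on the weight side ``by induction on semisimple rank through the Dade--Glauberman--Nagao correspondence'' does not appear in the paper and it is unclear how it would produce the required comparison of the functions $\mu_a$ and $\mu'_a$. The paper's argument is concrete and quite different: given $\eta\in\IBr(S)$ with faithful representation $\Theta:S\to\GL(n,\mathbb{F})$, one builds the groups $A$ and $\widetilde A$ explicitly as subgroups of $\GL(n,\mathbb{F})\rtimes\hH$ (collecting all $\alpha\Theta_{\sigma x}$ of determinant~$1$, where $\Theta_{\sigma x}$ conjugates $\Theta$ to $\Theta^{\sigma x}$). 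The extension $\tilde\eta$ is then afforded by the inclusion $A\hookrightarrow\GL(n,\mathbb{F})$, while $\tilde\varphi$ is built from Feit's extension of the Steinberg character together with the extension $\tilde\lambda$ of $\lambda$ coming from the $\N_A(Q)$-invariance of the line $\mathbb{F}v$. The crucial equality $\mu_a=\mu'_a$ is then verified by comparing eigenvalues: if $\tilde\eta^a=\mu_a\tilde\eta$ via conjugation by some matrix $M$, then the $B$-invariant line gives $\tilde\lambda^{\sigma a}=\mu_a\tilde\lambda$ on one side and $\tilde\lambda^{\sigma a}=\mu'_a\tilde\lambda$ on the other. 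This eigenvector bookkeeping is the heart of the defining-characteristic proof, and your proposal does not contain it.
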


As a consequence, we  prove the following result.

  \begin{thmA}
Suppose that $G$ has an abelian Sylow 2-subgroup. Then the NAW conjecture is true for $G$ at every prime.
  \end{thmA}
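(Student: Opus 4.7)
The natural approach is to invoke Theorem~\ref{thm-reduction}. Since every non-abelian simple section of $G$ inherits the abelianness of a Sylow $2$-subgroup, Walter's classification of non-abelian simple groups with abelian Sylow $2$-subgroups restricts the possible non-abelian simple sections of $G$ to the following list: $\mathrm{PSL}_2(2^n)$ with $n\geqslant 2$; $\mathrm{PSL}_2(q)$ with $q>3$ odd and $q\equiv\pm 3\pmod 8$; the Ree groups $^2G_2(3^{2n+1})$ with $n\geqslant 1$; and the first Janko group $J_1$. It therefore suffices to verify the condition of Definition~\ref{Def:ind-GAW} for each such simple group $S$ at each prime $p$ dividing $|S|$.

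\textbf{Case analysis.} Whenever $p$ equals the defining characteristic of $S$ (viewed as a group of Lie type), Theorem~\ref{thm:def-char} applies directly. The remaining cases are $\mathrm{PSL}_2(q)$ at primes $p$ coprime to $q$, $^2G_2(3^{2n+1})$ at primes $p\neq 3$, and $J_1$ at the primes $p\in\{2,3,5,7,11,19\}$. In each of these remaining cases the Sylow $p$-subgroup of $S$ is abelian: cyclic except when $p=2$, where it is either Klein four (for $\mathrm{PSL}_2(q)$ with $q$ odd) or elementary abelian of order $8$ (for $^2G_2(q)$ and $J_1$). I would exploit the explicit knowledge of $\IBr(S)$, of the radical $p$-subgroups of $S$, and of their normalizers $\N_S(Q)$, together with the fact that the inductive AW condition (the variant with only group-automorphism equivariance) for these groups is established in the existing literature on the reduction of the Alperin weight conjecture. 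The remaining work is to upgrade the known bijections to be $\hH\times\Aut(S)$-equivariant and to satisfy the $\hH$-triple compatibility demanded in Definition~\ref{Def:ind-GAW}; since the fields of values of the relevant Brauer characters and of the characters of $\N_S(Q)/Q$ are small and explicit, this verification is finite and structural.

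\textbf{Main obstacle.} The subtlest point is confirming that the central $p'$-isomorphism of $\hH$-triples required by the inductive GAW condition can be chosen simultaneously $\hH$- and $\Aut(S)$-equivariantly. This forces one to track the combined action of group and Galois automorphisms on Brauer characters and on weight data in situations where field or diagonal outer automorphisms of $S$ interact non-trivially with the Galois action---most notably for $\mathrm{PSL}_2(p^f)$ with $f>1$ and for the Ree groups. Once this compatibility is established by explicit case analysis, Theorem~\ref{thm-reduction} yields the stated conclusion.
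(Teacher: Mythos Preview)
Your plan is correct and follows essentially the same route as the paper: reduce via Theorem~\ref{thm-reduction}, pass the abelian Sylow $2$-subgroup hypothesis to the non-abelian simple sections, invoke Walter's classification, handle the defining-characteristic primes by Theorem~\ref{thm:def-char}, and then verify the inductive GAW condition case by case for the remaining primes. What you label the ``main obstacle'' --- upgrading the known $\Aut(S)$-equivariant bijections and character-triple relations to full $\hH\times\Aut(S)$-equivariance for $\mathrm{PSL}_2(q)$, ${}^2G_2(3^{2n+1})$, and $J_1$ --- is precisely the content of the paper's Propositions~\ref{lem:Janko}, \ref{prop:sl2}, and~\ref{prop:ree}, and indeed constitutes the bulk of the work.
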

  
Though certain simple groups are proved to satisfy the inductive NAW condition in this paper, the verification of the inductive NAW condition for finite simple groups remains a serious challenge since it requires a vast knowledge of the character values of simple groups which is still not completely comprehended.
  
 In Section \ref{sec_Preli} we introduce some basic notation and facts. Then in Section \ref{sec_centralize},
  we enhance the well-known fact that any Brauer character triple can always be reduced to one with a central $p'$-subgroup by considering Galois automorphisms and group automorphisms. Afterwards in Section \ref{sec_DGN-cor}, we give  an equivariant bijection of irreducible Brauer characters above the Dade--Glauberman--Nagao correspondence. 
  We introduce the ordering relation of Brauer $\hH$-triples and related properties in Sections \ref{sec_Character_pairs} and \ref{sec_new-H-tri}, and define the inductive NAW condition for simple groups  in Section \ref{sec_ind_GAW}.  These preparations lead to the final reduction of the NAW conjecture in Section \ref{Sec_red}. In Section \ref{sec_defining_char}, we consider groups of Lie type at their defining characteristic and prove Theorem C, while we prove Theorem D in Section \ref{sec_abelian2}.


\section{Preliminaries}\label{sec_Preli}
Throughout this paper, $p$ is a fixed prime, and the $p$-modular system $(\mathcal{K},\mathcal{O},F)$ together with the Galois group $\hH$ are as described in Section \ref{section-intro}.
All groups considered are finite; all modules are right modules and are of finite dimension.
For ordinary and Brauer characters our notation follows \cite{Nagao}.

Let $\mathscr{A}$ be a group acting (from the right) on the sets $X_1$ and $X_2$. Then a map $f:X_1\ra X_2$ is called $\mathscr{A}$-equivariant if $f(x)^a=f(x^a)$ for any $x\in X_1$ and $a\in \mathscr{A}.$

Let $G$ be a finite group, we denote by $G_{p'}$ the set of $p$-regular elements of $G.$ 
An ordinary character $\chi\in\Irr(G)$ is said to have ($p$-)defect zero if $\chi(1)_p=|G|_p.$ We write $\dz(G)$ for the set of irreducible defect zero characters of $G.$ If $\chi\in\dz(G),$ then $\chi^{\circ}:=\chi_{G_{p'}}$ is an irreducible Brauer character of $G.$ 
In fact, the map $\chi \mapsto \chi^{\circ}$ induces a bijection between the set of irreducible defect zero characters of $G$ and the set of irreducible Brauer characters of $G$ whose corresponding irreducible $FG$-modules have trivial vertices (for the definition of vertices of indecomposable $FG$-modules, see \cite[Chapter IV, Section 3]{Nagao}).

Let $N$ be a subgroup of  $G.$ Let $\chi\in\Irr(G)\cup\IBr(G)$ and $\theta\in\Irr(N)\cup\IBr(N).$ Then we denote by $\chi_N$ the restriction of $\chi$ to $N,$ and $\theta^G$ the induction of $\theta$ to $G.$
We denote by $\IBr(G \which \theta)$ and $\Irr(G \which \theta)$ the set of irreducible Brauer and ordinary  characters of $G$ lying over $\theta$, respectively. 
Assume that $\theta\in\IBr(N)$ and  $m$ is a positive integer that is a power of $p$.
We denote by $\IBr(G \which\theta, m)$ (resp. $\IBr(G \which m)$)  the subset of $\IBr(G \which \theta)$ (resp. $\IBr(G)$) consisting of characters whose corresponding modules have vertices of order $m$.

Let $\Aut(G)$ be the automorphism group of $G,$  and like the group $\hH$ of Galois automorphisms,  we require that  elements in $\Aut(G)$ are composed from the left.
For $x\in \mathcal{O} \cup F,g\in G,\sigma\in\hH,\phi\in\Aut(G)$,  we write $x^{\sigma}$ and $g^{\phi}$ for the images of $x$ and $g$ under the map $\sigma$ and $\phi,$ respectively.
Let $a=(\sigma,\phi)\in\hH\times\Aut(G).$ 
If $H$ is a subgroup of $G$ and $\chi\in\IBr(H)\cup\Irr(H)$, then we define $H^a=H^{\phi}$ and $\chi^a\in\IBr(H^a)\cup\Irr(H^a)$
such that $\chi^a(x)=\chi(x^{\phi^{-1}})^{\sigma}.$
For $g\in G,$ we define $\chi^g$ to be the character of $H^g$ such that $\chi^g(x)=
\chi(gxg^{-1}).$
We can similarly define the action of $\hH\times\Aut(G)$ on group representations, as will be described in Section \ref{sec_Character_pairs}.  Actions on modules will be discussed in Section \ref{sec_centralize}.  

A $p$\textbf{-radical} subgroup of $G$ is a $p$-subgroup $Q$ of $G$ such that $\bO_p\big(\N_G(Q)\big)=Q$ \cite{Nav1}, where $\bO_p\big(\N_G(Q)\big)$ is the maximal normal $p$-subgroup of $\N_G(Q)$. We denote by $\Rad(G)$ the set of $p$-radical subgroups of $G$.
It's known that if  $\dz(G)$ is nonempty, then $\bO_p(G)=1$ (see \cite[Section 2]{Nav1}). 
Thus if $(Q,\delta)$ is a $p$-weight of $G$, then $Q\in\Rad(G).$

\begin{lem}\label{thm_6.02}
  Let $Z$ be a normal subgroup of a finite group $G.$ Then we have that
  \begin{enumerate}
    \item If $Z$ is a central $p'$-subgroup of $G,$ then there is a natural bijection $$\Rad(G)\ra\Rad(G/Z), S\mapsto SZ/Z.$$ 
    \item Let $Z$ be a normal $p$-subgroup of $G,$  and denote by $\bar{H}=HZ/Z$ for any subgroup $H$ of $G.$ 
        Then $Z$ is contained in any $p$-radical subgroup of $G$ and there exists a natural bijection $\Rad(G)\ra\Rad(\bar{G}),S\mapsto\bar{S}.$ 
        For any $\varphi\in\IBr(G),$  let $\bar{\varphi}\in\IBr(\bar{G})$ be the deflation of $\varphi$. If $Q$ is a vertex of $\varphi,$ then $Z\leqslant Q$ and $\bar{Q}$ is a vertex of $\bar{\varphi}.$ 
   \end{enumerate}
\end{lem}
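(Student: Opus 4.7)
The plan is to treat parts (1) and (2) as essentially normalizer/$\bO_p$ computations, with the Brauer-character ingredient in (2) handled via the fact that a normal $p$-subgroup acts trivially on every simple module.

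For (1), since $Z$ is a central $p'$-subgroup, every $p$-subgroup $S$ of $G$ satisfies $S\cap Z=1$, so $|SZ/Z|=|S|$ and $S\mapsto SZ/Z$ is a bijection between $p$-subgroups of $G$ and of $G/Z$ (surjectivity follows by lifting any $p$-subgroup of $G/Z$ to a Sylow $p$-subgroup of its preimage, which is unique since $Z$ is central). Centrality of $Z$ also gives $Z\le\N_G(S)$ and $\N_{G/Z}(SZ/Z)=\N_G(S)/Z$; since $Z$ is a normal $p'$-subgroup of $\N_G(S)$, the standard identity $\bO_p(\N_G(S)/Z)=\bO_p(\N_G(S))Z/Z$ shows that $SZ/Z$ is radical in $G/Z$ if and only if $\bO_p(\N_G(S))=S$, i.e.\ $S$ is radical in $G$.

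For (2), given $S\in\Rad(G)$, set $T:=SZ$, which is a $p$-subgroup because $Z$ is a normal $p$-subgroup. Each element of $\N_G(S)$ normalizes both $S$ and the globally normal subgroup $Z$, hence normalizes $T$; thus $T\unlhd\N_G(S)$, forcing $T\le\bO_p(\N_G(S))=S$ and so $Z\le S$. The map $S\mapsto\bar S$ is therefore well defined and clearly injective (since $Z\le S$ recovers $S$ from $\bar S$ by preimage). Transferring radicality relies on $\N_{\bar G}(\bar S)=\N_G(S)/Z$ (valid whenever $Z\le S$ is normal in $G$) together with $\bO_p(\N_G(S)/Z)=\bO_p(\N_G(S))/Z$ (valid because $Z$ is a normal $p$-subgroup of $\N_G(S)$ contained in $\bO_p(\N_G(S))$), and surjectivity onto $\Rad(\bar G)$ is then obtained by taking preimages and rerunning these identifications.

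For the Brauer-character statement, let $M$ be the simple $FG$-module affording $\varphi$. Since $Z$ is a normal $p$-subgroup, $J(FZ)M$ is an $FG$-submodule (normality makes $J(FZ)FG$ a two-sided ideal), and by Nakayama and simplicity it vanishes, so $Z$ acts trivially on $M$ and $M$ descends to the simple $F\bar G$-module $\bar M$ affording $\bar\varphi$. If $Q$ is a vertex of $M$, then $M$ is a direct summand of $\Ind_Q^G L$ for some indecomposable $FQ$-module $L$; applying the Mackey formula to $\Res^G_Z\Ind_Q^G L$ and using that $\Res^G_Z M$ is trivial forces $Z\le Q$ (this is the standard fact that the vertex of a simple module contains $\bO_p(G)$). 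Once $Z\le Q$, deflation by $Z$ converts $Q$-projectivity of $M$ into $\bar Q$-projectivity of $\bar M$, and the corresponding minimality statement transfers as well, giving that $\bar Q$ is a vertex of $\bar M=\bar\varphi$. The main technical point is this last vertex assertion: while all normalizer/$\bO_p$ manipulations are routine, matching vertices across the quotient requires the containment of $\bO_p(G)$ in every vertex of a simple module together with the compatibility of relative projectivity under deflation by a trivially-acting normal $p$-subgroup.
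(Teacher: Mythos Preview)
Your proof is correct. The paper does not give an independent argument here: it simply cites \cite[Lemma~2.3]{Nav1} for part~(1) and the first half of part~(2), and \cite[Chapter~IV, Theorem~2.2(2), Lemma~3.4(ii)]{Nagao} (together with the observation that a normal $p$-subgroup acts trivially on every simple $FG$-module) for the vertex assertion. Your normalizer/$\bO_p$ computations and the Mackey argument showing $Z\le Q$ are exactly the standard arguments underlying those references, so you have effectively unpacked the citations rather than taken a different route.
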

\begin{proof}
  The condition (1) and the first part of the condition (2) follow from \cite[Lemma 2.3]{Nav1}. The second part of Condition (2) follows from \cite[Chapter IV, Theorem 2.2(2), Lemma 3.4(ii)]{Nagao}, noticing that $Z$ acts trivially on any irreducible $FG$-module.
\end{proof}

Let $Q\in\Rad(G).$ 
By the last statement of the above lemma, there is a natural bijection 
\begin{equation}\label{equ:dz-bij-vert.}
  \dz(\N_G(Q)/Q)\mapsto \IBr(\N_G(Q)\which |Q|),\ \theta\mapsto \widetilde{\theta^{\circ}},
\end{equation}
where $\widetilde{\theta^{\circ}}$ is the inflation of $\theta^{\circ}$ to $\N_G(Q).$
Through this bijection, the set $\W(G)$ of weights of $G$ can be identified with the set 
\begin{equation}\label{equ:wights}
  \W^{\circ}(G)=\big\{(Q,\delta)\bwhich Q\in\Rad(G),\delta\in\IBr(\N_G(Q)\which |Q|)\big\}.
\end{equation}

The group $\hH \times \Aut(G)$ acts on the set $\W^{\circ}(G)$ by $(Q,\delta)^a := (Q^a, \delta^a)$ for all $(Q,\delta) \in \W^{\circ}(G)$ and $a \in \hH \times \Aut(G)$. This induces an action of $\hH \times \Aut(G)$ on the set $\W^{\circ}(G)/\sim_G$ of $G$-orbits of $\W^{\circ}(G)$.
The NAW conjecture in fact claims that there is an $\hH \times \Aut(G)$-equivariant bijection from $\IBr(G)$ to $\W^{\circ}(G)/\sim_G$.


\section{Modular character triples with defect zero characters}\label{sec_centralize}
In this section, we prove that any modular character triple with a defect zero character  is isomorphic to a triple with a \(p'\)-central subgroup which respects the action of Galois automorphisms, group automorphisms, and vertices (see Theorem \ref{thm:centralize}).

Let $G$ be a finite group. Now we want to define an $\hH\times\Aut(G)$-action on the set of isomorphism classes of $FG$-modules, but before this we introduce some more general notation. Let $A$ be an $F$-algebra and $V$ be a (right) $A$-module. Suppose that $a\in\Aut(A)$ is an automorphism of $A$ as a ring. We define $V^a$  to be the $A$-module such that
\begin{enumerate}
  \item As an abelian group $V^a$ is isomorphic to $V.$ We  write $V^a=\{v^a\which v\in V\},$ that is, the map $V\ra V^a,v\mapsto v^a$ is an isomorphism of abelian groups.
  \item For any $v\in V$ and $x\in A,$ let $v^a\cdot x^a=(vx)^a.$
\end{enumerate} 
Then $V^a$ is another $A$-module which may not be isomorphic to $V$ as $A$-modules. 
Now let $A=FG$ be a group algebra, and let $a=(\sigma,\phi)\in\hH\times \Aut(G)$. Then $a$ can be regarded as a ring automorphism of $FG$  that is defined as $a:FG\ra FG,\sum_{g\in G}k_gg\mapsto \sum_{g\in G}k_g^{\sigma}g^{\phi},$ where $k_g\in F.$ 
Thus we  define an $\hH\times\Aut(G)$-action on the set of isomorphism classes of $FG$-modules. 

\begin{lem}
Let $G$ be a finite group and $a=(\sigma,\phi)\in\hH\times\Aut(G).$ Suppose that $V$ is an  $FG$-module that affords the Brauer character $\varphi,$ then $V^a$ affords the Brauer character $\varphi^a.$ 
\end{lem}
\begin{proof}
Choose $\{v_1,\cdots,v_s\}$ to be an $F$-basis of $V.$ 
  Let $X:G\ra\GL_s(F)$ be the matrix representation of $V$ with respect to the basis $\{v_1,\cdots,v_s\},$ that is, $\forall g\in G,$ $X(g)$ is defined by 
  $$\begin{pmatrix}
   v_1 \\
  \vdots \\
   v_s                                 
  \end{pmatrix}g=
  \begin{pmatrix}
   v_1 g\\
  \vdots \\
   v_s g                                
  \end{pmatrix}=
  X(g)\begin{pmatrix}
   v_1 \\
  \vdots \\
   v_s                                
  \end{pmatrix}.$$
Note that $(kv)^a=k^{\sigma}v^a$ for any $k\in F$ and $v\in V.$  
Since every element $v\in V$ can be uniquely expressed in the form $v=k_1v_1+\cdots+k_sv_s,$ where $k_1,\cdots,k_s\in F,$ we have that $v^a=k_1^{\sigma}v_1^a+\cdots+k_s^{\sigma}v_s^a,$ and this expression is unique. Thus $\{v_1^{a},\cdots, v_s^a\}$ is an $F$-basis of $V^a.$ As
$$
\begin{pmatrix}
    v_1^a \\
    \vdots \\
    v_s^a 
\end{pmatrix}\cdot g=
\begin{pmatrix}
   v_1^a\cdot  g\\
  \vdots \\
   v_s^a \cdot g                                
  \end{pmatrix}=
  \begin{pmatrix}
   v_1^a\cdot  (g^{\phi^{-1}})^a\\
  \vdots \\
   v_s^a \cdot (g^{\phi^{-1}})^a                                
  \end{pmatrix}=
  \begin{pmatrix}
  ( v_1\cdot  g^{\phi^{-1}})^a\\
  \vdots \\
   (v_s\cdot g^{\phi^{-1}})^a                                
  \end{pmatrix}=
     X(g^{\phi^{-1}})^{\sigma}\begin{pmatrix}
   v_1^a \\
  \vdots \\
   v_s^a                                
  \end{pmatrix},
$$ we see that the matrix representation of the $FG$-module $V^a$ with respect to the basis $\{v_1^{a},\cdots, v_s^a\}$ is $Y:G\ra \GL_s(F),g\mapsto X(g^{\phi^{-1}})^{\sigma},$ which certainly affords the Brauer character $\varphi^a.$
\end{proof}

The following theorem is a strengthened version of \cite[Chapter V, Theorem 7.4]{Nagao}, incorporating Galois automorphisms and group automorphisms at its foundation.
Let $G$ be a finite group and $\varphi\in\IBr(G).$ Let $\mathbb{F}_p$ be the prime subfield of $F$, which is the smallest subfield of $F.$ We denote by $\mathbb{F}_p(\varphi)$ the subfield of $F$ that is an extension of $\mathbb{F}_p$ by adjoining the elements $\{\varphi(g)^{\ast} \which g\in G_{p'}\},$ where $\varphi(g)^{\ast}$ is the image of $\varphi(g)$ in $F$ under the natural surjection. For a subgroup $N$ of $G$, we denote by $\mathcal{S}(G,N)$ the set of subgroups of $G$ containing $N.$

\begin{thm}\label{thm:centralize}
  Let $Z$ be a normal subgroup of a finite group $G$. 
  Let $\hat\lambda\in\dz(Z)$, and let $\Inn(G)\leqslant \mathscr{A}\leqslant \hH\times\Aut(G)_Z.$ Assume that $\lambda:=\hat\lambda^{\circ}$ is $\mathscr{A}$-invariant. 
  Then there exist a finite group $G_1$, a $p'$-cyclic central subgroup $Z_1\leqslant G_1,$ a linear faithful character $\lambda_1\in\IBr(Z_1),$ and a group homomorphism $\omega:\mathscr{A}\ra \hH\times \Aut(G_1)_{Z_1}$ such that $\lambda_1$ is $\omega(\mathscr{A})$-invariant and  the following conditions hold:
  \begin{enumerate}
    \item There is a group isomorphism $\epsilon:G/Z\cong G_1/Z_1$.  Via this isomorphism, we identify $G/Z$ with $G_1/Z_1.$ Moreover, we have $|Z_1|= |\mathbb{F}_p(\lambda)^{\times}|.$
    \item \(
\omega(\operatorname{Inn}(G)) = \operatorname{Inn}(G_1).
\)
Moreover, for \( a = (\sigma, \phi) \in \mathscr{A} \) and \( \omega(a) = (\sigma', \phi') \), we have 
\begin{enumerate}
    \item \(\sigma = \sigma'\), and
    \item \(\phi = \phi'\) when they are viewed as automorphisms of \( G/Z = G_1/Z_1 \).
\end{enumerate}
    \item      For any $J\in\mathcal{S}(G,Z)$,  there is a bijection 
    $\nu_J:\IBr(J\which\lambda)\ra \IBr(J_1\which\lambda_1),$ where $J_1/Z_1=\epsilon(J/Z)$,  such that the following conditions hold:
    \begin{enumerate}
     \item For any $\varphi\in\IBr(J\which \lambda)$ and $a\in \mathscr{A} ,$ we have $\nu_J(\varphi)^{\omega(a)}=\nu_{J^a}(\varphi^a).$
     \item For any $H\in\mathcal{S}(J,N),$ we have $\nu_J(\varphi)_{H_1}=\nu_H(\varphi_H).$ Here, we need to span $\nu_H$ $\mathbb{N}$-linearly, where $\mathbb{N}$ denotes the set of non-negative integers, as $\varphi_H$ may be reducible.
     \item Let $\varphi\in\IBr(J\which\lambda)$ and $Y$ be a vertex of $\varphi.$ Then there exits a vertex $Y_t$ of $\nu_J(\varphi)$ such that $Y_tZ_1/Z_1=\epsilon(YZ/Z).$
    \end{enumerate}
  \end{enumerate}
\end{thm}
\begin{proof}
Let \( e_{\lambda} \) be the block idempotent associated with \( \lambda \), and set \( A = e_{\lambda}FG \) and \( A_1 = e_{\lambda}FZ \).  
Note that \( A \) is an \( F \)-algebra with identity element \( e_{\lambda} \). If \( x \) is an invertible element in \( A \), we denote by \( x' \) the inverse of \( x \) in \( A \); that is, \( x' \in A \) satisfies \( xx' = x'x = e_{\lambda} \). 
Note that \( A_1 \) is a subalgebra of \( A \) containing the identity.
 Let $E$ be the subfield $\mathbb{F}_p(\lambda)$ of $F.$ Then \( EG \) can be viewed as an \( E \)-subalgebra of \( FG \).  
By \cite[Chapter III, Theorem 2.22]{Nagao} and Equation (6.1) on page 231 of \cite{Nagao}, we have \( e_{\lambda} \in EZ \).  
Set \( B = e_{\lambda}EG \) and \( B_1 = e_{\lambda}EZ \).  
Then \( B \) and \( B_1 \) are \( E \)-subalgebras of \( A \) and \( A_1 \), respectively. Moreover,
$A = F \otimes_E B$ and $A_1 = F \otimes_E B_1.$

  Since $A_1$ is a full matrix algebra over the field $F,$ for any $g\in G,$ there is an element $a_g\in A_1^{\times}$ such that $a^g=a_g'aa_g$ for any $a\in A_1.$ 
  In fact, we can choose $a_g\in B_1^{\times},$ as $B_1$ is a full matrix algebra over the field $E$ and $A_1=F\otimes_E B_1.$
  Let $\bar{G}=G/Z.$ 
  For any $\bar{g}\in\bar{G},$ choose $g$ to be an inverse image of $\bar{g}$ in $G$ under the canonical surjection (let $g=1$ if $\bar{g}=1$), and choose $a_g$ be as above (let $g_1=e_{\lambda}$), and let $u_{\bar{g}}=a_g'g.$
  Note that $u_{\bar{g}}\in B^{\times}$ for any $\bar{g}\in\bar{G}.$
  By \cite[Chapter V, Theorem 7.2]{Nagao}, $\C_A(A_1)$ (resp. $\C_B(B_1)$)  is a twisted group algebra with an $F$-basis (resp. an $E$-basis) $\{u_{\bar{g}}\which \bar{g}\in\bar{G}\}$ and the multiplication induces an isomorphism $A_1\otimes \C_A(A_1)\ra A$ (resp. $B_1\otimes_E \C_B(B_1)\ra B$).
  The tensor product is taken over the field $F$ if there is no script. We identify $A$ with $A_1\otimes \C_A(A_1)$ through this isomorphism. Let $\alpha\in\ZZ^2(\bar{G},F^{\times})$ be the factor set such that $u_{\bar{g}}u_{\bar{h}}=\alpha(\bar{g,}\bar{h})
  u_{\bar{g}\bar{h}}.$ Then we have  $\alpha(\bar{g},\bar{h})\in E^{\times}$ for any $\bar{g},\bar{h}\in\bar{G}.$
  Since $e_{\lambda}$ is $ \mathscr{A} $-invariant, we have that $ \mathscr{A} $ acts on $A$ and $A_1,$ and hence on $\C_A(A_1).$
  Let $G_1=\{ku_{\bar{g}}\which k\in E^{\times}, \bar{g}\in\bar{G}\}.$ 
  We want to prove that $G_1$ is an $ \mathscr{A} $-invariant finite subgroup of $\C_A(A_1)^{\times}$ and has order $|E^{\times}||\bar{G}|$. We only need to show  that $G_1$ is $ \mathscr{A} $-invariant. Notice that $\C_B(B_1)$ is $ \mathscr{A} $-invariant and $G_1$ is exactly the invertible elements in $\C_B(B_1)$ that has the form $ku_{\bar{g}}$ for some $k\in E^{\times}$ and $\bar{g}\in\bar{G},$ we are done.
 Let $Z_1=\{ku_1\which k\in E^{\times}\},$ then $Z_1$ is a $p'$-cyclic central subgroup of $G_1$ and has order $|E^{\times}|.$ 
 We  identify $G_1/Z_1$ with $\bar{G}$ naturally, and  denote this isomorphism by $\epsilon:G/Z\ra G_1/Z_1.$
 Define a faithful representation $\lambda_1:Z_1\ra F^{\times},ku_1\mapsto k$ and let  $e_{\lambda_1}$ be the block idempotent of $FZ_1$ associated with $\lambda_1.$ 
 Note that $$e_{\lambda_1}=\frac{1}{|Z_1|}\sum_{z\in Z_1}\lambda_1(z)^{-1}z.$$ It is not difficult to prove that there is an isomorphism $e_{\lambda_1}FG_1\ra \C_A(A_1)$ of $F$-algebras induced by the inclusion map $e_{\lambda_1}g\mapsto g,$ where $g\in G_1.$
  We identify $\C_A(A_1)$ with $e_{\lambda_1}FG_1$ via this isomorphism.
  
  Let $W$ be the unique simple $A_1$-module. By \cite[Chapter V, Lemma 7.3]{Nagao}, the map $V\mapsto W\otimes V$ defines a bijection from the set of isomorphism classes of irreducible $e_{\lambda_1}FG_1$-modules to the set of isomorphism classes of irreducible $A$-modules. 
  Note that $\mathscr{ A} $ acts on the group $G_1$ as group automorphisms. To avoid confusion, for $a\in \mathscr{A} ,$ we  write $\widetilde{a}\in\Aut(G_1)$ for this group automorphism. 
  Fix an $a=(\sigma,\phi)\in \mathscr{A}.$ 
  As $Z_1=\{ku_1\which k\in E^{\times}\}$ and $(ku_1)^{\widetilde{a}}=k^{\sigma}u_1,$  it is routinely to check that $e_{\lambda_1}^{(\sigma, \widetilde{a})}=e_{\lambda_1}.$ 
  For a simple $e_{\lambda_1}FG_1$-module $V$, we need to prove that $W\otimes V^{(\sigma,\widetilde{a})}$ is isomorphic to $(W\otimes V)^a$ as $A$-modules. 
  Note that $e_{\lambda_1}FG_1$ is naturally isomorphic to $\C_A(A_1)$ and the $(\sigma,\widetilde{a})$-action on $e_{\lambda_1}FG_1$ corresponds to the $a$-action on $\C_A(A_1)$ under this  isomorphism. Consequently, we may regard $V$ as a $\C_A(A_1)$-module, and $V^{(\sigma,\widetilde{a})}$ is precisely the $\C_A(A_1)$-module $V^a.$ 
  We are going to construct an isomorphism $\widetilde{f}:W\otimes V^a\ra (W\otimes V)^a$ of $A$-modules. 
  First,  since there is a unique simple \( A_1 \)-module up to isomorphisms, there exists an isomorphism of abelian groups \( f: W \to W \) such that the map \( W \to W^a,\ w \mapsto f(w)^a \) defines an isomorphism of \( A_1 \)-modules. One can verify that \( f(w \cdot x) = f(w) \cdot {}^a x \) for all \( w \in W \) and \( x \in A_1 \), where \( {}^a x = x^{a^{-1}} \).
Second, we define
\[
\widetilde{f}: W \otimes V^a \longrightarrow (W \otimes V)^a,\quad w \otimes v^a \longmapsto (f(w) \otimes v)^a,
\]
with \( w \in W \) and \( v \in V \). We now prove that \( \widetilde{f} \) is an isomorphism of \( A \)-modules. 
By the universal property of the tensor product, one can show that $\widetilde{f}$ is a well-defined isomorphism of abelian groups.
 For any $x\in A_1$ and $y\in\C_A(A_1),$ we have 
  \begin{align*}
    \widetilde{f}(w\otimes v^a)(x\otimes y)&=(f(w)\otimes v)^a\cdot(x\otimes y)  \\
     &=(f(w)\cdot \zs{a}x\otimes v\cdot\zs{a}y)^a \\
     &=(f(wx)\otimes v\cdot\zs{a}y)^a \\
     &=\widetilde{f}(wx\otimes v^{a}\cdot y) \\
     &=\widetilde{f}((w\otimes v^{a})(x\otimes y)),
  \end{align*}
  thus proving that $\widetilde{f}$ is an isomorphism of $A$-modules. We define $\omega: \mathscr{A} \ra \hH\times \Aut(G_1),a=(\sigma,\phi)\mapsto (\sigma,\widetilde{a})$ and $\nu_G:\IBr(G\which\lambda)\ra\IBr(G_1\which\lambda_1)$ such that if the simple $e_{\lambda_1}FG_1$-module $V$ affords the irreducible Brauer character $\nu_G(\varphi),$ then the simple $e_{\lambda}FG$-module $W\otimes V$ affords the irreducible Brauer character $\varphi.$ We have showed  that $\nu_G(\varphi)^{\omega(a)}=\nu_G(\varphi^a).$ 
  
  Let $Y_t$ be a vertex of $V$. We aim to show that there exists a vertex $Y$ of the $G$-module $W\otimes V$ such that
\(
YZ/Z = Y_t Z_1 / Z_1.
\)
Let $\E_k(V)$ denote the endomorphism ring of $V$. Then $\E_k(V)$ can be viewed as the simple quotient of $e_{\lambda_1} F G_1$ corresponding to $V$. The group homomorphism $G_1 \to \E_k(V)^{\times}$ induced by $V$ gives rise to the canonical surjection
\[
p_V : e_{\lambda_1} F G_1 \ra \E_k(V).
\]
Since $e_{\lambda} F G = A_1 \otimes e_{\lambda_1} F G_1$, it follows that $A_1 \otimes \E_k(V)$ is a simple quotient of $e_{\lambda} F G$, which naturally corresponds to the simple $G$-module $W \otimes V$.
As $A_1$ is the endomorphism ring of $W$, we may regard $A_1 \otimes \E_k(V)$ as the endomorphism ring of $W \otimes V$. The corresponding group homomorphism $G \to (A_1 \otimes \E_k(V))^{\times}$ then induces precisely the algebra surjection
\[
\operatorname{id}_{A_1} \otimes p_V : e_{\lambda} F G = A_1 \otimes e_{\lambda_1} F G_1 \ra A_1 \otimes \E_k(V).
\]

Let $g \in G$ and $a_g \in A_1^\times$ be such that $a^g = a_g' a a_g$ for all $a \in A_1$.  
Then $a_g' g = k u_{\bar g}$ for some $k \in E^\times$, and hence can be identified with an element $\tilde{g} \in G_1$ satisfying $\epsilon(gZ) = \tilde{g} Z_1$.  
Consequently, under the canonical isomorphism $e_\lambda FG = A_1 \otimes e_{\lambda_1} F G_1$, we have $e_\lambda g = a_g \otimes e_{\lambda_1} \tilde{g}$.
Since $e_\lambda h = e_\lambda h \otimes 1$ for any $h \in Z$, it follows from \cite[Chapter II, Lemma 4.1(i)]{Nagao} that $(A_1 \otimes \E_k(V))^Z = 1 \otimes \E_k(V)$.  
Moreover, $\E_k(V) = \E_k(V)^{Z_1}$ because $Z_1$ is a central subgroup of $G_1$.  
Thus we obtain an isomorphism
\[
\mathfrak{f}_Z : \E_k(V)^{Z_1} \ra (A_1 \otimes \E_k(V))^{Z}, \quad x \mapsto e_\lambda \otimes x,
\]
which, by direct computation, restricts to an isomorphism
\(
\mathfrak{f}_J : \E_k(V)^{J_1} \ra (A_1 \otimes \E_k(V))^{J}
\)
for any $J \in \mathcal{S}(G,Z)$.
One can also verify that the family of isomorphisms $\{ \mathfrak{f}_J \which J \in \mathcal{S}(G,Z) \}$ commutes with the trace maps.

By \cite[Chapter IV, Theorem 2.2]{Nagao}, the vertex $Y_t$ of $V$ is a minimal subgroup of $G_1$ such that $1\in\E_k(V)_{Y_t}^{G_1}$.  
Then clearly $1\in\E_k(V)_{Z_1Y_t}^{G_1}$.  
Via the isomorphisms $\{\mathfrak{f}_J\}$, we obtain $1\in(A_1\otimes\E_k(V))_{Y'}^G$, where $Y'\in\mathcal{S}(G,Z)$ satisfies $Y'/Z=Y_tZ_1/Z_1$.  
Choose a vertex $Y$ of $W\otimes V$ contained in $Y'$; we claim that $Y' = ZY$.  
Otherwise, if $ZY\subsetneq Y'$, then $1\in(A_1\otimes\E_k(V))_{ZY}^G$.  
Applying the isomorphisms $\{\mathfrak{f}_J\}$ again, we get $1\in\E_k(V)_{(ZY)_1}^{G_1}$.  
This contradicts the minimality of $Y_t$, since $(ZY)_1\subsetneq Y_tZ$.  
Thus we have proved  $YZ/Z = Y_tZ_1/Z_1$.

Conditions (1) and (2) follow directly from the definition. For any $J\in\mathcal{S}(G,N)$, observing that the fixed isomorphism $e_{\lambda}FG\cong A_1\otimes e_{\lambda_1}FG_1$ restricts to an isomorphism $e_{\lambda}FJ\cong A_1\otimes e_{\lambda_1}FJ_1$,  we can define $\nu_J$ analogously to $\nu_G$. 
Condition (3,a) can be proved in the same manner as we did for $\nu_G$ in the preceding paragraphs, while condition (3,b) is evident from the definition. Condition (3,c) has already been discussed for the case $J=G$ in the previous paragraphs; the other cases follow by the same argument.
\end{proof}

\begin{remark}
   Notice that in the above theorem, we have controlled the order of $G_1,$ that is, the equation $|G_1|=|G/Z||\mathbb{F}_p(\lambda)^{\times}|.$ The key point is that $|G_1|$ will not grow as the field $F$ becomes larger. This is because  by our assumption, we have assumed that the field $F$ is big enough for every finite group we consider, which means that it is big enough for $G_1.$
\end{remark}

\section{The Dade--Glauberman--Nagao correspondence}\label{sec_DGN-cor}

Now we  introduce an important theorem that will be used in the reduction. Suppose that $N\zg M$ are finite groups and $M/N$ is a $p$-group. Let $\theta\in\dz(N)$ be $M$-invariant.
 Let $B$ be the unique block of $M$ covering the block $\bl(\theta)$ of $N$ to which $\theta$ belongs. Let $D$ be a defect group of $B.$ Then it follows that $M=ND$ and $N\cap D=1.$ Notice then that $\N_M(D)=D\times \C_N(D).$ 
 The Brauer first main correspondent $B^{\star_D}\in\Bl(\N_M(D))$ of $B$ covers a unique block $b^{\star_D}$ of $\C_N(D).$ 
 The block $b^{\star_D}$ has defect zero, and the unique irreducible complex character $\theta^{\star_D}$ contained in $b^{\star_D}$ is called the \textbf{Dade–Glauberman–Nagao} (\textbf{DGN}) \textbf{correspondent} of $\theta$ with respect to $D$ (see \cite[Chapter V, Theorem 12.1]{Nagao}). Evidently, $\theta^{\star_D} \in \dz(\C_N(D))$.
If $a \in \hH \times \Aut(M)_N$, then $(\theta^{\star_D})^a = (\theta^a)^{\star_{(D^a)}}$.

 Let $N \leqslant G$ be finite groups and $\theta \in \IBr(N)$. Denote by $\theta^{\hH}$ the $\hH$-orbit of $\theta$, and by $\Aut(G)_{N,\theta^{\hH}}$ the stabilizer of $\theta^{\hH}$ in $\Aut(G)_N$. Since $\theta^{\hH}$ consists of characters of $N$, we simply write $\Aut(G)_{\theta^{\hH}}$ for $\Aut(G)_{N,\theta^{\hH}}$. 
We similarly define $\Aut(G)_{\theta}$ and $(\hH \times \Aut(G))_{\theta}$ as the stabilizers of $\theta$ in $\Aut(G)_N$ and $\hH \times \Aut(G)_N$, respectively. Note that $\Aut(G)_{\theta^{\hH}}$ is the image of $(\hH \times \Aut(G))_{\theta}$ under the canonical projection $\hH \times \Aut(G) \to \Aut(G)$. 
 Let $\IBr(G \which \theta^{\hH})$ be the set of irreducible Brauer characters of $G$ whose restriction to $H$ contains an irreducible constituent lying in $\theta^{\hH}$.

\begin{thm}\label{Thm:DGN}
  Suppose that $N\zg G$ are finite groups, and that $M/N$ is a normal $p$-subgroup of $G/N.$ Let $\theta\in\dz(N)$ be $G$-invariant and $D$ be a defect group of the unique block $B$ of $M$ covering $\bl(\theta).$ Let $\theta^{\star_D}$ be the \DGN  correspondent of $\theta$ with respect to $D.$ 
  Then there exists an $\big(\hH\times\Aut(G)\big)_{\theta,D}$-equivariant bijection $$f:\IBr(G\which\theta^{\circ},|D|) \ra\IBr(\N_G(D)\which (\theta^{\star_D})^{\circ},|D|).$$ 
\end{thm}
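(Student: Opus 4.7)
The plan is to use Theorem \ref{thm:centralize} to pass to a Brauer triple in which the underlying normal subgroup is a central $p'$-subgroup, where the DGN correspondence becomes transparent, and then to identify the desired bijection as (a twisted avatar of) the Brauer-character vertex correspondence on $G/N$. Throughout, equivariance is maintained by combining the naturality built into Theorem \ref{thm:centralize} with the canonicity of the Green/Brauer correspondence.

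Concretely, set $\Phi=(\hH\times\Aut(G))_{\theta,D}$ and apply Theorem \ref{thm:centralize} with $Z=N$ and $\lambda=\theta$. This produces a group $G_1$ with a central $p'$-cyclic subgroup $Z_1$, a faithful linear $\lambda_1\in\IBr(Z_1)$, an isomorphism $G/N\cong G_1/Z_1$, and a homomorphism $\omega\colon\Phi\to\hH\times\Aut(G_1)_{Z_1}$ intertwining the two actions. Let $M_1,D_1\leqslant G_1$ correspond to $M,D$ under the identification; since $M_1/Z_1$ is a $p$-group and $Z_1$ is central $p'$, Schur--Zassenhaus gives $M_1=Z_1\times D_1$. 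By condition (3c) of Theorem \ref{thm:centralize} the bijection $\nu_J$ preserves vertex orders, so it suffices to construct an $\omega(\Phi)$-equivariant bijection
$$
f_1\colon \IBr(G_1\which\lambda_1,|D_1|)\ra \dz\big(\N_{G_1}(D_1)/D_1\which\lambda_1\big),
$$
where we have noted that $\lambda_1^{\star_{D_1}}=\lambda_1$ since $\C_{Z_1}(D_1)=Z_1$ and $\bl(\lambda_1)$ is the only block of the $p'$-group $Z_1$ involved.

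To produce $f_1$, recall that the block algebra $e_{\lambda_1}FG_1$ is Morita equivalent to a twisted group algebra $F^{\alpha}[G_1/Z_1]$ via tensoring with the unique simple $e_{\lambda_1}FZ_1$-module, and this equivalence sends Brauer characters of vertex of order $|D_1|$ on the left to Brauer characters of twisted vertex $D_1Z_1/Z_1$ on the right; the same equivalence applies to $\N_{G_1}(D_1)$, and a straightforward check shows that the two $2$-cocycles are compatible under restriction. Under this dictionary the sought bijection is the twisted version of the classical Brauer-character vertex correspondence between irreducible Brauer characters of $G_1/Z_1$ with prescribed vertex $\overline{D_1}:=D_1Z_1/Z_1$ and defect-zero Brauer characters of $\N_{G_1/Z_1}(\overline{D_1})/\overline{D_1}$; this is the correspondence already established, in the required Brauer-theoretic generality, by the second author in \cite{Fu}.

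The main obstacle is $\hH$-equivariance. Equivariance under $\Aut(G)$ is immediate from the canonicity of every step. For $\hH$, one must check that the Morita equivalence above and the underlying Green/Brauer correspondence commute with the $\hH$-action on modules introduced in Section \ref{sec_centralize}. This in turn comes down to the fact that the Brauer pair $(D_1,e_{\lambda_1})$ is $\omega(\Phi)$-stable (both $D_1$ and $\lambda_1$ are) and that the Green correspondence is defined purely in terms of vertex and source data preserved by such $\sigma\in\hH$; tracing a simple $e_{\lambda}FG$-module $V$ through the chain of equivalences one verifies $(V^a)_0=(V_0)^a$ for every $a\in\omega(\Phi)$, which together with Theorem \ref{thm:centralize}(3a) assembles into the full $\Phi$-equivariance claimed.
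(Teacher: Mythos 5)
There is a genuine gap, and it sits exactly where the real content of the theorem lies. Your reduction via Theorem \ref{thm:centralize} only produces bijections $\nu_J:\IBr(J\which\theta)\ra\IBr(J_1\which\lambda_1)$ for subgroups $J$ of $G$ \emph{containing} $N$. The local objects in the statement --- $\N_G(D)$, $\C_N(D)$ and the DGN correspondent $\theta^{\star_D}\in\dz(\C_N(D))$ --- do not contain $N$, so the centralization theorem says nothing about them. Consequently your claim that ``it suffices to construct $f_1:\IBr(G_1\which\lambda_1,|D_1|)\ra\dz(\N_{G_1}(D_1)/D_1\which\lambda_1)$'' is unjustified: you never exhibit any bridge between $\dz(\N_G(D)/D\which\theta^{\star_D})$ and $\dz(\N_{G_1}(D_1)/D_1\which\lambda_1)$, and establishing such a bridge (equivariantly, and compatibly with vertices) is essentially the theorem itself. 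Worse, in the centralized picture the problem trivializes and the local information is destroyed: since $M_1=Z_1\times D_1$ with $Z_1$ central of $p'$-order, $D_1$ is characteristic in $M_1$ and hence normal in $G_1$, so $\N_{G_1}(D_1)=G_1$ and your $f_1$ is just inflation/restriction for the normal $p$-subgroup $D_1$ (Lemma \ref{thm_6.02}(2)). In $G$, by contrast, $D$ is not normal and $\N_G(D)$ is a proper local subgroup; the passage from $\IBr(G\which\theta,|D|)$ to characters of $\N_G(D)$ over $\theta^{\star_D}$ is precisely the nontrivial ``above the DGN correspondence'' step. Your appeal to \cite{Fu} as a ``twisted vertex correspondence on $G_1/Z_1$'' mischaracterizes it and, because $\overline{D_1}$ is normal there, would again give only a trivial statement; the actual content of \cite[Main theorem]{Fu} is an $\hH_1\times\N_{\widetilde G}(D)$-equivariant bijection $\IBr(G\which\theta^{\hH_1})\ra\IBr(\N_G(D)\which(\theta^{\star_D})^{\hH_1})$ preserving vertex orders, and the paper's proof consists of invoking exactly that for $(G,N,\theta,D)$ and then restricting to the vertex-order-$|D|$ parts.

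A secondary, fixable point: Theorem \ref{thm:centralize} requires $\Inn(G)\leqslant\Phi$, but your choice $\Phi=(\hH\times\Aut(G))_{\theta,D}$ does not contain $\Inn(G)$, since inner automorphisms need not stabilize $D$ (they do stabilize $\theta$, which is $G$-invariant). One would have to centralize with respect to $(\hH\times\Aut(G))_{\theta}$ and track $D$ separately. But even after this repair, the main obstruction above remains: centralization of $(G,N,\theta)$ cannot, by itself, reach $\N_G(D)$, $\C_N(D)$ and $\theta^{\star_D}$, so the proposal does not prove the theorem.
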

\begin{proof}
  Let $T=\Aut(G)_{\theta^{\hH},D}$ and $\widetilde{G}= G\semi T$ be the semi-product of $G$ and $T.$ Note that $M$ is normal in $\widetilde{G}$ and $\N_{\widetilde{G}}(D)=\N_G(D)\semi T.$
  Write $\eta=\theta^{\circ}$ and $\eta'=(\theta^{\star_D})^{\circ}.$ 
  By \cite[Theorem A]{Fu}, there exists an $\hH\times \N_{\widetilde{G}}(D)$-equivariant bijection $\varDelta_G:\IBr(G\which \eta^{\hH})\ra\IBr(\N_G(D)\which \eta'^{\hH}).$ Since $\big(\hH\times\Aut(G)\big)_{\theta,D}$ can be regarded as a subgroup of $\hH\times \N_{\widetilde{G}}(D),$ we have that the map $\varDelta_G$ is $\big(\hH\times\Aut(G)\big)_{\theta,D}$-equivariant. By conditions (2) and (3) in \cite[Theorem A]{Fu}, we have that the bijection $\varDelta_G$  restricts to an $\big(\hH\times\Aut(G)\big)_{\theta,D}$-equivariant bijection $f_1:\IBr(G\which\eta) \ra\IBr(\N_G(D)\which\eta').$ 
  For any $\varphi\in\IBr(G\which\eta),$ notice that any vertex of $\varphi$ intersects trivially with $N$  since the block containing $\theta$ has defect zero. The same situation happens with $\IBr(\N_G(D)\which\eta')$ and $\C_N(D).$ By (4) of \cite[Theorem A]{Fu}, we have that $m(\varphi)=m(f_1(\varphi))$ for any $\varphi\in\IBr(G\which\eta),$ where $m(\varphi)$ means the order of any vertex of $\varphi.$ Thus $f_1$  restricts to an $\big(\hH\times\Aut(G)\big)_{\theta,D}$-equivariant bijection $f:\IBr(G\which\eta,|D|) \ra\IBr(\N_G(D)\which\eta',|D|).$ 
\end{proof}


\section{Modular $\hH$-triples}\label{sec_Character_pairs}
The isomorphism theory of character triples (see \cite{Sp17,Sp18} or \cite{Nav-Mckaybook}) and modular character triples (see \cite{SV16}) plays an important role in the reduction of the McKay conjecture  and the  Alperin weight conjecture. It facilitates both the formulation of the inductive conditions and the streamlining of the proofs of these reduction theorems (see, for example, \cite{Sp18} for the reduction of the McKay conjecture).
If Galois automorphisms are considered, then $\hH$-triples (see \cite{Nav-GMckay}) are needed in place of character triples. 
The theory of $\hH$-triples introduced in \cite{Nav-GMckay} is for ordinary characters. In this paper, we will adapt it to Brauer characters.

Let $N\zg G$ be finite groups, and $\eta\in\IBr(N).$ 
Denote by $\eta^{\hH}$ the $\hH$-orbit of $\eta,$ and by $G_{\eta^{\hH}}$ the stabilizer of $\eta^{\hH}$ in $G.$ 
If $G=G_{\eta^{\hH}},$  then we write $(G,N,\eta)_{\hH}$ and call it a \textbf{(modular) $\hH$-triple}.

Let $(G,N,\eta)_{\hH}$ be an $\hH$-triple, and $G_{\eta}$ be the stabilizer of $\eta$ in $G.$ Then we have $G_{\eta}\zg G.$ Let $\Pj$ be projective representation of $G_{\eta}$ associated with $\eta.$ Let $a=(\sigma, g)\in(\hH\times G)_{\eta},$ the stabilizer of $\eta$ in $\hH\times G.$ Then the map $\Pj^a:G_{\eta}\ra\GL_{\eta(1)}(F),h\mapsto \Pj(ghg^{-1})^{\sigma}$ is also a projective representation of $G_{\eta}$ associated with $\eta.$ Thus as described in \cite[Remark 1.3]{Nav-GMckay}, there is a map $\mu_a:G_{\eta}\ra F^{\times}$ uniquely determined by $\Pj$ and $a$ such that $\Pj^a\sim \mu_a\Pj,$ that is, there exists an invertible matrix $T$ satisfying that $\Pj^a(h)=\mu_a(h)T \Pj(h) T^{-1}$ for any $h\in G_{\eta}$. In this case $\mu_a(1)=1$ and the values of $\mu_a$ is constant on any $N$-coset, thus $\mu_a$ can be regarded as a map from $G_{\eta}/N$ to $F^{\times}.$

We now define a partial order relation between $\hH$-triples, analogous to \cite[Definition 1.5]{Nav-GMckay}.

\begin{defi}\label{def:H-trip}
  Suppose that $(G,N,\eta)_{\hH}$ and $(H,M,\eta')_{\hH}$ are $\hH$-triples. Then we write $$(G,N,\eta)_{\hH}\geqslant_c(H,M,\eta')_{\hH}$$ if the following conditions hold:
  \begin{enumerate}
    \item $G=NH,M=N\cap H,\C_{G}(N)\subseteq H.$
    \item $(\hH\times H)_{\eta}=(\hH\times H)_{\eta'}.$ In particular, $H_{\eta}=H_{\eta'}.$
    \item There exist projective  representations $\Pj$ of $G_{\eta}$ and $\Pj'$ of $H_{\eta'}$ associated with $\eta$ and $\eta'$ with factor sets $\alpha$ and $\alpha'$, respectively, such that $\alpha'=\alpha_{H_{\eta}\times H_{\eta}}$ and for any $c\in\C_{G}(N)$, the scalar matrices $\Pj(c)$ and $\Pj'(c)$ are associated with the same scalar.
    \item For any $a\in(\hH\times H)_{\eta},$ the functions $\mu_a$ and $\mu_a'$ determined by $\Pj^a\sim\mu_a\Pj$ and $\Pj'^a=\mu_a'\Pj'$, respectively, agree on $H_{\eta}$. 
  \end{enumerate}
\end{defi}

In the situation described above we say that $(\Pj,\Pj')$ \textbf{gives} $(G,N,\eta)_{\hH}\geqslant_c(H,M,\eta')_{\hH}.$
If $\Pj$ and $\Pj'$ are group representations, then the maps $\mu_a,\mu_a'$ are also group representations. In this case we say that $(\widetilde{\eta},\widetilde{\eta}')$ gives the above ordering of $\hH$-triples, where $\tilde{\eta}$ and $\tilde{\eta}'$ are the Brauer characters corresponding to $\Pj$ and $\Pj',$ respectively.
It follows directly that if $(G,N,\eta)_{\hH} \geqslant_c (H,M,\eta')_{\hH}$, then for any subgroup $N \subseteq J \leqslant G$, we have
$
(J, N, \eta)_{\hH} \geqslant_c (J \cap H, M, \eta')_{\hH}.
$

Let $N$ be a normal subgroup of a finite group $G$, and $H,M$ be subgroups of $G$ such that $G=NH, N\cap H=M.$ 
Let $\eta\in\IBr(N), \eta'\in\IBr(M),$ and assume that $\eta$ is $G$-invariant and $\eta'$ is $H$-invariant.
Suppose that there exist  projective representations $\Pj$ of $G$ and $\Pj'$ of $H$ associated with $\eta$ and $\eta'$ with factor sets $\alpha$ and $\alpha'$, respectively, such that $\alpha'=\alpha_{H\times H}.$ 
Then there is a bijection $$':\IBr(G\which\eta)\ra\IBr(H\which\eta'),$$
such that if $\PQ\otimes\Pj$ affords $\varphi\in\IBr(G\which\eta),$ then $\PQ\otimes\Pj'$ affords $\varphi'$, where $\PQ$ is any projective representation of $G/N\cong H/M$ with the factor set $\alpha^{-1}$ (see \cite[Theorem 10.13]{Nav-Mckaybook}).
We say that this bijection is induced by $(\Pj,\Pj').$

Keep the notation above. Let $a=(\sigma,\phi)\in\big(\hH\times\Aut(G)\big)_{N,H,\eta,\eta'}.$ 
Define the maps $$\Pj^{a}:G\ra\GL_{\eta(1)}(F)\quad\text{and}\quad \Pj'^a:H\ra \GL_{\eta'(1)}(F) $$ by $$\Pj^a(g)=\Pj(g^{\phi^{-1}})^{\sigma},\ \forall g\in G\quad \text{and}\quad \Pj'^a(h)=\Pj'(h^{\phi^{-1}})^{\sigma},\ \forall h\in H.$$  Then $\Pj^a$ and $\Pj'^a$ are projective representations of $G$ and $H$ associated with $\eta$ and $\eta'$, respectively. 
Let $\mu_a,\mu'_a:G/N\cong H/M\ra F^{\times}$ be the maps determined by $\Pj^a\sim\mu_a\Pj$ and $\Pj'^a\sim\mu'_a\Pj',$ respectively. 
The following lemma has been  proved in \cite[Lemma 2.2]{Fu}.

\begin{lem}\label{lem:H-tri1}
  With the previous notation. If $\mu_a=\mu'_a$ through the isomorphism $G/N\cong H/M.$  Then $(\varphi^a)'=(\varphi')^a$ for every $\varphi\in\IBr(G\which\eta).$
\end{lem}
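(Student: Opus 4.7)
The plan is to verify the identity $(\varphi^a)' = (\varphi')^a$ by tracing through the construction of the bijection ${}'$ on both sides in terms of projective representations and matching the resulting tensor products.

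First I would fix $\varphi \in \IBr(G\which \eta)$ and realize it as the Brauer character afforded by $\PQ \otimes \Pj$, where $\PQ$ is a projective representation of $G/N$ with factor set $\alpha^{-1}$; by definition of the bijection ${}'$, the character $\varphi'$ is then afforded by $\PQ \otimes \Pj'$. Applying $a$ to each of the constituent projective representations is functorial in the sense that $(\mathcal{R} \otimes \mathcal{S})^a = \mathcal{R}^a \otimes \mathcal{S}^a$, so $\varphi^a$ is afforded by $\PQ^a \otimes \Pj^a$ and $(\varphi')^a$ by $\PQ^a \otimes \Pj'^a$.

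Next I would use the equivalences $\Pj^a \sim \mu_a \Pj$ and $\Pj'^a \sim \mu'_a \Pj'$ to rewrite these tensor products as $(\mu_a \PQ^a) \otimes \Pj$ and $(\mu'_a \PQ^a) \otimes \Pj'$ respectively. A short factor-set check shows that $\mu_a \PQ^a$ is a projective representation of $G/N$ with factor set $\alpha^{-1}$, which is the hypothesis needed to apply the defining recipe of ${}'$: comparing the factor set of $\Pj^a$ (namely $\alpha^a$, the $a$-twist of $\alpha$) with that of $\mu_a \Pj$ yields the cocycle identity $\alpha^a = \alpha \cdot \partial \mu_a$, where $\partial \mu_a(g,h) = \mu_a(g)\mu_a(h)\mu_a(gh)^{-1}$; since $\PQ^a$ carries the factor set $(\alpha^a)^{-1}$, multiplication by $\mu_a$ brings it back to $\alpha^{-1}$. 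Hence $(\mu_a \PQ^a) \otimes \Pj'$ affords $(\varphi^a)'$ by the definition of ${}'$.

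Finally, the hypothesis $\mu_a = \mu'_a$ on $G/N = H/M$ gives $\mu_a \PQ^a = \mu'_a \PQ^a$, so both $(\varphi^a)'$ and $(\varphi')^a$ are afforded by the same projective tensor product and therefore coincide. The only delicate point is the cocycle bookkeeping linking $\alpha^a$, $\alpha$, and $\partial \mu_a$ under the twist by $a$; once that identity is recorded, the proof reduces to a single substitution. I expect this small cocycle verification to be the main (though routine) obstacle.
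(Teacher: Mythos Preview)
Your proposal is correct and follows essentially the same route as the paper's own proof: realize $\varphi$ via $\PQ\otimes\Pj$, apply $a$, absorb $\mu_a$ into the $\PQ$-factor to read off $(\varphi^a)'$, and then use $\mu_a=\mu'_a$ to match with $(\varphi')^a$. The paper leaves the factor-set check for $\mu_a\PQ^a$ implicit, whereas you spell out the cocycle identity $\alpha^a=\alpha\cdot\partial\mu_a$; otherwise the arguments are the same.
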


\begin{lem}\label{lem:H-tri2}
  Following the notation above. Further assume that $C:=\C_G(N)\subseteq H$ and that there exists a map $\zeta:C\ra F^{\times}$ such that $\Pj(c)=\zeta(c)I_{\eta(1)}, \Pj'(c)=\zeta(c)I_{\eta'(1)}$ for any $c\in C.$ Then for any $\varphi\in\IBr(G\which\eta),$ we have $\varphi_C=\eta(1)/\eta'(1)\cdot \varphi'_C.$
\end{lem}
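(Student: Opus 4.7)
The approach is a direct trace computation using the tensor-product construction of the correspondence $'$. The plan is to evaluate both $\varphi$ and $\varphi'$ at a $p$-regular element $c \in C$ and observe that the ``$\PQ$-factor'' is identical on both sides, so that the ratio collapses to $\eta(1)/\eta'(1)$.

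First I would fix $\varphi \in \IBr(G\mid \eta)$ and choose, as in the construction recalled before Lemma~\ref{lem:H-tri1}, a projective representation $\PQ$ of $G/N = H/M$ with factor set $\alpha^{-1}$ such that $\PQ \otimes \Pj$ affords $\varphi$; then by the very definition of the bijection $'$, $\PQ \otimes \Pj'$ affords $\varphi'$. Let $c \in C$ be a $p$-regular element. Because $C \subseteq H$, the cosets $cN \in G/N$ and $cM \in H/M$ correspond under the canonical isomorphism $G/N \cong H/M$, so $\PQ(cN) = \PQ(cM)$; call this matrix $X$.

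Next, using $\Pj(c) = \zeta(c) I_{\eta(1)}$ and $\Pj'(c) = \zeta(c) I_{\eta'(1)}$, the trace of a tensor product factors as the product of the traces, giving
\begin{equation*}
\varphi(c) = \operatorname{tr}\bigl(X \otimes \zeta(c) I_{\eta(1)}\bigr) = \zeta(c)\,\eta(1)\,\operatorname{tr}(X),
\end{equation*}
and similarly
\begin{equation*}
\varphi'(c) = \operatorname{tr}\bigl(X \otimes \zeta(c) I_{\eta'(1)}\bigr) = \zeta(c)\,\eta'(1)\,\operatorname{tr}(X).
\end{equation*}
Dividing (or rather, taking the linear combination), we get $\varphi(c) = (\eta(1)/\eta'(1))\,\varphi'(c)$ for every $p$-regular $c \in C$, which is exactly the required identity $\varphi_C = \eta(1)/\eta'(1)\cdot \varphi'_C$ as Brauer characters of $C$.

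There is no real obstacle here; the only point that needs care is the identification $\PQ(cN) = \PQ(cM)$, which uses $C \subseteq H$ together with the fact that the natural map $H/M \to G/N$ is an isomorphism, so the single projective representation $\PQ$ may be evaluated on the ``same'' coset on both sides. Everything else is the standard observation that a tensor factor acting as a scalar contributes only its scalar (times the appropriate dimension) to the character.
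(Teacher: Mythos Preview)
Your proof is correct and is exactly the direct computation the paper has in mind when it says the result ``can easily be seen from the definition'' (and cites \cite[Lemma~10.16]{Nav-Mckaybook}). One small point of precision: since Brauer characters are sums of lifted eigenvalues rather than traces over $F$, you should phrase the key step as ``the eigenvalue multiset of $X\otimes\zeta(c)I_{\eta(1)}$ is $\eta(1)$ copies of that of $\zeta(c)X$'' rather than literally writing $\varphi(c)=\operatorname{tr}(\cdots)$; the conclusion is of course unchanged.
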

\begin{proof}
  This can  be seen from the definition, or  \cite[Lemma 10.16]{Nav-Mckaybook}.
\end{proof}

Ordered pairs of $\hH$-triples yield an equivariant bijection between related character sets.

\begin{thm}\label{thm:H-triB}
  Let $(G,N,\eta)_{\hH}$ and $(H,M,\eta')_{\hH}$ be $\hH$-triples such that $(G,N,\eta)_{\hH}\geqslant_c(H,M,\eta')_{\hH}$. Then for any $N\leqslant J\leqslant G$, there exists an $\big(\hH\times\N_H(J)\big)_{\eta}$-equivariant bijection 
  \begin{equation}\label{equ:add1}
    \varDelta_J:\IBr(J\which\eta)\ra\IBr (J\cap H\which \eta')
  \end{equation} such that  $\varphi_{\C_J(N)}=\eta(1)/\eta'(1)\cdot \varDelta_J(\varphi)_{\C_J(N)}$ for any $\varphi\in\IBr(J\which\eta).$
\end{thm}
\begin{proof}
Let $(G,N,\eta)_{\hH} \geqslant_c (H,M,\eta')_{\hH}$ be given by $(\Pj,\Pj')$ and let the bijection (\ref{equ:add1}) be induced by $(\Pj_J,\Pj'_{J\cap H})$.  
The conditions in the theorem then follow from Lemmas \ref{lem:H-tri1} and \ref{lem:H-tri2}.
\end{proof}
 
The following theorem tells that the ordering of $\hH$-triples only depends on the automorphisms of the normal subgroup induced via conjugation by the overgroup; see \cite[Theorem 2.9]{Nav-GMckay}.

\begin{thm}\label{thm:H-triA}
 Let $(G,N,\eta)_{\hH}$ and $(H,M,\eta')_{\hH}$ be $\hH$-triples such that $(G,N,\eta)_{\hH}\geqslant_c(H,M,\eta')_{\hH}.$ Let $(\widehat{G},N,\eta)_{\hH}$ and $(\widehat{H},M,\eta')_{\hH}$ be $\hH$-triples such that $\widehat{G}=N\widehat{H}, M=N\cap\widehat{H},\C_{\widehat{G}}(N)\subseteq \widehat{H}$ and $(\hH\times\widehat{H})_{\eta}=(\hH\times \widehat{H})_{\eta'}.$ Let $\epsilon:H\ra\Aut(N),\hat{\epsilon}:\widehat{H}\ra\Aut(N)$ be the group homomorphisms induced by conjugation. 
 If $\hat{\epsilon}(\widehat{H})\subseteq \epsilon(H)$ then $$(\widehat{G},N,\eta)_{\hH}\geqslant_c(\widehat{H},M,\eta')_{\hH}.$$
\end{thm} 
\begin{proof}
Let $H_1 \subseteq H$ be the inverse image of $\hat{\epsilon}(\widehat{H})$ under the group homomorphism $\epsilon$. Then, by the properties of the ordering relation of $\hH$-triples, we have $(NH_1, N, \eta)_{\hH} \geqslant_c (H_1, M, \eta')_{\hH}$. Thus, by replacing $G$ with $NH_1$, we may assume that $\hat{\epsilon}(\widehat{H}) = \epsilon(H)$.

Assume that $\hat{\epsilon}(\widehat{H}) = \epsilon(H)$.
Suppose that $(\Pj,\Pj')$ gives $(G,N,\eta)_{\hH}\geqslant_c(H,M,\eta')_{\hH}.$ Let $\alpha$ and $\alpha'$ be the factor sets associated with $\Pj$ and $\Pj',$ respectively. 
Let $\mathcal{T}$ be a complete set of representatives of $M\C_{G}(N)$-cosets in $H_{\eta}$ with $1\in\mathcal{T}.$ For any $t\in \mathcal{T},$ choose $\hat{t}\in\widehat{H}_{\eta}$ be such that $\epsilon(t)=\hat{\epsilon}(\hat{t})$ and with $\hat{1}=1.$ Notice that the set $\widehat{\mathcal{T}}:=\{\hat{t}\which t\in\mathcal{T}\}$ is a complete set of representatives of $M\C_{\widehat{G}}(N)$-cosets in $\widehat{H}_{\eta}.$
Let $\mu_0:\ZZ(N)\ra F^{\times}$ be the linear representation such that $\Pj(z)=\mu_0(z)I_{\eta(1)}$ and $\Pj'(z)=\mu_0(z)I_{\eta'(1)}$ for $z\in\ZZ(N).$ Let $\hat{\mu}:\C_{\widehat{G}}(N)\ra F^{\times}$ be any map such that $\hat{\mu}(cz)=\hat{\mu}(c)\mu_0(z)$ for $c\in \C_{\widehat{G}}(N)$ and $z\in \ZZ(N).$

We define the maps
$$\widehat{\Pj}:\widehat{G}_{\eta}\ra\GL_{\eta(1)}(F)\quad \text{and} \quad \widehat{\Pj}':\widehat{H}_{\eta}\ra\GL_{\eta'(1)}(F)$$ by
$$\widehat{\Pj}(\hat{t}n\hat{c})=\Pj(t)\Pj(n)\hat{\mu}(\hat{c})\quad \text{and}\quad \widehat{\Pj}'(\hat{t}m\hat{c})=\Pj'(t)\Pj'(m)\hat{\mu}(\hat{c}),$$ where $t\in\mathcal{T},n\in N,\hat{c}\in\C_{\widehat{G}}(N)$ and $m\in M.$ 
It can be seen from the definition that for $\hat{c}\in\C_{\widehat{G}}(N),$ the scalar matrices $\widehat{\Pj}(\hat{c})$ and $\widehat{\Pj}'(\hat{c})$ are associated with the same scalar $\hat{\mu}(\hat{c}).$ 
Following the proof of \cite[Theorem 10.18]{Nav-Mckaybook}, it can be shown that $\widehat{\Pj}$ and $\widehat{\Pj}'$ are projective representations associated with $\eta$ and $\eta',$ respectively, and the factor sets $\hat{\alpha}$ associated with $\widehat{\Pj}$ and $\hat{\alpha}'$ associated with $\widehat{\Pj}',$ agree on $\widehat{H}_{\eta}.$ 

Let $\hat{a}\in(\hH\times\widehat{H})_{\eta}.$ Let $\hat{\mu}_{\hat{a}}:\widehat{G}_{\eta}\ra F^{\times}$ and $\hat{\mu}'_{\hat{a}}:\widehat{H}_{\eta}\ra F^{\times}$ be the maps determined by $\widehat{\Pj}^{\hat{a}}\sim \hat{\mu}_{\hat{a}}\widehat{\Pj}$ and $\widehat{\Pj}'^{\hat{a}}\sim \hat{\mu}'_{\hat{a}}\widehat{\Pj}',$ respectively. It remains to prove that $\hat{\mu}_{\hat{a}}$ and $\hat{\mu}'_{\hat{a}}$ agree on $\widehat{H}_{\eta}.$ This has been done in the proof of \cite[Theorem 2.9]{Nav-GMckay} (see Remark \ref{Rem:H-trip,cor}). 
\end{proof}

\begin{remark}\label{Rem:H-trip,cor}
  There are some misprints  in the proof of \cite[Theorem 2.9]{Nav-GMckay}. Keep the notation there, in fact $$\hat{\mu}_{\hat{h}^{-1}\sigma}(\hat{t}m\hat{c})=\mu_{h^{-1} \sigma}(tm)\lambda(c_{h}^{-1})^{\sigma}\hat{\lambda} (\hat{c}_{\hat{h}}(\hat{c})^{\hat{h}})^{\sigma}\hat{\lambda} (\hat{c})^{-1}(\alpha((tm)^{h},c^{-1}_h)^{-1})^{\sigma}, $$ where $\alpha$ is the factor set associated with $\Pj.$ However, this will not affect the correctness of that Theorem.
\end{remark}

\section{Constructing new ordering $\hH$-triples from old ones}\label{sec_new-H-tri}
In this section, we are going to introduce some ways of constructing new ordering relations between $\hH$-triples from given ones. There are  three main cases, all of which can be found in \cite[Section 2]{Nav-GMckay}.  After this, we prove a lifting and descent lemma of ordering relation of $\hH$-triples (see Lemma \ref{lem:H-triUD}).

\begin{lem}\label{lem:H-triA}
Let $(G,N,\eta)_{\hH} \geqslant_c (H,M,\eta')_{\hH}$ be $\hH$-triples and assume that $G$ is a subgroup of a group $A$. Then for any $a = (\sigma, g) \in \hH \times A$, we have $(G^a,N^a,\eta^a)_{\hH}\geqslant_c(H^a,M^a,\eta'^a)_{\hH}.$
\end{lem}
\begin{proof}
By \cite[Lemma 2.3]{Nav-GMckay}, we have $(G,N,\eta^{\sigma})_{\hH}\geqslant_c (H,M,\eta'^{\sigma})_{\hH}.$ Thus we may assume that $a=(1,g)$ with $g\in A.$ 
The lemma then follows by Applying \cite[Lemma 2.1]{Nav-GMckay} to the group isomorphism $f:G\ra G^g,x\mapsto x^g.$ 
\end{proof}

Now, we introduce some notation of \textbf{wreath products}, following \cite[10.1]{Nav-Mckaybook}. If $N$ is a finite group and $m\geqslant 1$ is an integer, we denote by $N^m$ the group $N\times\cdots\times N$ ($m$ times). 
Let $\bS_m$ be the symmetric group of degree $m,$ where the maps are composed from the left. 
For any $(n_1,\cdots,n_m)\in N^m$ and $a_1\cdots a_mf\in\Aut(N)\wr \bS_m:=\Aut(N)^m\rtimes \bS_m,$ where $a_i\in\Aut(N)$ and $f\in \bS_m,$ let
$$(n_1,\cdots,n_m)^{a_1\cdots a_mf}=\Big(\big(n_{f^{-1}(1)}\big)^{a_{f^{-1}(1)}},\cdots,\big(n_{f^{-1}(m)}\big)^{a_{f^{-1}(m)}}\Big).$$
Thus we can view $\Aut(N)\wr \bS_m$ as a subgroup of $\Aut(N^m)$ (if $N$ is nontrivial).
It can be calculated that
$$(\eta_1\times\cdots\times\eta_m)^{a_1\cdots a_mf}= \big(\eta_{f^{-1}(1)}\big)^{a_{f^{-1}(1)}}\times\cdots\times\big(\eta_{f^{-1}(m)}\big)^{a_{f^{-1}(m)}} ,$$ where $\eta_1\times\cdots\times\eta_m$ is a Brauer character of $N^m.$
The Galois automorphisms action on the characters of $N^m$ is much simpler. For any $\sigma\in\hH,$ it is easy to check that
\begin{align*}
  (\eta_1\times\cdots\times\eta_m)^{\sigma}=\eta_1^{\sigma}\times\cdots\times\eta_m^{\sigma}.
\end{align*}

The following lemma is an application of \cite[Theorem 2.7]{Nav-GMckay} to modular $\hH$-triples. The proof follows the same routine, except that all representations here are taken over the field $F$.

\begin{lem}\label{lem:H-triB}
Let $(G,N,\eta)_{\hH}\geqslant_c(H,M,\eta')_{\hH}$ be $\hH$-triples. 
Let $k,l\geqslant 1$ be two integers, and $\sigma_1,\cdots,\sigma_k\in\hH$. Let $\widetilde{\eta}=(\eta^{\sigma_1})^{l}\times\cdots\times
(\eta^{\sigma_k})^{l}\in\IBr(N^m)$ and $\widetilde{\eta}'=(\eta'^{\sigma_1})^{l}\times\cdots
\times(\eta'^{\sigma_k})^{l}\in\IBr(M^m),$ where $m=kl.$
  Assume that
     $\eta^{\sigma_i}$ and $\eta^{\sigma_j}$ are not $G$-conjugate whenever $i\neq j.$ 
Then we have \[((G\wr \bS_m)_{\widetilde{\eta}^{\hH}},N^m,\widetilde{\eta})_{\hH} \geqslant_c((H\wr   \bS_m)_{\widetilde{\eta}'^{\hH}},M^m,\widetilde{\eta}')_{\hH}.\]
\end{lem}

\begin{lem}\label{lem:H-triC}

For any $i=1,\cdots,m,$ let $(G_i,N_i,\eta_i)_{\hH}$ and $(H_i,M_i,\eta_i')_{\hH}$ be $\hH$-triples such that $(G_i,N_i,\eta_i)_{\hH}\geqslant_c(H_i,M_i,\eta'_i)_{\hH}.$
  Let $G=G_1\times\cdots\times G_m, H= H_1\times\cdots\times H_m, N=N_1\times\cdots\times N_m, M=M_1\times\cdots\times M_m, \eta=\eta_1\times\cdots\times\eta_m$ and $\eta'=\eta_1'\times\cdots\times\eta_m'.$
Then we have  \[(G_{\eta^{\hH}},N,\eta)_{\hH}
  \geqslant_c(H_{\eta'^{\hH}}, M,\eta')_{\hH}.\]
  
\end{lem}
\begin{proof}
The case $m=2$ was settled in \cite[Lemma 2.5]{Nav-GMckay}; for the general case, we proceed by induction.
\end{proof}

If $N$ is a normal subgroup of $G,$ we denote by $\epsilon_N:G\ra\Aut(N)$ the group homomorphism induced by conjugation.
\begin{lem}\label{lem:H-triUD}
  Suppose that $(G,N,\eta)_{\hH}$ and $(H,M,\eta')_{\hH}$ are $\hH$-triples such that $G=NH, N\cap H=M$ and $\C_G(N)\subseteq H.$ Let $Q\subseteq M\cap \ker(\eta)\cap\ker(\eta')$ be a normal subgroup $G.$ We denote by $\bar{T}=TQ/Q$ for any subgroup $T$ of $G$ and $\bar{\eta},\bar{\eta}'$ the deflation of the corresponding characters to the related subgroups of $\bar{G}.$ Assume that the group homomorphism $\epsilon_N(G)\ra\epsilon_{\bar N}(\bar G)$ induced by quotient of maps is injective. Then we have that $(G,N,\eta)_{\hH}\geqslant_c(H,M,\eta')_{\hH}$ if and only if $(\bar{G},\bar{N},\bar{\eta})_{\hH}\geqslant_c (\bar{H},\bar{M},\bar{\eta}')_{\hH}.$ 
\end{lem}
\begin{proof}
It is easy to see that $(\bar G,\bar N,\bar{\eta})_{\hH}$ and $(\bar H,\bar M,\bar{\eta}')_{\hH}$ are $\hH$-triples such that $\bar G=\bar N\bar H$ and $\bar  N\cap\bar  H=\bar M.$
Moreover, $(\hH\times H)_{\eta}=(\hH\times H)_{\eta'}$ if and only if $(\hH\times \bar H)_{\bar{\eta}}=(\hH\times \bar H)_{\bar{\eta}'}.$
 By our assumption that the homomorphism $\epsilon_N(G)\ra \epsilon_{\bar N}(\bar G)$ induced by quotient of maps is injective, we have $\overline{\C_G(N)}=\C_{\bar{G}}(\bar{N}).$ 
 Suppose that $(\Pj,\Pj')$ gives the ordering relation of $(G,N,\eta)_{\hH}\geqslant_c(H,M,\eta')_{\hH}.$ 
 Then we can regard $\Pj$ and $\Pj'$ as projective representations of $\bar{G}_{\bar{\eta}}$ and $\bar{H}_{\bar{\eta}'},$ respectively, and check that it gives  $(\bar{G},\bar{N},\bar{\eta})_{\hH}\geqslant_c (\bar{H},\bar{M},\bar{\eta}')_{\hH}.$ 
 Conversely if $(\Pj,\Pj')$ gives  $(\bar{G},\bar{N},\bar{\eta})_{\hH}\geqslant_c (\bar{H},\bar{M},\bar{\eta}')_{\hH}.$ 
 Then we can lift $\Pj$ and $\Pj'$ to projective representations of $G_{\eta}$ and $H_{\eta'},$ respectively, and chenk that it gives $(G,N,\eta)_{\hH}\geqslant_c(H,M,\eta')_{\hH}.$
\end{proof}


\section{The Inductive NAW condition}\label{sec_ind_GAW}
In this section, we  introduce the \textbf{inductive NAW condition} for finite non-abelian simple groups.
Recall that the \textbf{universal $p'$-covering group} of a perfect group $L$ is the maximal perfect central extension of $L$ by an abelian $p'$-group. For a $p'$-group, we mean a finite group of order not divisible by $p.$

\begin{defi}[Inductive NAW condition for $L$ at $p$]\label{Def:ind-GAW}
  Let $L$ be a finite non-abelian  simple group of order divisible by $p$ and let $S$ be the universal $p'$-covering group of $L$. We say that the inductive NAW condition holds for $L$ (at $p$)  if the following conditions hold.
  \begin{enumerate}
    \item There exists an $\hH\times\Aut(S)$-equivariant bijection
    \[\Omega:\IBr(S)\ra\W^{\circ}(S)/\sim_S.\]
    \item Let ${A}=S\semi \Aut(S)$. Then for any $\eta\in\IBr(S)$ and $(Q,\delta)\in\Omega(\eta),$ we have $$({A}_{\eta^{\hH}},S,\eta)_{{\hH}}\geqslant_c
        (\N_{{A}}(Q)_{\delta^{\hH}},\N_S(Q),\delta)_{\hH}.$$
  \end{enumerate} 
\end{defi}

\begin{remark}\label{themethod}
  {\rm
  See (\ref{equ:wights}) for the definition of $\W^{\circ}(S).$ In fact condition (1) in the above definition guarantees the following:
  \begin{enumerate}
    \item $\N_{{A}}(Q)_{\eta^{\hH}} =\N_{{A}}(Q)_{\delta^{\hH}}$ and ${A}_{\eta^{\hH}}=S\N_{{A}}(Q)_{\delta^{\hH}}.$
    \item $\big(\hH\times \N_{{A}}(Q)\big)_{\eta}= \big(\hH\times \N_{{A}}(Q)\big)_{\delta}.$ 
  \end{enumerate}
  }
\end{remark}

The aim of the remaining part of this section is to give a proof of Theorem \ref{To_p'exten}, which is needed in a step in the final reduction Theorem \ref{Redution}.

\begin{lem}\label{To_directproduct}
  Let $m\geqslant 1$ be an integer. Let $L$ be a finite non-abelian  simple group of order divisible by $p,$ for which the inductive NAW condition holds.  Let $S$ be the universal $p'$-covering group of $L$. Then the following conditions hold.
  \begin{enumerate}
    \item There exists an $\hH\times\Aut(S^m)$-equivariant bijection
    \[\widetilde\Omega:\IBr(S^m)\ra\W^{\circ}(S^m)/\sim_{S^m}.\]
    \item Let $\widetilde{A}=S^m\semi \Aut(S^m)$. Then for any $\tilde\eta\in\IBr(S^m)$ and $(\widetilde Q,\tilde\delta)\in\widetilde\Omega(\tilde\eta),$ we have $$(\widetilde{A}_{\tilde\eta^{\hH}},S^m, \tilde\eta)_{{\hH}}\geqslant_c
        (\N_{\widetilde{A}}(\widetilde Q)_{\tilde\delta^{\hH}}, \N_{S^m}(\widetilde Q),\tilde\delta)_{\hH}.$$
  \end{enumerate}
\end{lem}
\begin{proof}
Since the inductive NAW condition holds for $L$, by our assumption we have an $\hH \times \Aut(S)$-equivariant bijection
\[
\Omega: \IBr(S) \longrightarrow \W^{\circ}(S)/\sim_S,
\]
such that, letting $A = S \rtimes \Aut(S)$, for any $\eta \in \IBr(S)$ and $(Q,\delta) \in \Omega(\eta)$ we have
\begin{equation}\label{equ:new2}
 (A_{\eta^{\hH}}, S, \eta)_{\hH} \geqslant_c (\N_A(Q)_{\delta^{\hH}}, \N_S(Q), \delta)_{\hH}.
\end{equation}
Any $\tilde{\eta} \in \IBr(S^m)$ can be written as $\tilde{\eta} = \eta_1 \times \cdots \times \eta_m$, where $\eta_i \in \IBr(S)$ for $1 \leqslant i \leqslant m$. For each $i$, choose $(Q_i, \delta_i) \in \Omega(\eta_i)$ and set
\[
\widetilde Q = Q_1 \times \cdots \times Q_m, \quad \tilde{\delta} = \delta_1 \times \cdots \times \delta_m.
\]
It is straightforward to verify that $(\widetilde{Q}, \tilde{\delta}) \in \W^{\circ}(S^m)$. 
Define $\widetilde{\Omega}(\tilde{\eta})$ to be the $S^m$-conjugacy class containing $(\widetilde{Q}, \tilde{\delta})$, it is independent of the choice of $(Q_i,\delta_i)\in\Omega(\eta_i).$

{\it Step 1.} The map  $\widetilde\Omega:\IBr(S^m)\ra\W^{\circ}(S^m)/\sim_{S^m}$ is an $\hH\times\Aut(S^m)$-equivariant bijection.

The injectivity of the map $\widetilde{\Omega}$ follows directly from the definition.  
By \cite[Lemma 2.3]{Nav1}, every radical subgroup $\widetilde{Q}$ of $S^m$ decomposes as $\widetilde{Q} = Q_1 \times \cdots \times Q_m$, where $Q_i \in \Rad(S)$ for $1 \leqslant i \leqslant m$.  
Since $\N_{S^m}(\widetilde{Q}) = \N_S(Q_1) \times \cdots \times \N_S(Q_m)$, any $\tilde{\theta} \in \dz(\N_{S^m}(\widetilde{Q}) / \widetilde{Q})$ can be written as $\tilde{\theta} = \theta_1 \times \cdots \times \theta_m$ with $\theta_i \in \dz(\N_S(Q_i)/Q_i)$ for all $1 \leqslant i \leqslant m$.  
The surjectivity of $\widetilde{\Omega}$ follows from this observation.

  By \cite[Lemma 10.24]{Nav-Mckaybook}, we have $\Aut(S^m)=\Aut(S)\wr \bS_m.$
  Thus for any $a\in\hH\times\Aut(S^m),$ we can write $a=\sigma a_1\cdots a_m f,$ where $\sigma\in\hH, a_i\in\Aut(S)$, and $f\in\bS_m$. 
  Let $\tilde\eta=\eta_1\times\cdots\times\eta_m\in\IBr(S^m).$ 
   For any $1\leqslant i\leqslant m$, choose $(Q_i, \delta_i) \in \Omega(\eta_i)$. Then
$
(\widetilde Q= Q_1 \times \cdots \times Q_m,  \tilde{\delta}=\delta_1 \times \cdots \times \delta_m)\in\widetilde{\Omega}(\tilde{\eta}).
$
  As introduced in the notation before Lemma \ref{lem:H-triB},
  we have $$\tilde\eta^a=\eta_{f^{-1}(1)}^{\sigma a_{f^{-1}(1)}}\times\cdots\times\eta_{f^{-1}(m)}^{\sigma a_{f^{-1}(m)}}.$$
Since $(Q_{f^{-1}(i)},\delta_{f^{-1}(i)})^{\sigma a_{f^{-1}(i)}}\in\Omega (\eta_{f^{-1}(i)}^{\sigma a_{f^{-1}(i)}})$ by the $\hH\times\Aut(S)$-equivariance of $\Omega,$ we have
\begin{align*}
  \widetilde{\Omega}(\tilde{\eta}^a)&=
  (Q_{f^{-1}(1)}^{a_{f^{-1}(1)}}\times\cdots\times Q_{f^{-1}(m)}^{a_{f^{-1}(m)}}, 
  \delta_{f^{-1}(1)}^{\sigma a_{f^{-1}(1)}} \times\cdots\times \delta_{f^{-1}(m)}^{\sigma a_{f^{-1}(m)}})_{\sim_{S^m}} \\
   & =(Q_1 \times \cdots \times Q_m,   \delta_1 \times \cdots \times \delta_m)^{\sigma a_1\cdots a_mf}_{\sim_{S^m}}\\
   &=(\widetilde{Q},\tilde{\delta})^a_{\sim_{S^m}} =\widetilde{\Omega}(\tilde\eta)^a.
  \end{align*}
This establishes the $\hH \times \Aut(S^m)$-equivariance of $\widetilde{\Omega}$ and completes the goal of the step.

 Let $\widetilde{A}=S^m\semi \Aut(S^m)$. 
Let $\tilde\eta\in\IBr(S^m)$ and $(\widetilde Q,\tilde\delta)\in\widetilde\Omega(\tilde\eta).$ 
 It remains to show that 
 \begin{equation}\label{equ:new3}
  (\widetilde{A}_{\tilde\eta^{\hH}},S^m, \tilde\eta)_{{\hH}}\geqslant_c
        (\N_{\widetilde{A}}(\widetilde Q)_{\tilde\delta^{\hH}}, \N_{S^m}(\widetilde Q),\tilde\delta)_{\hH}.
 \end{equation}
  By Lemma \ref{lem:H-triA}, we can adjust $\tilde\eta$ and $\widetilde Q$ by any automorphism of $\hH\times \widetilde{A}$.
   Note that $\Aut(S^m)=\Aut(S)\wr \bS_m.$
   Thus we may assume that
  \begin{align*}
    \tilde\eta&=\prod_{j=1}^{r}\prod_{k=1}^{s_j}\xi_{jk}^{m_{jk}} \quad {\rm and}\\
    \widetilde Q&=\prod_{j=1}^{r}Q_{j}^{\sum_{k=1}^{s_j} m_{jk}}
  \end{align*}
  such that the following conditions hold:
  \begin{enumerate}
    \item $(j,k)=(j',k')$ if and only if $\xi_{jk}$ and $\xi_{j'k'}$ are $\Aut(S)$-conjugate;
    \item $j = j'$ if and only if $\xi_{jk}$ and $\xi_{j'k'}$ are $\hH \times \Aut(S)$-conjugate and additionally satisfy $m_{jk} = m_{j'k'}$;
    \item $\xi_{jk}$ and $\xi_{j'k'}$ are $\hH$-conjugate if $j=j'$.
  \end{enumerate}
  Note that $m_{jk}\geqslant 1$ is an integer, and different $\xi_{jk}$ in the expression of $\tilde\eta$ are in lexicographic order relative to $(j,k)$ and different $Q_j$ in the expression of $\widetilde Q$ are in order relative to $j.$ 
  Of course $\sum_{j=1}^{r}\sum_{k=1}^{s_j}m_{jk}
  =\sum_{j=1}^{r}s_jm_{j1}=m$.
  Note also that $\tilde{\delta}$ is uniquely determined by $\tilde{\eta}$ and $\widetilde{Q};$ more precisely, \[\tilde\delta=\prod_{j=1}^{r}\prod_{k=1}^{s_j}\lambda _{jk}^{m_{jk}},\] where $(Q_j,\lambda_{jk})\in\Omega(\xi_{jk})$ for each such pair $(j,k).$
 Let $A=S\semi \Aut(S)$, and identify $\widetilde{A}$ with $A\wr \bS_m.$
 By assumption (see \ref{equ:new2}), for any $1\leqslant j\leqslant r,$ we have 
\[
 \big(A_{\xi_{j1}^{\hH}}, S, \xi_{j1}\big)_{\hH} \geqslant_c \big(\N_A(Q_j)_{\lambda_{j1}^{\hH}}, \N_S(Q_j), \lambda_{j1}\big)_{\hH}.
\]
Let $\tilde\eta_j=\prod_{k=1}^{s_j}\xi_{jk}^{m_{j1}}$ and $\tilde\delta_j=\prod_{k=1}^{s_j}\lambda_{jk}^{m_{j1}},1\leqslant j\leqslant r.$  Then applying Lemma \ref{lem:H-triB}, we have
\[
 \big((A\wr \bS_{s_j m_{j1}})_{\tilde{\eta}_j^{\hH}}, S^{s_j m_{j1}},\tilde{\eta}_j\big)_{\hH} \geqslant_c \big((\N_A(Q_j)\wr  \bS_{s_j m_{j1}})_{\tilde{\delta}_j^{\hH}}, \N_S(Q_j)^{s_jm_{j1}}, \tilde{\delta}_j\big)_{\hH}
\]
for every $j.$
Notice that if $a = \sigma a_1 \cdots a_m f$ stabilizes $\tilde{\eta}$, then $f$ cannot permute indices across different $j$; this means that
\[
(A \wr \bS_m)_{\tilde{\eta}^{\hH}} = \prod_{j=1}^{r} (A \wr \bS_{s_j m_{j1}})_{\tilde{\eta}_j^{\hH}},
\text{ and similarly, }
(\N_{A \wr \bS_m}(\widetilde{Q}))_{\tilde{\delta}^{\hH}} = \prod_{j=1}^{r} (\N_A(Q_j) \wr \bS_{s_j m_{j1}})_{\tilde{\delta}_j^{\hH}}.
\]
Applying Lemma \ref{lem:H-triC} then yields (\ref{equ:new3}), as desired. This completes the proof of the lemma. 
\end{proof}

\begin{lem}\label{To_perfectp'exten}
  Let $K$ be a finite perfect group. Suppose that $Z:=\ZZ(K)$ is $p'$-cyclic and $K/Z$ is a direct product of isomorphic non-abelian simple groups for which the inductive NAW condition holds. Then the following conditions hold.
  \begin{enumerate}
   \item There exists an $\hH\times\Aut(K)$-equivariant bijection
    \[\Omega:\IBr(K)\ra\W^{\circ}(K)/K.\]
    \item Let ${A}=K\semi \Aut(K)$. Then for any $\eta\in\IBr(K)$ and $(Q,\delta)\in\Omega(\eta),$ we have $$({A}_{\eta^{\hH}},S,\eta)_{{\hH}}\geqslant_c
        (\N_{{A}}(Q)_{\delta^{\hH}},\N_S(Q),\delta)_{\hH}.$$
  \end{enumerate}
\end{lem}
\begin{proof}
  Let $K/Z\cong L^m$, where $L$ is a non-abelian simple group for which the inductive NAW condition holds by our assumption. Let $S$ be the universal $p'$-covering group of $L.$ Since $K$ is a perfect $p'$-central extension of $L^m$, there exists a group epimorphism $$\epsilon:S^m\ra K$$ such that $(S^m,\epsilon)$ is the universal $p'$-central extension of $K$. 
  Note that $\epsilon^{-1}(Z)=\ZZ(S^m).$
  By the properties of the universal $p'$-central extensions, for every $\phi\in\Aut(K),$ there is a unique $\hat{\phi}\in\Aut(S^m)$ such that $\hat{\phi}\epsilon=\epsilon\phi$ (the maps here are composed from the left).
  And through the map $\phi\mapsto \hat{\phi},$ we identify $\Aut(K)$ with $\Aut(S^m)_{\ker(\epsilon)}.$   
  By Lemma \ref{thm_6.02}, there is a natural bijection from $\Rad(K)$ to $\Rad(S^m)$ given by sending $Q \in \Rad(K)$ to the normal Sylow $p$-subgroup of $\epsilon^{-1}(Q)$. We identify $\Rad(K)$ with $\Rad(S^m)$ via this bijection.
We follow the notation in Lemma \ref{To_directproduct}. Under the above identification, $\W^{\circ}(K)$ can be viewed as the subset of $\W^{\circ}(S^m)$ consisting of elements $(\widetilde{Q}, \tilde{\delta})$ such that $\tilde{\delta}$ lies over the trivial character of $\ker(\epsilon)$.
Via inflation of characters, we may also regard $\IBr(K)$ as a subset of $\IBr(S^m)$.
Then the equivariant bijection $\widetilde{\Omega}: \IBr(S^m) \to \W^{\circ}(S^m)/\sim_{S^m}$  restricts to an $\hH \times \Aut(K)$-equivariant bijection
\[
\Omega: \IBr(K) \longrightarrow \W^{\circ}(K)/\sim_{K}.
\]

Let $\eta \in \IBr(K)$ and $(Q,\delta) \in \Omega(\eta)$. Let $\widetilde{Q} \in \Rad(S^m)$ be the radical subgroup of $S^m$ corresponding to $Q$, and let $\tilde{\eta}$, $\tilde{\delta}$ denote the inflations of $\eta$ and $\delta$ to $S^m$ and $\N_{S^m}(\widetilde{Q})$, respectively. Note that $\epsilon\bigl(\N_{S^m}(\widetilde{Q})\bigr) = \N_K(Q)$.
Since $(\widetilde{Q},\tilde{\delta})\in\widetilde{\Omega}(\tilde\eta)$ by construction, we have \[\big( (S^m\semi \Aut(K))_{\tilde{\eta}^{\hH}}, S^m, \tilde\eta  \big)_{\hH}\geqslant_c \big( \N_{S^m\semi \Aut(K)}(\widetilde Q)_{\tilde{\delta}^{\hH}},\N_{S^m}(\widetilde{Q}),\tilde{\delta}    \big)_{\hH}.\]
By Lemma \ref{lem:H-triUD}, this ordering relation of $\hH$-triples descents to
\[\big( (K\semi \Aut(K))_{{\eta}^{\hH}}, K, \eta  \big)_{\hH}\geqslant_c \big( \N_{K\semi \Aut(K)}( Q)_{{\delta}^{\hH}},\N_{K}({Q}),{\delta}    \big)_{\hH}.\]
This completes the proof of the Lemma.
\end{proof}

In fact, we can omit the assumption that $K$ is perfect in Lemma \ref{To_perfectp'exten}, and we will discuss this in the following theorem.
For the central products of characters we follow the standard notation in \cite[Lemma 2.2]{Nav1}.
Let $Z \leqslant K$ be finite groups and $\chi \in \IBr(K)$. 
We denote by $\IBr(Z \which \chi)$ the set of irreducible Brauer characters of $Z$ over which  $\chi$ lies over.
Let $Z$ be a central subgroup of $K$ and let $\lambda \in \IBr(Z)$.
We denote by $\W^{\circ}(K \which \lambda) \subseteq \W^{\circ}(K)$ the subset consisting of elements $(Q,\delta)$ such that $\delta$ lies over $\lambda$; this subset is closed under $K$-conjugation.

\begin{thm}\label{To_p'exten}
  Let $Z$ be a $p'$-cyclic central subgroup of a finite group $K.$   Suppose that  $K/Z$ is a direct product of isomorphic non-abelian simple groups for which the inductive NAW condition holds, or that $K/Z$ is a $p'$-group. Let $\lambda\in\IBr(Z)$ be faithful. Then we have
  \begin{enumerate}
    \item  There exists an $\big(\hH\times \Aut(K)\big)_{\lambda}$-equivariant bijection \[\Omega:\IBr(K\which\lambda)\ra \W^{\circ}(K\which \lambda)/\sim_K.\]
    \item  Fix any $\eta\in\IBr(K\which \lambda)$ and $(Q,\delta)\in\Omega(\eta).$ Suppose that $K$ is normally embedded into a finite group $T$ and that $\eta$ is $T$-invariant. Then $\delta$ is $\N_T(Q)$-invariant and there exists an $\big(\hH\times\Aut(T)\big)_{\eta,Q}$-equivariant bijection $$\varDelta:\IBr(T\which \eta)\ra \IBr(\N_T(Q)\which \delta).$$
  \end{enumerate}
\end{thm}
\begin{proof}
  If $K/Z$ is a $p'$-group, then $\Rad(K)=\{1\}$ and all the conditions listed in the theorem are trivially satisfied.

  Now suppose that $K/Z$ is a direct product of isomorphic non-abelian simple groups for which the inductive NAW condition holds, thus $Z=\ZZ(K).$
  Let $K_1$ be the commutator subgroup of $K$, and set $Z_1 = Z \cap K_1 = \ZZ(K_1)$. Since $K/Z$ is perfect, we have $K = K_1Z$ and $K_1$ is perfect.
  Let $\lambda_1=\lambda_{Z_1}.$ By \cite[Lemma 2.2]{Nav1}, there is a bijection
\[
   \IBr(K_1\which \lambda_1)\ra\IBr(K\which \lambda),\quad \theta\mapsto\theta\cdot\lambda.
\]
  Since $K_1/Z_1\cong K/Z$ is a direct product of isomorphic non-abelian simple groups for which the inductive NAW condition holds, Lemma \ref{To_perfectp'exten} applies to $K_1$. 
  Consequently, there exists an $\hH\times\Aut(K_1)$-equivariant bijection \[\Omega_1:\IBr(K_1)\ra\W^{\circ}(K_1)/\sim_{K_1},\]
  such that for any $\theta\in\IBr(K_1)$ and $(Q,\varphi)\in\Omega_1(\theta),$ we have
  \begin{equation}\label{equ:new6}
    \big( (K_1\semi \Aut(K_1))_{\theta^{\hH}},K_1,\theta \big)_{\hH} \geqslant_{c}  
  \big( \N_{K_1\semi \Aut(K_1)}(Q)_{\varphi^{\hH}},\N_{K_1}(Q),\varphi \big)_{\hH}. 
  \end{equation} 
 Note that $\Rad(K_1)=\Rad(K).$

  For any $Q \in \Rad(K)$, note that $\N_K(Q) = \N_{K_1}(Q)Z$. Moreover, by \cite[Lemma 2.2]{Nav1}, there is a bijection
\[
\IBr(\N_{K_1}(Q) \which \lambda_1) \ra \IBr(\N_K(Q) \which \lambda), \quad \varphi \mapsto \varphi \cdot \lambda,
\]
and via the natural bijection (\ref{equ:dz-bij-vert.}), one verifies that $(Q,\varphi) \in \W^{\circ}(K_1)$ if and only if $(Q,\varphi \cdot \lambda) \in \W^{\circ}(K)$.
We define
\[
\Omega: \IBr(K \which \lambda) \ra \W^{\circ}(K \mid \lambda), \quad
\theta \cdot \lambda \longmapsto (Q,\varphi \cdot \lambda)_{\sim_K},
\]
where $\theta \in \IBr(K_1 \which \lambda_1)$ and $(Q,\varphi) \in \Omega_1(\theta)$.
It is straightforward to verify that $\Omega$ is an $\big(\hH \times \Aut(K)\big)_{\lambda}$-equivariant bijection.

Following the notation in condition (2), write $\eta = \theta \cdot \lambda$ and $\delta = \varphi \cdot \lambda$, where $(Q,\varphi) \in \Omega_1(\theta)$.  
Set $A = T \rtimes \Aut(T)_K$. Then $K_1 \trianglelefteq A$, and by the equivariance of $\Omega_1$ we have  
\[
A_{\theta^{\hH}} = K_1 \N_A(Q)_{\varphi^{\hH}} \quad \text{and} \quad
\big( \hH \times \N_A(Q) \big)_{\theta} = \big( \hH \times \N_A(Q) \big)_{\varphi},
\]  
as noted in Remark \ref{themethod}. 
Applying Theorem \ref{thm:H-triA} to (\ref{equ:new6}) and the $\hH$-triples $(A_{\theta^{\hH}}, K_1, \theta)_{\hH}$ and $(\N_A(Q)_{\varphi^{\hH}}, \N_{K_1}(Q), \varphi)_{\hH}$, we obtain  
\[
(A_{\theta^{\hH}}, K_1, \theta)_{\hH} \geqslant_c (\N_A(Q)_{\varphi^{\hH}}, \N_{K_1}(Q), \varphi)_{\hH}.
\]  
By Theorem \ref{thm:H-triB}, there exists an $\big(\hH \times \N_A(Q)\big)_{\theta}$-equivariant bijection  
\[
\tilde\Delta : \IBr(T \which \theta) \ra \IBr(\N_T(Q) \which \varphi)
\]  
such that $\IBr(Z \which\chi) = \IBr(Z \which \Delta(\chi))$ for all $\chi \in \IBr(T \which \theta)$. Note that $T\zg A.$ 
Since $\IBr(T \which \eta) = \IBr(T \which \theta) \cap \IBr(T \which \lambda)$ and $\IBr(\N_T(Q) \which \delta) = \IBr(\N_T(Q) \which \varphi) \cap \IBr(T \which \lambda)$, the bijection $\tilde\Delta$ restricts to an $\big(\hH \times \N_A(Q)\big)_{\eta}$-equivariant bijection  
\[
\Delta : \IBr(T \which \eta) \ra \IBr(\N_T(Q) \which \delta).
\]  
As $\big(\hH \times \N_A(Q)\big)_{\eta}$ induces the same automorphisms as $\big(\hH \times \Aut(T)\big)_{\eta, Q}$ on characters of $T$ and $\N_T(Q)$, the proof of the theorem is complete. 
\end{proof}


\section{The reduction}\label{Sec_red}
In this Section we are going to prove Theorem \ref{Redution}, which is a generalization of Theorem~\ref{thm-reduction}. Before  doing this, we give a lemma that is needed there.

\begin{lem}\label{thm_6.03}
  Let $Z$ be a normal subgroup of $G,$ and $\lambda\in\dz(Z)$ be $G$-invariant. If $S/Z$ is a $p$-subgroup of $G/Z$ and $\IBr(\N_G(S)\which\lambda^{\circ},|S/Z|)$ is non-empty, then $S/Z\in\Rad(G/Z).$
\end{lem}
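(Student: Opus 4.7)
The plan is to identify $S/Z$ with $\bO_p(\N_G(S)/Z)$ via a vertex-order comparison. Let $M\leqslant\N_G(S)$ be the subgroup with $Z\leqslant M$ and $M/Z=\bO_p(\N_G(S)/Z)$; then $S\leqslant M\zg\N_G(S)$ with $M/Z$ a $p$-group. It suffices to show $|M/Z|\leqslant|S/Z|$, since $S\leqslant M$ gives the reverse inequality.

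The key input is that $\IBr(M\which\lambda)$ is a singleton $\{\psi\}$, and its element has a vertex of order $|M/Z|$. Indeed, because $\lambda\in\dz(Z)$ is $M$-invariant and $M/Z$ is a $p$-group, the obstruction to extending $\lambda$ to an irreducible Brauer character of $M$ lies in $H^2(M/Z,F^{\times})$, which vanishes as $F^{\times}$ is uniquely $p$-divisible and $p$-torsion free; any two extensions differ by an element of $\IBr(M/Z)=\{1_{M/Z}\}$, so the extension $\psi$ is unique. The unique block $B$ of $M$ covering $\bl(\lambda)$ has defect group $D$ with $M=DZ$ and $|D|=|M/Z|$, and $\IBr(B)=\IBr(M\which\lambda)=\{\psi\}$ (so $l(B)=1$) forces the vertex of $\psi$ to coincide with $D$. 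A transparent shortcut is to invoke Theorem~\ref{thm:centralize} to reduce to central $p'$-cyclic $Z$, in which case $M=Z\times D$ splits as a direct product and $\psi=\lambda\times 1_D$ is manifestly the unique extension with vertex $D$.

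Now fix $\varphi\in\IBr(\N_G(S)\which\lambda,|S/Z|)$ with vertex $Q$ of order $|S/Z|$, and let $U$ be the simple $F\N_G(S)$-module affording $\varphi$. Since $\lambda$ is $G$-invariant, the uniqueness of $\psi$ makes it $\N_G(S)$-invariant, so Clifford's theorem yields $U_M\cong e\cdot W$, where $W$ is the simple $FM$-module affording $\psi$. In particular $W$ is an indecomposable direct summand of $U_M$, and Green's theorem on vertices under restriction to $M\zg\N_G(S)$ places the vertex of $W$ inside $M\cap Q^{g}$ for some $g\in\N_G(S)$ (up to $M$-conjugacy). This gives $|M/Z|=|D|\leqslant|M\cap Q^{g}|\leqslant|Q|=|S/Z|$, yielding $S=M$ and hence $S/Z=\bO_p(\N_{G/Z}(S/Z))\in\Rad(G/Z)$.

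The main technical hurdle is the first step—the identification of $\IBr(M\which\lambda)$ as a singleton and the determination of the vertex of $\psi$. Once this is in hand, the rest is a clean combination of Clifford's theorem and Green's vertex control on restriction to a normal subgroup.
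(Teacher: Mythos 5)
Your argument is correct, but it reaches the conclusion by a different mechanism than the paper. The paper's proof is very short: by Theorem~\ref{thm:centralize} and the standard reduction one may assume $Z$ is a central cyclic $p'$-subgroup; then $S=Z\times S_p$ with $S_p$ the unique Sylow $p$-subgroup of $S$, $\N_G(S)=\N_G(S_p)$, and any $\varphi\in\IBr(\N_G(S_p)\which\lambda,|S_p|)$ has a vertex which contains $\bO_p(\N_G(S_p))\supseteq S_p$ by Lemma~\ref{thm_6.02}(2) and has order $|S_p|$, forcing $\bO_p(\N_G(S_p))=S_p$, i.e. $S_p\in\Rad(G)$; Lemma~\ref{thm_6.02}(1) then transfers this to $G/Z$. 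You instead stay with a general normal $Z$, pass to the preimage $M$ of $\bO_p(\N_G(S)/Z)$, show $\IBr(M\which\lambda)$ is a single character $\psi$ with vertex of order $|M/Z|$, and then combine Clifford restriction of $U$ to $M$ with the Mackey/Green control of vertices of summands of $U_M$ to get $|M/Z|\leqslant |Q|=|S/Z|$; in effect your Clifford--Green step re-derives, without centralizing the whole statement, the inequality that the paper obtains for free from ``normal $p$-subgroups lie in every vertex'' after centralizing. Both routes are sound: the paper's is shorter, while yours makes explicit, over a general $Z$, why a $p$-group of order $|M/Z|$ must sit inside a vertex of $\varphi$. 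One caution: your primary justification of the key input, namely that ``$\IBr(B)=\{\psi\}$ forces the vertex of $\psi$ to coincide with $D$,'' is not a standard general principle and should not be cited as such; what makes it true here is the specific structure of $B=e_\lambda FM$ (a matrix algebra over a twisted group algebra of the $p$-group $M/Z$ with trivializable cocycle, so that the vertex can be pinned down, e.g. by a dimension/defect-group argument), or, more simply, exactly the shortcut you mention: after reducing $(M,Z,\lambda)$ by Theorem~\ref{thm:centralize} one has $M=Z\times D$, $\psi=\lambda\times 1_D$, and Lemma~\ref{thm_6.02}(2) gives the vertex. Keep that shortcut (or the direct block computation) as the actual proof of the claim, and your argument is complete.
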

\begin{proof}
By Theorem \ref{thm:centralize} and a standard reduction we may assume that $Z$ is a  $p'$-cyclic central subgroup of $G.$ Let $S_p$ be the unique Sylow $p$-subgroup of $S.$ Then $\N_G(S)=\N_G(S_p)$ and $\IBr(\N_G(S)\which\lambda^{\circ},|S/Z|)=\IBr(\N_G(S_p) \which\lambda^{\circ},|S_p|).$ Since $\IBr(\N_G(S_p)\which\lambda^{\circ},|S_p|)$ is non-empty, we have that $S_p\in\Rad(G)$ and thus $S/Z\in\Rad(G/Z).$
\end{proof}

\begin{thm}\label{Redution}
 Let $Z\zg G$ be finite groups. Let $\Inn(G)\leqslant \mathscr{A} \leqslant\big(\hH\times\Aut(G)\big)_Z,$ and let $\hat\lambda\in\dz(Z)$ be $ \mathscr{A} $-invariant. Set $\lambda=\hat{\lambda}^{\circ}.$ 
 Let $$\rw^{\circ}(G\which \lambda)=\Big\{(S,\varrho)\,\Big|\, S/Z\in\Rad\big(G/Z\big),\varrho\in\IBr(\N_G(S)\which\lambda,|S/Z|) \Big\}.$$
  Assume that the inductive NAW condition holds for every non-abelian simple group involved in $G/Z$ and of order divisible by $p$. Then there is an $ \mathscr{A} $-equivariant bijection $$f:\IBr(G\which\lambda)\ra \rw^{\circ}(G\which \lambda)/\sim_G.$$ 
\end{thm}
\begin{proof}
We will prove this theorem by induction on the order of the group $G/Z.$
By Theorem \ref{thm:centralize} and a standard reduction we may assume that $Z$ is a $p'$-central subgroup of $G$ and $\lambda$ is a faithful linear character.
Since there exists a bijection $\Rad(G)\ra\Rad(G/Z),S\mapsto SZ/Z,$ we can identify $\rw^{\circ}(G\which \lambda)$ with the set $$\W^{\circ}(G\which \lambda)=\{(S,\varrho)\,|\, S\in\Rad\big(G\big),\varrho\in\IBr(\N_G(S)\which\lambda,|S|) \}.$$ 
Throughout the proof, we only need to consider the set $\W^{\circ}(G\which \lambda)$, which means that we need to construct an $ \mathscr{A} $-equivariant bijection  $$f:\IBr(G\which\lambda)\ra \W^{\circ}(G\which \lambda)/\sim_G.$$

  Since $Z$ is $ \mathscr{A} $-invariant, $ \mathscr{A} $ acts on the subgroups of $G/Z.$ Let $K/Z$ be any minimal $ \mathscr{A} $-invariant subgroup of $G/Z$. As $K/Z$ is characteristically simple, it is a direct product of isomorphic simple groups. Note that $K$ is normal in $G,$ because $\Inn(G)\leqslant  \mathscr{A} .$ 

  {\itshape Step 1.} We may assume that $K/Z$ is not a $p$-group. Thus $\bO_p(G)=1.$ 

  Suppose that $K/Z$ is a $p$-group. Then  $K=K_p\times Z,$ where $K_p$ is the unique Sylow $p$-subgroup of $K.$ Note that $K_p$ is an $ \mathscr{A} $-invariant $p$-subgroup of $G.$ 
  Let $\bar{G}=G/K_p,$ and $\bar{Z}=K/K_p.$ Let $\bar{\lambda}=\lambda\times 1_{K_p}\in\IBr(\bar{Z}).$
  Define a group homomorphism $\omega: \mathscr{A} \ra \hH\times\Aut(\bar{G}),(\sigma,\phi)\ra (\sigma,\bar{\phi}),$ where $\bar{\phi}$ is the quotient of $\phi$ to  $\bar{G}.$ 
  Observe that $\omega(\Inn(G))=\Inn(\bar{G})$ and that $\bar{\lambda}$ is $\omega( \mathscr{A} )$-invariant.
  We aim to apply induction to $(\bar{G},\bar{Z},\bar{\lambda},\omega( \mathscr{A} )).$ 
  There is a natural bijection $\varDelta_1:\IBr(G\which\lambda)\ra\IBr(\bar{G}\which \bar{\lambda}),\varphi\mapsto\bar{\varphi},$ and by Lemma \ref{thm_6.02}(2) a natural bijection $\varDelta_2:\W^{\circ}(G\which \lambda)\ra\W^{\circ}(\bar{G}\which\bar{\lambda}), (S,\varrho)\mapsto(\bar{S},\bar{\varrho}).$ 
  Moreover, for $i\in\{1,2\},$ the map $\varDelta_i$ is $ \mathscr{A} $-equivariant, where $ \mathscr{A} $ acts via $\omega$ on the sets $\IBr(\bar G\which\bar\lambda)$ and $\W^{\circ}(\bar G\which
  \bar\lambda).$ 
  Since $|\bar{G}/\bar{Z}|<|G/Z|,$ by induction there exists an $\omega( \mathscr{A} )$-equivariant bijection $\IBr(\bar{G}\which \bar{\lambda})\ra \W^{\circ}(\bar{G}\which \bar{\lambda})/\sim_{\bar G}.$ 
   Composing with the bijections $\varDelta_1$ and $\varDelta_2$ yields the desired bijection $f.$  
If $\bO_p(G) \neq 1$, we may choose $K/Z \subseteq \bO_p(G)Z/Z$ to be a $p$-group. This completes the proof of this step.

Now we have that either $K/Z$ is a direct product of isomorphic non-abelian simple groups for which the inductive NAW condition holds, or $K/Z$ is a $p'$-group. Applying Theorem \ref{To_p'exten}, we obtain an $\mathscr{A}$-equivariant bijection
\[
\Omega: \IBr(K \which \lambda) \ra \W^{\circ}(K \which \lambda)/\sim_K
\]
such that for any $\eta \in \IBr(K \which \lambda)$ and $(Q,\delta) \in \Omega(\eta)$, there exists an $\mathscr{A}_{\eta,Q}$-equivariant bijection
\[
\varDelta^{(\eta,Q)}: \IBr(G_{\eta} \which \eta) \ra \IBr(\N_{G_{\eta}}(Q) \which \delta).
\]
Assume that the maps $\varDelta^{(\eta,Q)}$, as $(\eta,Q)$ varies, commute with the action of $\mathscr{A}$; that is,
\[
\varDelta^{(\eta,Q)}(\eta)^a = \varDelta^{(\eta^a, Q^a)}(\eta^a)
\]
for any pair $(\eta,Q)$ and any $a \in \mathscr{A}$.

  Let $$\textbf{A}=\big\{(\eta,\varphi)\,\big|\, \eta\in\IBr(K\which\lambda), \varphi\in\IBr(G_{\eta}\which\eta)\big\},$$
Then $ \mathscr{A} $ acts on $\textbf{A}$ in a natural way by $(\eta,\varphi)^a=(\eta^a,\varphi^a)$ for $a\in \mathscr{A} .$
We prove that the map
\[
f_1: \IBr(G \which \lambda) \ra \textbf{A}/\sim_G, \quad
\psi \mapsto (\eta,  \varphi)_{\sim_G}
\]
is well-defined and is an $\mathscr{A}$-equivariant bijection, where $\eta \in \IBr(K \which \lambda)$ is an irreducible constituent of the restriction of $\psi$ to $K$, and $\varphi$ is the Clifford correspondent of $\psi$ in $G_{\eta}$ (i.e., $\psi = \varphi^G$).
Suppose that $(\eta_1, \varphi_1) \in \textbf{A}$ also corresponds to $\psi$. Since $\psi$ lies over both $\eta$ and $\eta_1$, we see that $\eta$ is $G$-conjugate to $\eta_1$. 
Replacing $(\eta_1, \varphi_1)$ by a $G$-conjugate if necessary, we may assume that $\eta = \eta_1$.
 Then by the Clifford correspondence, we have $\varphi = \varphi_1$, and hence the map $f_1$ is well-defined. 
The $\mathscr{A}$-equivariance and the bijectivity of $f_1$ both follow from the relation $\psi = \varphi^G$.

Let
\[
\textbf{B} = \big\{ (\eta, Q, \varphi') \;\big|\; \eta \in \IBr(K \which \lambda),\; (Q,\delta) \in \Omega(\eta),\; \varphi' \in \IBr(U_{\eta,Q} \which \delta) \big\},
\]
where we set $U_{\eta,Q}=\N_{G_{\eta}}(Q),$ 
and let $\mathscr{A}$ act on $\textbf{B}$ in a natural way.
We prove that the map
\[f_2:\textbf{A}/\sim_G\ra \textbf{B}/\sim_G,(\eta,\varphi)_{\sim_G} \mapsto\big(\eta,Q,\varDelta^{(\eta,Q)}(\varphi)\big)_{\sim_G}\]
is well-defined and is an $\mathscr{A}$-equivariant bijection, where $Q\in\Rad(K)$ satisfies $(Q,\delta)\in\Omega(\eta).$
Let $(\eta_1,\varphi_1)\in\textbf{A}$ be another element such that $(\eta_1,\varphi_1)=(\eta^g,\varphi^g)$ for some $g\in G$, and let $(Q_1,\delta_1)\in\Omega(\eta_1).$ 
To show  $f_2$ is well-defined, we need to prove that $\big(\eta_1,Q_1,\Delta^{(\eta_1,Q_1)}(\varphi_1)\big)$ is $G$-conjugate to $\big(\eta,Q,\Delta^{(\eta,Q)}(\varphi)\big)$.
Since $\eta_1=\eta^g$, we have $(Q^g,\delta^g)\in\Omega(\eta_1)$ by the equivariance of $\Omega,$ thus $Q_1=Q^{gk}$ for some $k\in K.$
Then $\big(\eta,Q,\Delta^{(\eta,Q)}(\varphi)\big)^{gk}=
\big(\eta_1,Q_1,\Delta^{(\eta_1,Q_1)}(\varphi_1)\big)$.
It follows by direct computation that $f_2$ is an $\mathscr{A}$-equivariant surjection.
Let $(\eta_2,\varphi_2)\in\textbf{A},(Q_2,\delta_2)\in\Omega(\eta_2),$ and suppose that $\big(\eta_2,Q_2,\varDelta^{(\eta_2,Q_2)}(\varphi_2)\big)$ is $G$-conjugate to $\big(\eta,Q,\varDelta^{(\eta,Q)}(\varphi)\big)$.
For the injectivity of $f_2$, we need to show that $(\eta_2, \varphi_2)$ is $G$-conjugate to $(\eta, \varphi)$.  
Since we have already established the equivariance of $f_2$, we may assume that $\eta_2 = \eta$ and $Q_2 = Q$.  
Then $\Delta^{(\eta, Q)}(\varphi_2)$ is $U_{\eta, Q}$-conjugate to $\Delta^{(\eta, Q)}(\varphi)$, and hence they are equal.  
Since $\Delta^{(\eta, Q)}$ is a bijection, it follows that $\varphi_2 = \varphi$.

For any fixed $\eta\in\IBr(K\which\lambda)$ and $(Q,\delta)\in\Omega(\eta),$
since $|U_{\eta,Q}/\N_K(Q)|=|G_{\eta}/K|<|G/Z|$, we can apply induction to $(U_{\eta,Q}/Q,\N_K(Q)/Q, \hat{\bar\delta}, \mathscr{A} _{\eta,Q}).$
Here $\bar\delta\in \IBr(\N_K(Q)/Q)$ is the deflation of $\delta$ and $\hat{\bar\delta}\in\dz(\N_K(Q)/Q)$ is the corresponding character of $\bar{\delta}$ (i.e., $\hat{\bar\delta}^{\circ} = \bar\delta$; see \eqref{equ:dz-bij-vert.}).
By induction there exists an $ \mathscr{A} _{\eta,Q}$-equivariant bijection
\[\bar f_3^{(\eta,Q)}:\IBr(U_{\eta,Q}/Q\which \bar\delta)\ra \rw^{\circ}(U_{\eta,Q}/Q\which \bar\delta)/\sim_{U_{\eta,Q}/Q}.\]
Assume that the maps $\bar f_3^{(\eta,Q)}$, as $(\eta,Q)$ varies, commute with the action of $\mathscr{A}$.
Let
  \begin{align*}
    \widetilde{\rw^{\circ}}(U_{\eta,Q}/Q\which \bar\delta)=&\big\{  (E,\gamma)\,\big|\,E/\N_K(Q)\in\Rad(U_{\eta,Q}/\N_K(Q)),\\
     &\gamma\in\IBr(\N_{U_{\eta,Q}}(E)\which\delta,|E/\N_K(Q)| |Q|) \big\}.
  \end{align*}
By Lemma \ref{thm_6.02}(2), if $(E,\gamma) \in \widetilde{\rw^{\circ}}(U_{\eta,Q}/Q\which \bar\delta)$, then $(E/Q, \bar\gamma) \in \rw^{\circ}(U_{\eta,Q}/Q\which \bar\delta)$, where $\bar\gamma \in \IBr(\N_{U_{\eta,Q}/Q}(E/Q))$ denotes the deflation of $\gamma$. Note that $\N_{U_{\eta,Q}/Q}(E/Q) = \N_{U_{\eta,Q}}(E)/Q$.
We have that the map $(E,\gamma) \mapsto (E/Q, \bar\gamma)$ induces a natural bijection
\[
\widetilde{\rw^{\circ}}(U_{\eta,Q}/Q\which \bar\delta) \ra \rw^{\circ}(U_{\eta,Q}/Q\which \bar\delta).
\]  
Together with the natural bijection $\IBr(U_{\eta,Q} \which \delta) \ra \IBr(U_{\eta,Q}/Q \which \bar\delta)$, the map $\bar f_3^{(\eta,Q)}$ induces an $\mathscr{A}_{\eta,Q}$-equivariant bijection
\[
f_3^{(\eta,Q)}: \IBr(U_{\eta,Q} \which \delta) \ra \widetilde{\rw^{\circ}}(U_{\eta,Q}/Q\which \bar\delta)/\sim_{U_{\eta,Q}}.
\]
Moreover, the maps $ f_3^{(\eta,Q)}$, as $(\eta,Q)$ varies, commute with the action of $\mathscr{A}$.

  Let
  \begin{align*}
      \textbf{C}=\big\{ (\eta,Q,E,\gamma)  \,\big|\, &\eta\in\IBr(K\which\lambda), (Q,\delta)\in\Omega(\eta),\\
      &E/\N_K(Q)\in\Rad(U_{\eta,Q}/\N_K(Q)),\\
      &\gamma\in \IBr(\N_{U_{\eta,Q}}(E)\which\delta,|E/\N_K(Q)||Q|)  \big\},
  \end{align*} 
  and let $\mathscr{A}$ act on $\textbf{C}$ in a natural way.
 We prove that the map \[f_3:\textbf{B}/\sim_G\to\textbf{C}/\sim_G,\quad (\eta,Q,\varphi')_{\sim_G}\mapsto (\eta,Q,E,\gamma)_{\sim_G},\]  where $(E,\gamma)\in  f_3^{(\eta,Q)}(\varphi'),$ is well-defined and is an $\mathscr{A}$-equivariant bijection.
 Let $(\eta_1, Q_1, \varphi'_1) \in \textbf{B}$ be another element such that $(\eta_1, Q_1, \varphi'_1) = (\eta, Q, \varphi')^g$ for some $g \in G$, and let $(E_1, \gamma_1) \in f_3^{(\eta_1, Q_1)}(\varphi'_1)$.
By the equivariance of $f_3^{(\eta,Q)}$, we have $(E^g, \gamma^g) \in f_3^{(\eta_1, Q_1)}(\varphi'_1)$. Hence $(E_1, \gamma_1) = (E^g, \gamma^g)^u$ for some $u \in U_{\eta_1, Q_1}$.
Then $(\eta, Q, E, \gamma)^{gu} = (\eta_1, Q_1, E_1, \gamma_1)$, and therefore $f_3$ is well-defined.
It follows by direct computation that $f_3$ is an $\mathscr{A}$-equivariant surjection.
Let $(\eta_2, Q_2, \varphi'_2) \in \textbf{B}$ and $(E_2, \gamma_2) \in f_3^{(\eta_2, Q_2)}(\varphi'_2)$, and suppose that $(\eta_2, Q_2, E_2, \gamma_2)$ is $G$-conjugate to $(\eta, Q, E, \gamma)$. 
For the injectivity of $f_3$ we need to prove that $(\eta_2,Q_2,\varphi'_2)$ is $G$-conjugate to $(\eta,Q,\varphi').$
Since we have already established the equivariance of $f_3$, we may assume that $\eta_2 = \eta$ and $Q_2 = Q$. Then $(E_2, \gamma_2)$ is $U_{\eta, Q}$-conjugate to $(E, \gamma)$, and hence $\varphi'_2 = \varphi'$ because $f_3^{(\eta, Q)}$ is a bijection.

  Fix any  $\eta\in\IBr(K\which \lambda),(Q,\delta)\in\Omega(\eta)$ and $E/\N_K(Q)\in \Rad(U_{\eta,Q }/\N_K(Q)).$  
  Denote by $\bar\delta \in \IBr(\N_K(Q)/Q)$ the deflation of $\delta$, and by $\hat{\bar\delta} \in \dz(\N_K(Q)/Q)$ the corresponding character of $\bar{\delta}$ (i.e., $\hat{\bar\delta}^{\circ} = \bar\delta$; see \eqref{equ:dz-bij-vert.}).
 Let $F/Q$ be a defect group of the unique block of $E/Q$ covering the defect zero block $\bl(\bar\delta)$ of $\N_K(Q)/Q,$  and  let $C_F/Q=\C_{\N_K(Q)/Q}(F/Q).$
  Let $(\hat{\bar \delta})^{\star_{F/Q}}\in\dz(C_F/Q)$ be the DGN correspondent of $\hat{\bar\delta}$ with respect to $F/Q,$
  and set $\bar\delta^{\star_{F/Q}}=( (\hat{\bar \delta})^{\star_{F/Q}} )^{\circ}.$  
  Since $E=F\N_K(Q),$ we obtain that $\N_{U_{\eta,Q}}(F)\leqslant \N_{U_{\eta,Q}}(E),$ hence $\N_{U_{\eta,Q}}(F)=\N_{\N_{U_{\eta,Q}}(E)}(F).$
  Note that $E/Q\big/\N_K(Q)/Q$ is a normal $p$-subgroup of  $\N_{U_{\eta,Q}}(E)/Q\big/\N_K(Q)/Q$ and $\bar\delta$ is $\N_{U_{\eta,Q}}(E)/Q$-invariant. 
Thus, applying Theorem \ref{Thm:DGN}, with $(G,N,D,\theta)$ in that theorem replaced by $(\N_{U_{\eta,Q}}(E)/Q,\; \N_K(Q)/Q,\; F/Q,\; \hat{\bar\delta})$ in our setting, we obtain an $\mathscr{A}_{\eta,Q,E,F}$-equivariant bijection \[\bar f_4^{(\eta,Q,E,F)}: \IBr\big(\N_{U_{\eta,Q}}(E)/Q\bwhich\bar\delta,|F/Q|\big)\ra \IBr\big(\N_{U_{\eta,Q}}(F)/Q\bwhich \bar\delta^{\star_{F/Q}}, |F/Q|\big) .\]
  Assume that the maps $\bar f_4^{(\eta,Q,E,F)}$, as $(\eta,Q,E,F)$ varies, commute with the action of $\mathscr{A}$.
  Since deflation of characters induces natural bijections
\[
\IBr\big(\N_{U_{\eta,Q}}(E) \which \delta, |F|\big)
\ra \IBr\big(\N_{U_{\eta,Q}}(E)/Q \which \bar\delta, |F/Q|\big)
\]
and
\[
\IBr\big(\N_{U_{\eta,Q}}(F) \which \delta^{\star_{F}}, |F|\big)
\ra \IBr\big(\N_{U_{\eta,Q}}(F)/Q \which \bar\delta^{\star_{F/Q}}, |F/Q|\big)
\]
by Lemma \ref{thm_6.02}(2),
where $\delta^{\star_{F}} \in \IBr(C_F)$ is the inflation of $\bar\delta^{\star_{F/Q}}$,  the bijection $\bar f_4^{(\eta,Q,E,F)}$ induces an $\mathscr{A}_{\eta,Q,E,F}$-equivariant bijection
\[
f_4^{(\eta,Q,E,F)}: \IBr\big(\N_{U_{\eta,Q}}(E) \which \delta, |F|\big)
\ra \IBr\big(\N_{U_{\eta,Q}}(F) \which \delta^{\star_{F}}, |F|\big).
\]
Moreover, the maps $f_4^{(\eta,Q,E,F)}$, as $(\eta,Q,E,F)$ varies, commute with the action of $\mathscr{A}$.

Let
  \begin{align*}
    \textbf{D}=\big\{  (\eta,Q,E,F,\zeta) \,\big|\,&\eta\in\IBr(K\which\lambda), (Q,\delta)\in\Omega(\eta),E/\N_K(Q)\in\Rad(U_{\eta,Q}/\N_K(Q)), \\
     &F/Q  \text{ is a defect group of the unique block of } E/Q  \text{ covering } \bl(\bar\delta), \\
     &\zeta\in\IBr\big(\N_{U_{\eta,Q}}(F) \which \delta^{\star_{F}}, |F|\big)  \big\},
  \end{align*}
  and let $\mathscr{A}$ act on $\textbf{D}$ in a natural way.
 We prove that the map \[f_4:\textbf{C}/\sim_G\to\textbf{D}/\sim_G,\quad (\eta,Q,E,\gamma)_{\sim_G}\mapsto (\eta,Q,E,F,f_4^{(\eta,Q,E,F)}(\gamma))_{\sim_G},\]  where $(Q,\delta)\in \Omega(\eta)$ and $F/Q$  is a defect group of the unique block of $ E/Q$  covering $\bl(\bar\delta)$,  is well-defined and is an $\mathscr{A}$-equivariant bijection.
 Let $(\eta_1, Q_1, E_1,\gamma_1) \in \textbf{C}$ be another element such that $(\eta_1, Q_1, E_1,\gamma_1) = (\eta, Q, E,\gamma)^g$ for some $g \in G$, and let $(Q_1,\delta_1)\in \Omega(\eta_1)$ and $F_1/Q_1$  be a defect group of the unique block of $ E_1/Q_1$ covering $\bl(\bar\delta_1),$ where $\bar\delta_1\in\IBr(\N_K(Q_1)/Q_1)$ is the deflation of $\delta_1.$
Since $F^g/Q_1$ is also a defect group of the unique block of $ E_1/Q_1$ covering $\bl(\bar\delta_1),$ we have $F_1=F^{gx}$ for some $x\in E_1.$
Then $(\eta, Q, E, F, f_4^{(\eta,Q,E,F)}(\gamma))^{gx} =(\eta_1, Q_1, E_1, F_1, f_4^{(\eta_1,Q_1,E_1,F_1)}(\gamma_1))$, and therefore $f_4$ is well-defined.
It follows by direct computation that $f_4$ is an $\mathscr{A}$-equivariant surjection.
Let $(\eta_2, Q_2, E_2,\gamma_2) \in \textbf{C},(Q_2,\delta_2)\in \Omega(\eta_2)$, and $F_2/Q_2$  be a defect group of the unique block of $ E_2/Q_2$ covering $\bl(\bar\delta_2).$ 
Suppose that $(\eta_2, Q_2, E_2, F_2,f_4^{(\eta_2,Q_2,E_2,F_2)}(\gamma_2))$ is $G$-conjugate to $(\eta,Q,E,F,f_4^{(\eta,Q,E,F)}(\gamma)).$
For the injectivity of $f_4$ we need to prove that $(\eta_2, Q_2, E_2,\gamma_2)$ is $G$-conjugate to $(\eta,Q,E,\gamma).$
Since we have already established the equivariance of $f_4$, we may assume that $\eta_2 = \eta, Q_2 = Q$ and $E_2=E$. 
Then $(F_2, f_4^{(\eta_2,Q_2,E_2,F_2)}(\gamma_2))$ is $\N_{U_{\eta, Q}}(E)$-conjugate to $(F,f_4^{(\eta,Q,E,F)}(\gamma))$, and hence $\gamma_2 = \gamma$ because $f_4^{(\eta, Q,E,F)}$ is a bijection.

  Fix any $(\eta,Q,E,F)$ such that $\eta\in\IBr(K\which \lambda), (Q,\delta)\in\Omega(\eta), E/\N_K(Q)\in\Rad(U_{\eta,Q}/\N_K(Q))$ and $F/Q$ a defect group of the unique block of $E/Q$ covering $\bl(\bar\delta).$
  Since $F\cap K=Q,$ we see $\N_G(F)\leqslant \N_G(Q),$ thus $C_F\zg \N_G(F).$ 
  In fact, $\N_{U_{\eta,Q}}(F)$ is the stabilizer of $\delta^{\star_F}$ in $\N_G(F).$ Indeed, $\delta^{\star_F}$ is $\N_{U_{\eta,Q}}(F)$-invariant and if $x\in\N_G(F)\leqslant\N_G(Q)$ stabilizes $\delta^{\star_F},$ then $x$ stabilizes $\eta,$ thus $x\in U_{\eta,Q}.$ By Clifford correspondence, we have that induction defines a bijection
  $$\IBr\big( \N_{U_{\eta,Q}}(F) \,\big|\,\delta^{\star_F},|F| \big)\to \IBr\big( \N_G(F) \,\big|\,\delta^{\star_F},|F| \big)  \big).$$
  
  Let
  \begin{align*}
    \textbf{E}=\big\{  (\eta,Q,E,F,\chi) \,\big|\, & \eta\in\IBr(K\which\lambda),(Q,\delta)\in\Omega(\eta), \\
    &E/\N_K(Q)\in\Rad(U_{\eta,Q}/\N_K(Q)),\\
    &F/Q \text{ is a defect group of the unique block of } E/Q \text{ covering } \bl(\bar\delta), \\
     &\chi\in\IBr\big( \N_G(F) \,\big|\,\delta^{\star_F},|F| \big)   \big\},
  \end{align*}
 and let $ \mathscr{A} $ act on $\textbf{E}$ in a natural way. 
 Then the $ \mathscr{A} $-equivariant bijection 
  \[\tilde f_5:\ \textbf{D}\ \to \ \textbf{E},\quad (\eta,Q,E,F,\zeta)\mapsto(\eta,Q,E,F,\zeta^{\N_G(F)}),\] 
 induces an $\mathscr{A} $-equivariant bijection 
  \[ f_5: \textbf{D}/\sim_G \ra \textbf{E}/\sim_G,\quad (\eta,Q,E,F,\zeta)_{\sim_G} \mapsto(\eta,Q,E,F,\zeta^{\N_G(F)})_{\sim_G}.\]

Recall that
\[
\W^{\circ}(G\which \lambda) = \Big\{ (S,\varrho) \;\Big|\; S \in \Rad(G),\; \varrho \in \IBr(\N_G(S) \which \lambda, |S|) \Big\}.
\]
Let $(\eta, Q, E, F, \chi) \in \textbf{E}$. Since $\IBr\big( \N_G(F) \,\big|\, \delta^{\star_F}, |F| \big)$ is nonempty, we have $F \in \Rad(G)$.
  Thus $(F,\chi)\in \W^{\circ}(G\which \lambda),$ and the map
  $$f_6:\ \textbf{E}/\sim_G\ \to \ \W^{\circ}(G\which \lambda)_{\sim_G},\quad (\eta,Q,E,F,\chi)_{\sim_G}\mapsto (F,\chi)_{\sim_G}$$
  is  well-defined. 
  In the following we will prove that $f_6$ is an  $ \mathscr{A} $-equivariant bijection.
   Consequently, letting
\[
f = f_6 f_5 f_4 f_3 f_2 f_1: \IBr(G \which \lambda) \ra \W^{\circ}(G \which \lambda)_{\sim_G}
\]
be the composition of the equivariant bijections constructed above, we obtain the desired $f$.

  {\itshape Step 2.} The  map $f_6$  defined above is surjective.

  Fix any $S\in\Rad(G)$ and $\varrho \in \IBr(\N_G(S) \which \lambda, |S|)$, and let $Q=S\cap K$. By \cite[Lemma 2.3]{Nav1}, we have $Q\in\Rad(K).$ 
  Since $K$ is a normal subgroup of $G,$ we have $\N_G(S)\leqslant \N_G(Q).$
  Let $C=\N_{\N_K(Q)}(S)$ and $E=S\N_K(Q).$  
  Let $\hat{\varrho}\in\dz(\N_G(S)/S)$ be the character corresponding to $\varrho$; see (\ref{equ:dz-bij-vert.}).
  Let $\mu_1\in\Irr(SC/S)$ be a character over which $\hat\varrho$ lies.
  Since $SC\zg \N_G(S),$ we have $\mu_1\in\dz(SC/S).$ Via the natural isomorphism $SC/S\cong C/Q$, we obtain a defect zero character $\mu\in\dz(C/Q)$.
  Let $\xi\in\dz(\N_K(Q)/Q)$ be the DGN correspondent of $\mu$ with respect to $S/Q,$ and let $\delta\in\IBr(\N_K(Q)\which |Q|)$ correspond to $\xi$ under the bijection (\ref{equ:dz-bij-vert.}).
 Tracing the steps above, we see that $\delta$ lies over $\lambda$; hence there exists $\eta \in \IBr(K \which \lambda)$ such that $(Q,\delta) \in \Omega(\eta)$.
Then it follows that $(\eta, Q, E, S, \varrho) \in \textbf{E}$.
  Note that $\IBr\big(\N_{U_{\eta,Q}}(E)/Q \,\big|\, \xi^{\circ}, |S/Q|\big)$ is nonempty, since by construction it corresponds to the set $\IBr\big( \N_G(S)/Q \which \mu^{\circ}, |S/Q| \big)$, to which $\bar\varrho$ belongs, where $\bar\varrho \in \IBr(\N_G(S)/Q)$ denotes the deflation of $\varrho$.  
Thus, by Lemma \ref{thm_6.03}, we have $E/\N_K(Q) \in \Rad\big(U_{\eta,Q} / \N_K(Q)\big)$. This completes the proof of this step.

By construction, $f_6$ is $\mathscr{A}$-equivariant. Let $(\eta, Q, E, F, \chi), (\eta_1, Q_1, E_1, F, \chi) \in \textbf{E}$.  
We see that $Q = K \cap F = Q_1$ and $E = F \N_K(Q) = E_1$. Set $C = \N_{\N_K(Q)}(F)$.
Let $(Q,\delta)\in\Omega(\eta)$ and $(Q,\delta_1)\in\Omega(\eta_1).$
Since $\delta^{\star_F}, \delta_1^{\star_F} \in \IBr(C)$ both lie under $\chi$, we have $\delta_1^{\star_F} = \big(\delta^{\star_F}\big)^x$ for some $x \in \N_G(F)$.  
It follows that $\delta_1=\delta^x$, and then $\eta_1=\eta^x$ by the equivariance of $\Omega.$
This proves the injectivity of $f_6.$  

This completes the proof of the  theorem.
\end{proof}

Keep the notation in Theorem \ref{Redution}. Set $Z=1$ and $ \mathscr{A} =\hH\times\Aut(G).$ Then we obtain Theorem~\ref{thm-reduction}.


\section{Groups of Lie type in defining characteristic}\label{sec_defining_char}

Let $G$ be a finite group.
Recall that a weight of $G$ can be also defined as a pair $(Q,\varphi)$, where $\varphi\in\Irr(\N_G(Q)/Q)$ with $\varphi(1)_p=|\N_G(Q)/Q|_p$.
In the following, when mentioning a weight, we often mean this version.
Note that $\varphi^{\circ}$, restriction of $\varphi$ to all $p'$-elements of $\N_G(Q)/Q$, is a projective irreducible Brauer character of $\N_G(Q)/Q$. 
In this way, we can identify $(Q,\varphi)$ with $(Q,\varphi^{\circ})$. We also regard $\varphi$ as an irreducible character of $\N_G(Q).$

If $(R,\varphi)$ is a weight of $G$, then we write $\overline{(R,\varphi)}$ for the $G$-conjugacy class containing $(R,\varphi)$.
Let $\cW(G)$ denote the set of $G$-conjugacy classes of weights of $G$.
For $\nu\in\IBr(\ZZ(G))$, we denote by $\cW(S\which \nu)$ the subset of $\cW(G)$ consisting of $\overline{(R,\varphi)}$ with $\nu\in\IBr(\ZZ(G)\which \varphi^{\circ})$, where $\IBr(\ZZ(G)\which \varphi^{\circ})$ is the set of irreducible Brauer characters of $\ZZ(G)$ over which $\varphi^{\circ}$ lies.
Now we adapt \cite[Definition~3.1]{BS22} for the inductive NAW condition.

\begin{prop}\label{def:ind-GAW-cond-2}
  Let $L$ be a finite non-abelian  simple group of order divisible by $p$ and $U$ be the universal covering group of $L.$ Let $U_0$ be a characteristic central $p$-subgroup of $U$ and $S=U/U_0.$ 
 Assume that the following conditions hold.
  \begin{enumerate}
    \item There is an $\hH\times \Aut(S)$-equivariant bijection \[\Omega: \IBr(S)\to\cW(S)\] such that $\Omega(\IBr(S\which \nu)) = \cW(S\which \nu)$ for every $\nu\in\IBr(\ZZ(S))$.
    \item Fix any $\eta\in\IBr(S)$, and let $\overline{(Q,\varphi)}=\Omega(\eta)$. Let $Z$ be an $\Aut(S)_{\eta^{\hH}}$-invariant subgroup of $\ZZ(S)\cap\ker\eta$. 
    Write $\bar S=S/Z$, $\bar Q=QZ/Z,$ and  also regard $\eta$ as a character of $\bar{S}$ and $\varphi$ as a character of $\N_{\bar S }(\bar Q)$.
    Then $\bar S$ can be normally embedded into a finite group $\widetilde{A},$ and there exists a normal subgroup $A$ of $\widetilde{A}$ containing $\bar S\C_{\widetilde{A}}(\bar S)$, and characters $\tilde\eta\in\IBr(A)$ and $\tilde\varphi\in\IBr(\N_A(\bar Q))$ such that  the following conditions hold.
        \begin{enumerate}
          \item $\widetilde{A}/\C_{\widetilde{A}}(\bar S)\cong \Aut(\bar S)_{\eta^{\hH}}$ and $A/\C_{\widetilde{A}}(\bar S)\cong \Aut(\bar S)_{\eta}$ through the natural group homomorphism. Moreover, $\C_{\widetilde{A}}(\bar S)=\ZZ(A)$. 
          \item $\tilde\eta$ is an extension of $\eta$ and $\tilde\varphi$ is an extension of $\varphi^{\circ}$ such that $\IBr(\ZZ(A)\which \tilde\eta)=\IBr(\ZZ(A)\which  \tilde\varphi)$.
          \item For any $a\in(\hH\times \N_{\widetilde{A}}(\bar Q))_\varphi$, let $\tilde\eta^{a}=\mu_a\tilde\eta$ and $\tilde\varphi^a=\mu'_a\tilde\varphi$, where $\mu_a,\mu'_a$ are linear Brauer characters of $A/\bar S\cong \N_A(\bar Q)/\N_{\bar S}(\bar Q)$, then we have $\mu_a=\mu'_a.$
        \end{enumerate}
  \end{enumerate}
Then the inductive NAW condition holds for $L$ at the  prime $p$.
\end{prop}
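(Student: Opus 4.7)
The plan is to translate the bijective and cohomological data in hypotheses~(1) and~(2) directly into the three structural requirements of Definition~\ref{Def:ind-GAW}; the difference between $S=U/U_0$ and the universal $p'$-covering group $U/\bO_p(\ZZ(U))$ of $L$ is handled routinely via Lemma~\ref{thm_6.02}(2) since Brauer characters are trivial on central $p$-subgroups. Throughout I take $Z=1$ in hypothesis~(2), so $\bar S=S$ and $\bar Q=Q$. For each $Q\in\Rad(S)$ I set
$$\IBr(S\which Q)=\{\eta\in\IBr(S)\bwhich \Omega(\eta)=\overline{(R,\varphi)}\ \text{with}\ R\sim_S Q\}.$$
Disjointness and exhaustivity~(1,a)--(1,b) of Definition~\ref{Def:ind-GAW} are immediate from $\Omega$ being a bijection onto $\cW(S)$, and stability~(1,c) follows from the $\hH\times\Aut(S)$-equivariance of $\Omega$.

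For each $\eta\in\IBr(S\which Q)$ the $S$-orbit $\Omega(\eta)$ contains exactly one weight with first component $Q$, since any two such representatives would differ by $\N_S(Q)$-conjugation, which acts trivially on characters of $\N_S(Q)$. Writing $(Q,\varphi)$ for that representative, I set $\eta^{*_Q}=\varphi^{\circ}\in\dz(\N_S(Q)/Q)$. Bijectivity of $^{*_Q}$ is immediate from that of $\Omega$, the central-character compatibility~(2,a) comes from $\Omega(\IBr(S\which\nu))=\cW(S\which\nu)$, and the equivariance~(2,b) is again a consequence of the $\hH\times\Aut(S)$-equivariance of $\Omega$.

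The substantive step is condition~(3). Hypothesis~(2) with $Z=1$ gives a finite group $\widetilde A$ with $S\zg \widetilde A$ and $\widetilde A/\C_{\widetilde A}(S)\cong \Aut(S)_{\eta^{\hH}}$, a normal subgroup $A\zg\widetilde A$ with $A/\C_{\widetilde A}(S)\cong \Aut(S)_{\eta}$ and $\ZZ(A)=\C_{\widetilde A}(S)$, together with extensions $\tilde\eta\in\IBr(A)$ of $\eta$ and $\tilde\varphi\in\IBr(\N_A(Q))$ of $\varphi^{\circ}=\eta^{*_Q}$. My plan is to verify
$$(\widetilde A,S,\eta)_{\hH}\geqslant_c(\N_{\widetilde A}(Q),\N_S(Q),\eta^{*_Q})_{\hH}$$
clause by clause against Definition~\ref{def:H-trip}. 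The identities $\widetilde A=S\N_{\widetilde A}(Q)$ and $\N_S(Q)=S\cap\N_{\widetilde A}(Q)$ follow by a Frattini-type argument using $\Omega$-equivariance: any $x\in\widetilde A$ sends $(Q,\varphi)$ to an $S$-conjugate weight, so $x$ can be adjusted by an element of $S$ to land in $\N_{\widetilde A}(Q)$. The containment $\C_{\widetilde A}(S)\subseteq \N_{\widetilde A}(Q)$ is clear since centralizers of $S$ normalize $Q\leqslant S$. The stabilizer equality $(\hH\times\N_{\widetilde A}(Q))_{\eta}=(\hH\times\N_{\widetilde A}(Q))_{\eta^{*_Q}}$ is again $\Omega$-equivariance together with the bijection $\varphi\leftrightarrow\varphi^{\circ}$ for defect-zero characters. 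Taking $\Pj,\Pj'$ to be the genuine representations afforded by $\tilde\eta,\tilde\varphi$, the factor sets are trivial and agree; Schur's lemma combined with hypothesis~(2,b) yields equality of the central scalars on $\C_{\widetilde A}(S)=\ZZ(A)$; and the remaining cocycle equality $\mu_a=\mu_a'$ is exactly hypothesis~(2,c).

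I expect the main bookkeeping challenge to lie in the cocycle step: one must identify the functions $\mu_a,\mu_a'$ of Definition~\ref{def:H-trip}(4) with those in hypothesis~(2,c), and verify agreement under the canonical isomorphism $\N_A(Q)/\N_S(Q)\cong A/S$ induced by $A=S\N_A(Q)$. Because $\tilde\eta,\tilde\varphi$ are genuine representations rather than projective ones with nontrivial cocycle, the defining relations $\Pj^a\sim\mu_a\Pj$ and $(\Pj')^a\sim\mu_a'\Pj'$ collapse to the scalar identities $\tilde\eta^a=\mu_a\tilde\eta$ and $\tilde\varphi^a=\mu_a'\tilde\varphi$, which match the formulation in~(2,c) directly. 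Once these identifications are in place, all three conditions of Definition~\ref{Def:ind-GAW} are established and the proposition follows.
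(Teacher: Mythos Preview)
Your verification of Definition~\ref{def:H-trip} from the extensions $\tilde\eta,\tilde\varphi$ is correct, as is your construction of $\IBr(S\which Q)$ and $\eta^{*_Q}$ from~$\Omega$. The gap is your assumption that one may take $Z=1$ in hypothesis~(2). The quantification over $Z$ there is existential, not universal: for each $\eta$ the verifier is allowed to choose a convenient $Z\leqslant\ZZ(S)\cap\ker\eta$ and build $\widetilde A,A,\tilde\eta,\tilde\varphi$ over $\bar S=S/Z$. This freedom is genuinely used in applications --- in the proof of Theorem~\ref{thm:def-char} one takes $Z=\ker\eta$ precisely so that the representation $\Theta$ becomes faithful and the matrix-group construction of $A,\widetilde A$ goes through. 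So restricting to $Z=1$ discards cases the proposition is meant to cover.

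The paper closes this gap as follows. One fixes an arbitrary $\widetilde G$ with $S\zg\widetilde G$ and $\epsilon(\widetilde G)=\Aut(S)_{\eta^{\hH}}$ (e.g.\ $\widetilde G=S\rtimes\Aut(S)_{\eta^{\hH}}$). Hypothesis~(2) gives
$(\widetilde A,\bar S,\eta)_{\hH}\geqslant_c(\N_{\widetilde A}(\bar Q),\N_{\bar S}(\bar Q),\varphi^{\circ})_{\hH}$.
Now $\N_{\widetilde A}(\bar Q)$ and $\N_{\widetilde G/Z}(\bar Q)$ both induce $\Aut(\bar S)_{\eta^{\hH},\bar Q}$ on $\bar S$; here one uses $\Aut(S)_{\eta^{\hH}}=\Aut(\bar S)_{\eta^{\hH}}$, which follows from $\Aut(U)=\Aut(L)$ and the fact that $U_0,Z$ are characteristic or $\Aut(S)_{\eta^{\hH}}$-stable. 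Theorem~\ref{thm:H-triA} then transfers the ordering to
$(\widetilde G/Z,\bar S,\eta)_{\hH}\geqslant_c(\N_{\widetilde G}(Q)/Z,\N_{\bar S}(\bar Q),\varphi^{\circ})_{\hH}$,
after which inflation of the projective representations through $\widetilde G\to\widetilde G/Z$ yields the required $(\widetilde G,S,\eta)_{\hH}\geqslant_c(\N_{\widetilde G}(Q),\N_S(Q),\varphi^{\circ})_{\hH}$. This two-step transfer (Theorem~\ref{thm:H-triA} plus inflation) is the missing ingredient in your argument; once you insert it, the rest of your proof goes through. The passage from $S$ to the universal $p'$-cover $S/\ZZ(S)_p$ is then handled as you indicate, with the paper making the identifications $\IBr(S)=\IBr(S/\ZZ(S)_p)$, $\cW(S)=\cW(S/\ZZ(S)_p)$ and $\Aut(S)=\Aut(S/\ZZ(S)_p)$ explicit.
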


\begin{proof}
Let $\widetilde{G} = S \rtimes \Aut(S)_{\eta^{\hH}}$. We aim to prove that $(\widetilde{G}, S, \eta)_{\hH} \geqslant_c (\N_{\widetilde{G}}(Q), \N_S(Q), \varphi^{\circ})_{\hH}.$
Since $\Omega$ is $\hH \times \widetilde{G}$-equivariant and $\eta^{\hH}$ is $\widetilde{G}$-invariant, it follows easily that $\widetilde{G} = S \N_{\widetilde{G}}(Q)$ and
$
(\hH \times \N_{\widetilde{G}}(Q))_{\eta} = (\hH \times \N_{\widetilde{G}}(Q))_{\varphi^{\circ}}.
$
Notice that $Q$ is the unique Sylow $p$-subgroup of $QZ$ (see Lemma \ref{thm_6.02}(2)), and $Z$ is normal in $\widetilde{G}$ by assumption. Hence an element of $\widetilde{G}$ normalizes $Q$ if and only if it normalizes $QZ$. Consequently, in $\widetilde{G}/Z$ we have
$
\N_{\widetilde{G}}(Q)/Z = \N_{\widetilde{G}/Z}(\bar{Q})$ and $
\N_S(Q)/Z = \N_{\bar{S}}(\bar{Q}),
$
where $\bar{Q} = QZ/Z$, $\bar{S} = S/Z$.
Moreover, $\Aut(S)_{\eta^{\hH}} = \Aut(\bar{S})_{\eta^{\hH}}$ via the quotient map. This holds because $\Aut(S) = \Aut(L)$ by properties of universal covering groups (see \cite[Corollary B.8]{Nav-Mckaybook}) and the fact that $U_0$ is characteristic in $U$. Thus $\Aut(\bar{S})$ can be identified with the subgroup $\Aut(S)_Z$ of $\Aut(S)$. Since $Z$ is stabilized by $\Aut(S)_{\eta^{\hH}}$, we have $\Aut(\bar{S})_{\eta^{\hH}} = \Aut(S)_{\eta^{\hH}}$.
Condition (2) of the proposition then yields
\[
(\widetilde{A}, \bar{S}, \eta)_{\hH} \geqslant_c (\N_{\widetilde{A}}(\bar{Q}), \N_{\bar{S}}(\bar{Q}), \varphi^{\circ})_{\hH}.
\]
Observe that $\N_{\widetilde{A}}(\bar{Q})$ and $\N_{\widetilde{G}/Z}(\bar{Q})$ induce the same automorphism group $\Aut(\bar{S})_{\eta^{\hH}, \bar{Q}}$ on $\bar{S}$. Hence by Theorem \ref{thm:H-triA}, we have
\[
(\widetilde{G}/Z, \bar{S}, \eta)_{\hH} \geqslant_c (\N_{\widetilde{G}}(Q)/Z, \N_{\bar{S}}(\bar{Q}), \varphi^{\circ})_{\hH}.
\]
Finally, applying Lemma \ref{lem:H-triUD} yields the desired relation
\[
(\widetilde{G}, S, \eta)_{\hH} \geqslant_c (\N_{\widetilde{G}}(Q), \N_S(Q), \varphi^{\circ})_{\hH}.
\]

Let $\ZZ(S)_p$ denote the unique Sylow $p$-subgroup of $\ZZ(S)$.
 Then $S/\ZZ(S)_p$ is the universal $p'$-covering group of $L$, and  quotient of maps induces a bijection $\Aut(S) \to \Aut(S/\ZZ(S)_p)$ (see \cite[Appendix B]{Nav-Mckaybook}). Hence we may identify $\Aut(S) = \Aut(S/\ZZ(S)_p)$.
To verify that the inductive NAW condition holds for $L$, we work with $S/\ZZ(S)_p$. Note that $\IBr(S) = \IBr(S/\ZZ(S)_p)$ via inflation of characters. By Lemma \ref{thm_6.02}(2), the natural map $(Q,\varphi) \mapsto (Q/\ZZ(S)_p, \varphi)$ yields an identification $\cW(S) = \cW(S/\ZZ(S)_p)$, where $(Q,\varphi)$ is a weight of $S$.
Thus there exists an $\hH \times \Aut(S/\ZZ(S)_p)$-equivariant bijection
\[
\Omega: \IBr(S/\ZZ(S)_p) \ra \cW(S/\ZZ(S)_p)
\]
that preserves the central characters on $\ZZ(S)/\ZZ(S)_p$.
Now let $\eta$, $Q$, $\varphi$, and $\widetilde{G}$ be as in the previous paragraph. There we have shown that
$
(\widetilde{G}, S, \eta)_{\hH} \geqslant_c (\N_{\widetilde{G}}(Q), \N_S(Q), \varphi^{\circ})_{\hH}.
$
Applying Lemma \ref{lem:H-triUD} gives
\[
\big( \widetilde{G}/\ZZ(S)_p,\; S/\ZZ(S)_p,\; \eta \big)_{\hH}
\geqslant_c \big( \N_{\widetilde{G}}(Q)/\ZZ(S)_p,\; \N_S(Q)/\ZZ(S)_p,\; \varphi^{\circ} \big)_{\hH}.
\]
Observe that $\N_{\widetilde{G}}(Q)/\ZZ(S)_p = \N_{\widetilde{G}/\ZZ(S)_p}(Q/\ZZ(S)_p)$ and $\N_S(Q)/\ZZ(S)_p = \N_{S/\ZZ(S)_p}(Q/\ZZ(S)_p)$.
In view of the above, we may assume without loss of generality that $\ZZ(S)_p = 1$.

Now $S$ is the universal $p'$-covering group of $L,$ and  conditions (1) and (2) of Definition \ref{Def:ind-GAW} hold in our case. 
\end{proof}  

\begin{remark}\label{rmk:add-condition}
In Proposition~\ref{def:ind-GAW-cond-2}, we assume that conditions (1) and (2.a), (2.b) hold, then the condition (2.c) holds automatically if one of the following is satisfied.
\begin{enumerate}
\item $A/\bar S$ is $p$-group.
\item Both $\tilde\eta$ and $\tilde\varphi$ are $(\hH\times \N_{\widetilde{A}}(\bar Q))_\varphi$-stable.
\end{enumerate}
\end{remark}  

\begin{prop}\label{prop:3-groups}
The inductive NAW condition holds for
  simple groups $\textup{Sp}_4(2)',\textup G_2(2)'$ and $ {}^2\textup F_4(2)'$ at the prime 2.
\end{prop}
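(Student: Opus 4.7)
The plan is to verify the hypotheses of Proposition \ref{def:ind-GAW-cond-2} directly for each of the three simple groups, exploiting the fact that each is small enough to be handled completely via the explicit $2$-modular data in the ATLAS and GAP. We work throughout at $p=2$. Since the Schur multiplier of $\textup{G}_2(2)'$ and ${}^2\textup{F}_4(2)'$ is trivial, we take $U=L$ and $U_0=1$; for $L\cong A_6 = \textup{Sp}_4(2)'$, whose Schur multiplier is $\mathbb{Z}/6$, we let $U_0$ be its (unique) $2$-part, so $S$ has no $p$-torsion in its center. In every case the outer automorphism group $\Out(S)$ is a $2$-group of order at most $4$, and $\ZZ(S)$ has order dividing $3$.

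First I will construct the bijection $\Omega$. Using GAP one enumerates the conjugacy classes of $2$-radical subgroups $Q\leqslant S$, computes $\N_S(Q)/Q$ and its set of defect-zero ordinary characters, and thereby obtains $\cW(S)$ as an $\hH\times\Aut(S)$-set. The $2$-Brauer character table of $S$, together with the known action of Galois automorphisms and the outer automorphisms, presents $\IBr(S)$ as another $\hH\times\Aut(S)$-set. The Alperin weight conjecture is already known for these three groups, so the cardinalities match; one then builds $\Omega$ orbit by orbit, subdividing according to central character in $\ZZ(S)$ to ensure the condition $\Omega(\IBr(S\mid\nu))=\cW(S\mid\nu)$. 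Since $|\Out(S)|\leqslant 4$ and $|\hH|$ acts through a small quotient determined by the values of the involved characters, this is a finite explicit check.

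Next I will verify condition (2) of Proposition \ref{def:ind-GAW-cond-2}. For each $\eta\in\IBr(S)$ with $\Omega(\eta)=\overline{(Q,\varphi)}$, the groups $\widetilde A$ and $A$ can be chosen so that $A/\overline S$ embeds into $\Aut(\overline S)_\eta/\Inn(\overline S)$, a subgroup of $\Out(\overline S)$, which is a $2$-group. Consequently Remark \ref{rmk:add-condition}(1) makes condition (2.c) automatic. It remains to exhibit simultaneous extensions $\tilde\eta\in\IBr(A)$ of $\eta$ and $\tilde\varphi\in\IBr(\N_A(\bar Q))$ of $\varphi^\circ$ with matching central characters on $\ZZ(A)=\C_{\widetilde A}(\overline S)$. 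Since $A/\overline S$ is a $p$-group on which $\eta$ and $\varphi^\circ$ are both stable, modular Clifford theory (see e.g.\ \cite[Chapter IV]{Nagao}) provides extensions on each side; matching the central characters is then achieved by a twist by a suitable linear Brauer character of $A/\overline S\C_{\widetilde A}(\overline S)$.

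The principal obstacle is bookkeeping rather than conceptual: for $A_6$, whose $\Out$ is Klein four, one must coordinate the two outer automorphism classes with the Galois orbits of certain $\hH$-conjugate Brauer characters, but this is resolved by direct comparison of joint orbit lengths on the two sides. The tables for ${}^2\textup{F}_4(2)'$ are longest, due to the larger number of $2$-radical subgroups, but the logic is identical to the other two cases.
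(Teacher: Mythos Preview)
Your plan follows the paper's: both verify Proposition~\ref{def:ind-GAW-cond-2} and invoke Remark~\ref{rmk:add-condition}(1) (using that $\Out(L)$ is a $2$-group) to dispose of condition~(2.c); for (2.a) and (2.b) the paper simply quotes \cite[Proposition~6.1]{Nav1}, which supplies the groups $A,\widetilde A$ and the needed extensions --- your appeal to unique extendibility of an invariant Brauer character over a $p$-group quotient is the same mechanism.

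The one genuine difference is in condition~(1). You propose to match $\hH\times\Aut(S)$-orbits computationally. The paper instead takes the $\Aut(S)$-equivariant bijection already built in \cite[Proposition~6.1]{Nav1} and argues that it is automatically $\hH$-equivariant: on blocks of defect zero $\Omega(\eta)$ is the weight $(1,\varphi)$ with $\varphi^\circ=\eta$, while on blocks of positive defect every irreducible $2$-Brauer character of $S$ is rational-valued (from \cite{JLPW}) and for every radical $Q>1$ the quotient $\N_S(Q)/Q$ is one of $1$, $S_3$, or $C_5\rtimes C_4$, whose defect-zero characters are all $\hH$-fixed. This removes the orbit bookkeeping you anticipate as the ``principal obstacle''. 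One caveat: your choice $S=3.A_6$ for $L=A_6$ is in fact more faithful to Definition~\ref{Def:ind-GAW} than the paper's, which works with $S=L$; for the faithful Brauer characters of $3.A_6$ the element $\sigma_2\in\hH$ genuinely swaps the two nontrivial central characters of $\ZZ(S)\cong C_3$, so there your orbit-matching (together with the central-character compatibility you build into $\Omega$) is exactly what is required.
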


\begin{proof}
When $L\in\{\textup{Sp}_4(2)'\cong A_6, \textup G_2(2)'\cong\textup{SU}_3(3), {}^2\textup F_4(2)'\}$, the simple group $L$ has trivial Schur multiplier (i.e., $S=L$ is the universal covering group of $L$), and $\textup{Out}(S)$ is of order 2 or isomorphic to Klein four group. Thanks to Remark~\ref{rmk:add-condition} (1), the condition (2.c) of Proposition~\ref{def:ind-GAW-cond-2} holds. 
By \cite[Proposition~6.1]{Nav1}, the condition (2) of Proposition~\ref{def:ind-GAW-cond-2} is satisfied.
Also, an $\Aut(S)$-equivariant bijection $\Omega: \IBr(S)\to\cW(S)$ is already established in the proof of \cite[Proposition~6.1]{Nav1}.
Note that in the construction of $\Omega$, one has that $\Omega(\eta)=\varphi$, if $\eta$ lies in a block $B$ of defect zero with $\Irr(B)=\{\varphi\}$.
It follows that $\Omega(\eta^a)=\Omega(\eta)^a$ since $\eta=\varphi^{\circ}$.
Thus it suffices to show every element in $\IBr(S)\cup\cW(S)$ lying in a block of positive defect is $\hH$-invariant.

The Brauer character table of the group $S\in\{\textup{Sp}_4(2)', \textup G_2(2)', {}^2\textup F_4(2)'\}$ can be found in \cite{JLPW}. Accordingly, the irreducible Brauer characters of $S$ lying in a block of positive defect take rational values and thus $\hH$-invariant.
Now let $(Q,\varphi)$ be a weight of $S$ lying in a block of positive defect, that is, $Q>1$.
Then $\N_S(Q)/Q\cong 1$, $S_3$ or $C_5\rtimes C_4$ (cf. \cite{CCNPW85}, see also the proof of \cite[Proposition~6.1]{Nav1}). For each situation, we can show that the irreducible characters of $\N_S(Q)/Q$ with defect zero are $\hH$-invariant.
In fact, all irreducible characters of the symmetric group $S_3$ take rational values, and the group $C_5\rtimes C_4$ has a unique irreducible character of defect zero.
Thus we complete the proof.
\end{proof}  

Next, we check the inductive NAW condition  for simple groups of Lie type at the defining characteristic.

\begin{proof}[Proof of Theorem~\ref{thm:def-char}]
  According to Proposition~\ref{prop:3-groups}, we may assume $L\notin\{\textup{Sp}_4(2)', \textup G_2(2)', {}^2\textup F_4(2)'\}$ for $p=2$.
  For $p=3$, we may likewise exclude the group ${}^2G_2(3)'\cong\textup{SL}_2(8)$, as it will be handled Proposition~\ref{prop:sl2} (below).
 Note that we only use the non-defining characteristic part of the proof of Proposition~\ref{prop:sl2}, so the reasoning there is independent of the argument that we need from Proposition~\ref{prop:sl2}.
For every other situation the exceptional part of the Schur multiplier of $L$ is a $p$-group, see \cite[Table 6.1.3]{GLS98}.
Hence there exists some simply-connected simple algebraic group $\mathcal G$ defined over $\overline{\mathbb F}_p$ and some Steinberg map $F:\mathcal G\to\mathcal G$ such that $S=\mathcal G^F$ is a covering group of $L$ such that $S/\ZZ(S)_p$ is a universal $p'$-covering group of $L$.
Write $\overline{\mathbb F}_p$ as $\mathbb F$.

(I) The simple group $S$ is AWC-good for $p$ according to \cite[Theorem~C]{Nav1}; we first recall the $\Aut(S)$-equivariant bijection $\Omega:\IBr(S)\to\cW(S)$ constructed there. 

The group $S$ has a restricted split $(B,N)$-pair in characteristic $p$.
Fix a Borel subgroup of $S$. By a theorem of Curtis \cite{Cu70} there exists a bijection between the set of isomorphism classes $[V]$ of simple $\mathbb FS$-modules $V$, 
\[[V]\longleftrightarrow (P,\lambda)\]
and the set $\mathscr P(S)$ of pairs $(P,\lambda)$, where $P$ is any parabolic subgroup containing $B$ of $S$, and $\lambda$ is a group homomorphism $P\to\mathbb F^\times$.
This bijection is defined as follows. Every simple $\mathbb FS$-module $V$ containing a unique $B$-invariant line $\mathbb Fv$, $P$ is the stabilizer of $\mathbb Fv$ in $S$, and $g.v=\lambda(g)v$ for all $g\in P$.

According to \cite{BT71}, any radical $p$-subgroup $Q$ of $S$ is conjugate to $\bO_p(P)$ for some parabolic subgroup $P$ containing $B$ of $S$.
Moreover, $\N_S(Q)=\N_S(P)=P=Q\rtimes Y$, where $Y$ is a Levi subgroup.
By \cite{Alp-conj} or the proof of \cite[Theorem~C]{Nav1}, if $(Q,\varphi)$ is a $p$-weight of $S$, then $\varphi\in\dz(L)$ which forces that $\varphi=\lambda\cdot\textup{St}_Y$, where $\textup{St}_Y$ is the Steinberg character of $Y$, and $\lambda\in\Irr(Y)$ is a linear character.
Equivalently, $\lambda$ can be regarded as a group homomorphism $P\to\mathbb F^\times$.
In this way, \[(Q,\varphi)\longleftrightarrow (P,\lambda)\] induces a bijection $\cW(S)\to\mathscr P(S)$. This is also the map constructed by Alperin \cite{Alp-conj}.

Therefore, we get a bijection $\Omega:\IBr(S)\to\cW(S)$.
By the proof of \cite[Theorem~C]{Nav1}, $\Omega$ is $\Aut(S)$-equivariant and satisfies that $\Omega(\IBr(S\which \nu)) = \cW(S\which \nu)$ for every $\nu\in\IBr(\ZZ(S))$.

For later use, we recall the description of the behavior of the parameters under the action of $\Aut(S)$ from the proof of \cite[Theorem.~C]{Nav1} as follows. Let $a\in\Aut(G)$. Then there exists $s\in S$ with $U^a=U^s$, where $U=\bO_p(B)$ is a Sylow $p$-subgroup of $S$. 
Then $[V^a]\longleftrightarrow (P^{as},\lambda^{as})$.

(II) Next, we prove that the bijection $\Omega$ is $\hH$-equivariant, which implies the condition (1) of Proposition~\ref{def:ind-GAW-cond-2}. Let $\sigma\in\hH$.

Let $\eta$ be an irreducible Brauer character of $S$ and $V$ be a simple $\mathbb FS$-module affording $\eta$. Suppose that $[V]\leftrightarrow (P,\lambda)$.
Let $n$ denote the dimension of $V$.
Identify $V$ to $\mathbb F^n$, and we let $\rho:S\to\text{GL}(n,\mathbb F)$ denote the corresponding representation of the $\mathbb FS$-module $V$. 
Then we construct the irreducible $\mathbb FS$-module $V^\sigma$ by considering the new action of $S$ on the space $V$ as $g\circ w=\sigma(\rho(g))(w)$ for any $g\in S$ and $w\in V$.
So $V^\sigma$ affords $\eta^\sigma$ by construction.
Let $\mathbb Fv$ be the $B$-invariant line of $V$, with $v\in\mathbb F^n$.
Consider the action of $\hH$ in $\mathbb F^n$, induced by its action on $\mathbb F$.
It can be seen that $\mathbb Fv^\sigma$ is the $B$-invariant line of $V^\sigma$.
Moreover, $P$ is the stabilizer of $\mathbb Fv^\sigma$ in $S$, and $g\circ v=\sigma(\lambda(g)) v^\sigma$ for all $g\in P$.
From this, $[V^\sigma]\leftrightarrow (P,\lambda^\sigma)$.

On the other hand, let $(Q,\varphi)$ be a $p$-weight with $(Q,\varphi)\leftrightarrow (P,\lambda)$.
As $\varphi=\textup{St}_L\lambda$ and the Steinberg character takes rational value, we see that $(Q,\varphi^\sigma)\leftrightarrow (P,\lambda^\sigma)$.
As a consequence, the map $\Omega$ is $\hH$-equivariant.

(III) Now we proceed to verify the condition (2) of Proposition~\ref{def:ind-GAW-cond-2}. Note that $\ZZ(S)$ is of $p'$-order. 
Fix $\eta\in\IBr(S)$ and a simple $\mathbb S$-module $V$ affording $\eta$.
For simplicity, we first assume that $\ker\eta=1$.
Identify $V$ to $\mathbb F^n$, where $n$ is the dimension of $V$.
Then we have a faithful irreducible $\mathbb F$-representation \[\Theta:S\to\GL(n,\mathbb F)\] corresponding to $V$.
Then $\det(\Theta(s))=1$ for all $s\in S$ since $S$ is perfect.
As $\hH$ acts naturally on $\GL(n,\mathbb F)$, we will consider the semidirect product $\GL(n,\mathbb F)\rtimes \hH$ in the following.

For every $\sigma x\in (\hH\times\Aut(S))_\eta$ with $\sigma\in\hH$ and $x\in\Aut(S)$, there exists $\Theta_{\sigma x}\in\GL(n,\mathbb F)$ such that \[\Theta(x(g))^\sigma=\Theta_{\sigma x}\Theta(g)\Theta_{\sigma x}^{-1}\] for all $g\in S$.
Define \[G=\{\alpha\Theta_{\sigma x}\which  \alpha\in\mathbb F^\times, \sigma\in\hH, x\in\Aut(S), \sigma x\in (\hH\times\Aut(S))_\eta, \det(\alpha \Theta_{\sigma x})=1\}.\]
As in \cite[p.554]{Nav1}, one shows that $G$ is a subgroup of $\GL(n,\mathbb F)$.
Furthermore, $G$ is a finite group with $|G|\le n\cdot |(\hH\times\Aut(S))_\eta|$.
Also, $\Theta(S)\subseteq G$. In what follows we identify $\Theta(S)$ with $S$.

Set \[A=\{\alpha\Theta_x\in G\which  \alpha\in\mathbb F^\times,x\in\Aut(S)_\eta\}.\] 
As above, it can be shown that $A$ is a group with $S\unlhd A\le G$.
By the irreducibility of $\Theta$, we know that $\ZZ(S)\le\C_A(S)=\ZZ(A)<\ZZ(\GL(n,\mathbb F))$ which also implies that $\ZZ(A)$ is a $p'$-group.

Set 
\[\tilde A=\{\sigma^{-1}\cdot(\alpha\Theta_{\sigma x})\in\GL(n,\mathbb F)\rtimes\hH\which  \alpha\in\mathbb F^\times, \sigma\in\hH, x\in\Aut(S), \alpha\Theta_{\sigma x}\in G\}.\]
Consider any $\sigma^{-1}\cdot(\alpha\Theta_{\sigma x})$, $\tau^{-1}\cdot(\beta\Theta_{\tau y})\in \tilde A$, where $\alpha,\beta\in\mathbb F^\times$, $\sigma,\tau\in\hH$, $x,y\in\Aut(S)$ such that $\alpha\Theta_{\sigma x},\beta\Theta_{\tau y}\in G$.
Then $\sigma^{-1}\cdot(\alpha\Theta_{\sigma x})\cdot\tau^{-1}\cdot(\beta\Theta_{\tau y})=(\tau\sigma)^{-1}\cdot(\alpha\Theta_{\sigma x})^\tau(\beta\Theta_{\tau y})$ and
\begin{align*}
  &(\Theta_{\sigma x})^\tau\Theta_{\tau y}\Theta(g)\Theta_{\tau y}^{-1}(\Theta_{\sigma x}^{-1})^\tau=(\Theta_{\sigma x})^\tau\Theta(y(g))^\tau(\Theta_{\sigma x}^{-1})^\tau\\
  & =(\Theta(xy(g))^{\sigma})^{\tau}=\Theta_{\tau\sigma xy}\Theta(g)\Theta_{\tau\sigma xy}^{-1}.
\end{align*}
From this 
$(\Theta_{\sigma x})^\tau\Theta_{\tau y}\Theta_{\tau\sigma xy}^{-1}$ centralizes $\Theta(S)$. Schur's Lemma implies that $(\Theta_{\sigma x})^\tau\Theta_{\tau y}=\delta\Theta_{\tau\sigma xy}$ for some $\delta\in\mathbb F^\times$.
As $\det((\alpha\Theta_{\sigma x})^\tau(\beta\Theta_{\tau y}))=1$, one gets $(\alpha\Theta_{\sigma x})^\tau(\beta\Theta_{\tau y})\in G$.
Therefore, $\tilde A$ is a finite subgroup of $\GL(n,\mathbb F)\rtimes \hH$. 

By construction, $S\unlhd\tilde A$, $\widetilde{A}/\C_{\widetilde{A}}(S)\cong \Aut(S)_{\eta^{\hH}}$ and $A/\C_A(S)\cong \Aut(S)_{\eta}$.
Let $\tilde\eta$ be the Brauer character afforded by the inclusion $A\le\GL(n,\mathbb F)$. Then $\tilde\eta$ is an extension of $\eta$.
By the irreducibility of $\eta$ and Schur's lemma, $\ZZ(S)\le\ZZ(A)\le\ZZ(\GL(n,\mathbb F))$ and $\C_A(S)=\ZZ(A)$.

Suppose that $V\leftrightarrow(P,\lambda)$, $Q=\bO_p(P)$ and $\overline{(Q,\varphi)}=\Omega(\eta)$. Write $P=Q\rtimes Y$ with a Levi subgroup $Y$ of $S$ so that $\varphi=\lambda\cdot\textup{St}_Y$.
Let $\sigma a\in(\hH\times\N_{\widetilde{A}}(Q))_\eta$. Write $\tilde\eta^{\sigma a}=\mu\tilde\eta$, where $\mu$ are linear characters of $A/S$ with $p'$-order. 
Then by (I) and (II), we know that $(P,\lambda)=(P^{as},\lambda^{\sigma as})$, where $s\in S$ satisfying that $U^a=U^s$. 
As $a\in\N_{\widetilde{A}}(Q)$, we have $P=P^{as}=P^s$ which forces $s\in P$. This implies that $\lambda^{\sigma a}=\lambda$.
Moreover, $v^\sigma$ is the $B^{a}$-invariant line in $V^{\sigma a}$ by (II) and the proof of \cite[Theorem~C]{Nav1}.

View $\vartheta:=\textup{St}_Y$ as a basic $p$-Steinberg character (see\cite{Fe93} for the definition) of $\N_S(Q)/Q$. As shown in the proof of \cite[Theorem~C]{Nav1} (using \cite[Theorem~B]{Fe93}), $\vartheta$ extends to a rational character $\tilde\vartheta$ of $\N_{\widetilde A}(Q)/Q$.
Let $\mathbb Fv$ be the $B$-invariant line in $V$. By the proof of \cite[Theorem~C]{Nav1}, $\mathbb Fv$ is fixed by $\N_{A}(Q)/Q$, that is, as a linear character of $\N_S(Q)$, $\lambda$ has an extension $\tilde\lambda$ to $\N_{A}(Q)$.
Thus $\tilde\varphi:=\tilde\lambda\tilde\vartheta$ is an extension of $\varphi$ to $\N_A(Q)$.
In particular, $\tilde\varphi^{\sigma a}=\tilde\lambda^{\sigma a}\tilde\vartheta$.
Write $\tilde\varphi^{\sigma a}=\mu'\tilde\varphi$, where $\mu'$ are linear characters of $\N_A(Q)/\N_{S}(Q)$ with $p'$-order. 
Then $\tilde\lambda^{\sigma a}=\mu'\tilde\lambda$.  
By the proof of \cite[Theorem~C]{Nav1}, the condition (2.b) of Proposition~\ref{def:ind-GAW-cond-2} holds.

As $\tilde\eta^{\sigma a}=\mu\tilde\eta$, there exists a matrix $M\in\textup{GL}(n,\mathbb F)$ such that $(\alpha\Theta_x)^{\sigma a}=M(\mu(\alpha\Theta_x)\alpha\Theta_x)M^{-1}$ for every $\alpha\Theta_x\in A$.
Moreover, $v$ is an eigenvector of any matrix $\alpha\Theta_x\in \N_A(Q)$ corresponding to the eigenvalue $\tilde\lambda(\alpha\Theta_x)$ and thus $v$ is also an eigenvector of the matrix $\mu(\alpha\Theta_x)\alpha\Theta_x$ corresponding to the eigenvalue $\mu(\alpha\Theta_x)\tilde\lambda(\alpha\Theta_x)$.
On the other hand, $v^\sigma$ is an eigenvector of any matrix $(\alpha\Theta_x)^{\sigma a}$ corresponding to the eigenvalue $\tilde\lambda((\alpha\Theta_x)^{a\sigma})$.
From this, $\tilde\lambda^{\sigma a}=\mu\tilde\lambda$, and this implies that $\mu=\mu'$. Thus we have proved the condition (2) of Proposition~\ref{def:ind-GAW-cond-2} in the situation $\ker\eta=1$. 

For the case $\ker\eta>1$, we consider the group $S/\ker\eta$ instead of $S$ in the above arguments.
Then the groups $A$ and $\widetilde A$ can be defined analogously.
Moreover, the extendibility of Brauer characters in the condition (2) of Proposition~\ref{def:ind-GAW-cond-2} also holds as in the proof of \cite[Theorem~C]{Nav1}, while the condition (2.b) can also be established as above.
Thus we complete the proof.
\end{proof}


\section{Simple groups with abelian Sylow 2-subgroups}\label{sec_abelian2}

Let $G$ be a finite group and $Q$ be $p$-subgroup of $G$. We denote by $\IBrd(G\which Q)$ (this notation is adopted to distinguish from the notation $\IBr(G\which |Q|)$ used in the previous sections) the set of irreducible Brauer characters of $G$ lying in blocks with defect group $Q$.
If $G$ has a cyclic Sylow $p$-subgroup, then using \cite[Lemma~4.7]{Da96}, Navarro and Tiep \cite[Theorem~7.1]{Nav1} constructed a natural bijection \[f_{G,Q}:\IBrd(G\which Q)\to\IBrd(\N_G(Q)\which Q).\]
We aim to prove that this bijection satisfies several properties, as stated in Proposition \ref{lem:Dade-cyclic}.
Before proceeding, we introduce some lemmas; the main references are \cite{Alp-local-book} and \cite{Nagao}.
 If $B$ is a block of $G,$ then we denote by $\IBr(B)$ the set of irreducible Brauer characters of $G$ lying in the block $B$.

\begin{lem}\label{lem:cyc1}
  Let $G$ be a finite group with a cyclic Sylow $p$-subgroup. Let $Q$ be a non-trivial $p$-subgroup of $G$ and $Q_1$ be the unique subgroup of $Q$ of order $p.$ Let $B$ be a block of $G$ with defect group $Q$ and $b_1$ be the Brauer correspondent of $B$ in $H:=\N_G(Q_1).$ 
  \begin{enumerate}
    \item The simple $b_1$-modules have the same dimension and every indecomposable $b_1$-module is uniserial.
    \item If $V$ is an indecomposable  non-projective $B$-module and $W$ is the ($G,Q,H$-)Green correspondent of $V,$ then $W$ belongs to $b_1$ and as an $H$-module it is the unique direct summand of $V_H$ that is non-projective and belongs to a block with defect group $Q.$
    \item There is a natural bijection $$\mathscr{G}:\IBr(B)\ra\IBr(b_1)$$ such that $\mathscr{G}(\varphi),$ $\varphi\in\IBr(B),$ is the Brauer character of $\soc(W),$  where $W$ is the Green correspondent of the simple $G$-module $V$ that corresponds to $\varphi.$
  \end{enumerate}
\end{lem}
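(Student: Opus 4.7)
\bigskip

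\noindent\textbf{Proof proposal.} The plan is to assemble Lemma~\ref{lem:cyc1} from the classical structure theory of blocks with cyclic defect groups (Brauer--Dade theory) together with Green correspondence. Since the hypothesis only asks that the Sylow $p$-subgroup of $G$ is cyclic (so $Q$ itself is cyclic), the block $B$ has cyclic defect group and the whole Brauer tree machinery is available both for $B$ and for its Brauer correspondent $b_1$.

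For part~(1), the plan is to exploit the fact that $Q_1=\Omega_1(Q)$ is characteristic in $Q$ and normal in $H=\N_G(Q_1)$, so the block $b_1$ falls into the well-understood case of a block with cyclic defect group whose unique subgroup of order $p$ is normal in the ambient group. In this situation the Brauer tree of $b_1$ is a star with exceptional vertex at its centre (see Alperin~\cite{Alp-local-book} or Nagao--Tsushima~\cite{Nagao}, Chapters on blocks with cyclic defect groups). From the shape of the Brauer tree one reads off at once that all simple $b_1$-modules have the same dimension, and that the basic algebra of $b_1$ is a Brauer tree algebra of "star" type, which is a Nakayama algebra; hence every indecomposable $b_1$-module is uniserial.

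For part~(2), I would verify that the triple $(G,Q,H)$ is a Green correspondence setup, i.e. that $\N_G(Q)\leq H$: any $g\in\N_G(Q)$ normalises the characteristic subgroup $Q_1$ of the cyclic group $Q$, hence $g\in\N_G(Q_1)=H$. Since $B$ has cyclic defect group $Q$, every indecomposable non-projective module in $B$ has vertex equal to $Q$ (this is standard for blocks with cyclic defect groups; see \cite[Chapter V]{Alp-local-book}). The Green correspondent $W$ of $V$ is then an indecomposable $H$-module with vertex $Q$. The compatibility of Green and Brauer correspondences forces $W$ to lie in the Brauer correspondent $b_1$ of $B$. The uniqueness assertion follows from the standard decomposition of $V_H$ provided by the Green correspondence theorem: $V_H=W\oplus W'$, where every indecomposable summand of $W'$ has vertex strictly smaller than $Q$ and therefore either is projective or lies in a block of $H$ whose defect group is a proper subgroup of $Q$; in either case such a summand is not a non-projective module lying in a block with defect group $Q$.

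For part~(3), I would combine~(1) and~(2): given $\varphi\in\IBr(B)$, take the simple $B$-module $V$ affording $\varphi$, form its Green correspondent $W$ in~$H$, which by~(2) is an indecomposable $b_1$-module, and set $\mathscr{G}(\varphi)$ to be the Brauer character of $\soc(W)$, which is simple by the uniseriality from~(1). To see that $\mathscr{G}$ is a bijection I would invoke Dade's classification: the Green correspondence is a bijection on isomorphism classes of indecomposable non-projective modules with vertex $Q$, and under the star-shaped Brauer tree of $b_1$ the assignment "take the socle of the Green correspondent" is a bijection $\IBr(B)\to\IBr(b_1)$.

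The main technical obstacle I anticipate is pinning down the precise statement that "the socle of the Green correspondent" is a bijection, rather than merely a well-defined map; this ultimately rests on the matching of the two Brauer trees of $B$ and $b_1$ established in Dade's theorem, together with the fact that the uniserial indecomposable $b_1$-modules of a given length are determined up to isomorphism by their socle. The cleanest way to handle this will be to cite the relevant statements from \cite[Chapter V]{Alp-local-book} and \cite[Chapter~VII]{Nagao} rather than redoing the Brauer tree analysis.
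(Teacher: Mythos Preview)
Your proposal is correct and follows essentially the same route as the paper: both assemble the lemma from the standard structure theory of blocks with cyclic defect group (Alperin~\cite{Alp-local-book}) together with the Green correspondence for the triple $(G,Q,H)$, and both ultimately cite the same sources for parts~(1) and~(3). The only noteworthy difference is in the uniqueness argument for part~(2): the paper argues directly that the set $\mathfrak{Y}(G,Q,H)=\{Q\cap Q^{g}\mid g\in G\setminus H\}$ contains no non-trivial subgroup of $Q$ (since any non-trivial subgroup of the cyclic group $Q$ contains $Q_{1}$, forcing $g\in\N_G(Q_1)=H$), so every summand of $V_H$ other than $W$ is projective; you instead argue that a summand with vertex properly contained in $Q$ is either projective or lies in a block of $H$ with strictly smaller defect group. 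Your implication is valid here, but only because $H$ itself has cyclic Sylow $p$-subgroups and you are silently reusing the fact (stated earlier for $B$) that in a block with cyclic defect group every non-projective indecomposable has vertex equal to the full defect group---you should make that second application explicit, since ``small vertex $\Rightarrow$ small defect group of the block'' fails in general.
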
 
\begin{proof}
Condition (1) follows from \cite[Corollary 19.8]{Alp-local-book} and the construction of the simple modules in $b_1$. Note that $b_1$ is the unique block of $H$ inducing $B$; therefore, by \cite[Chapter V, Theorem 3.10]{Nagao} and the properties of the Green correspondence \cite[Chapter IV, Theorem 4.3]{Nagao}, we obtain that in condition (2), the module $W$ belongs to the block $b_1$.
If $W'$ is another indecomposable $H$-module which is a direct summand of $V_H$, then $W'$ has a vertex $P \in_H \mathfrak{Y} := \mathfrak{Y}(G, Q, H)$ (see \cite[Chapter IV, Section 4]{Nagao} for the definition). Suppose that $W'$ is non-projective and belongs to a block with defect group $Q$; then we may assume that $P$ is a non-trivial subgroup of $Q$. 
By \cite[Chapter IV, Lemma 4.1]{Nagao}, we have $P \in \mathfrak{Y}$. However, this contradicts the definition of $\mathfrak{Y}$, since no non-trivial subgroup of $Q$ is contained in $\mathfrak{Y}$. This establishes condition (2).
Condition (3) is precisely the statement of \cite[Theorem 21.1]{Alp-local-book}.
\end{proof}

\begin{lem}\label{lem:cyc2}
   Let $G$ be a finite group with a cyclic Sylow $p$-subgroup.
Let $Q$ be a non-trivial $p$-subgroup of $G$ and  $Q_1$ be the unique subgroup of $Q$ of order $p.$
Then we have
\begin{enumerate}
 \item There exists a natural bijection $$\mathscr{G}_{G,Q}:\IBrd(G\which Q)\to\IBrd(\N_G(Q_1)\which Q)$$ that is $\hH\times\Aut(G)_Q$-equivariant and corresponding characters lie in Brauer corresponding blocks.
\item Further assume that $G$ is embedded as a normal subgroup of $\widetilde{G}$ such that $\widetilde{G}/G$ is a $p'$-group, and that $\eta \in \IBrd(G \which Q)$ is a $\widetilde{G}$-invariant irreducible Brauer character of $G$ which extends to an irreducible Brauer character $\tilde{\eta}$ of $\widetilde{G}$. Then
\begin{enumerate} 
\item $\mathscr{G}_{\widetilde G,Q}(\tilde\eta)$ is an extension of $\mathscr{G}_{G,Q}(\eta)$.
\item Let $\lambda$ be a linear character of $\widetilde G$ with $G\subseteq\ker(\lambda)$. Regard $\lambda$ also as a linear character of $\N_{\widetilde G}(Q_1)$ by restriction.
Then $\mathscr{G}_{\widetilde G,Q}(\lambda\tilde\eta)=\lambda \mathscr{G}_{\widetilde G,Q}(\tilde\eta)$.
\end{enumerate}
\end{enumerate}
\end{lem}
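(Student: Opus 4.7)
The plan for part (1) is to glue the bijections of Lemma \ref{lem:cyc1}(3) over all blocks $B$ of $G$ with defect group $Q$. For each such $B$, the Brauer correspondence produces a unique block $b_1$ of $\N_G(Q_1)$ with defect group $Q$ (via the intermediate block of $\N_G(Q)$ and block induction), and Lemma \ref{lem:cyc1}(3) supplies a bijection $\mathscr{G}_B : \IBr(B) \to \IBr(b_1)$ sending a character to the Brauer character of the socle of the Green correspondent of the underlying simple module. Since $\IBrd(G \which Q)$ is the disjoint union of the $\IBr(B)$ over such $B$, and likewise $\IBrd(\N_G(Q_1)\which Q)$ is the disjoint union of the $\IBr(b_1)$, the union of the $\mathscr{G}_B$ is the desired bijection $\mathscr{G}_{G,Q}$, and it matches Brauer corresponding blocks by construction.

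For the $\hH \times \Aut(G)_Q$-equivariance in part (1), the key observation is that $Q_1$, being the unique subgroup of $Q$ of order $p$, is stabilized by every $a \in \hH \times \Aut(G)_Q$; hence $\N_G(Q_1)$ is stabilized as well. The action of $a$ on $FG$-modules (via $G$ and via scalar twisting by $\hH$) is a naturally defined functor that preserves indecomposability and vertices, commutes with the restriction and induction functors used to define the Green correspondence, permutes blocks compatibly with the Brauer correspondence, and commutes with taking socles. Combining these naturality statements, $\mathscr{G}_{G,Q}$ intertwines the two $\hH \times \Aut(G)_Q$-actions.

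For part (2a), let $V$ and $\tilde V$ be the simple $FG$- and $F\widetilde G$-modules affording $\eta$ and $\tilde\eta$. Because $\widetilde G / G$ is a $p'$-group and $\tilde\eta$ extends the $\widetilde G$-invariant $\eta$, the restriction $\tilde V_G$ is isomorphic to $V$. By standard Clifford theory in $p'$-extensions, $\tilde V$ and $V$ share the same vertex, and $Q$ is still a defect group for the block of $\tilde\eta$; moreover Sylow $p$-subgroups of $\widetilde G$ lie in $G$. I would verify that under these hypotheses the Green correspondent $\tilde W$ of $\tilde V$ in $\N_{\widetilde G}(Q_1)$ restricts to the Green correspondent $W$ of $V$ in $\N_G(Q_1)$, and that $\soc(\tilde W)_{\N_G(Q_1)} \cong \soc(W)$. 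Since the restriction is irreducible, this means exactly that $\mathscr{G}_{\widetilde G, Q}(\tilde\eta)$ is an extension of $\mathscr{G}_{G,Q}(\eta)$.

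For part (2b), the one-dimensional $F\widetilde G$-module $F_\lambda$ affording $\lambda$ has trivial vertex (it is trivial as an $FG$-module and Sylow $p$-subgroups of $\widetilde G$ lie in $G$). Tensoring with $F_\lambda$ is an invertible endofunctor that preserves vertices and indecomposability, commutes with restriction and induction, and therefore commutes with the Green correspondence: the Green correspondent of $\tilde V \otimes F_\lambda$ is $\tilde W \otimes F_\lambda|_{\N_{\widetilde G}(Q_1)}$. Since taking socles also commutes with tensoring by a one-dimensional module, reading off Brauer characters yields $\mathscr{G}_{\widetilde G, Q}(\lambda\tilde\eta) = \lambda \, \mathscr{G}_{\widetilde G, Q}(\tilde\eta)$. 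The main technical obstacle throughout part (2), and the step where I would invest the most care, is the compatibility of the Green correspondence with restriction along a $p'$-extension and with tensoring by a one-dimensional $p'$-representation; everything else is naturality and tracking of socles.
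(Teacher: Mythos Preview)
Your approach is essentially the paper's: glue the block-by-block bijections from Lemma~\ref{lem:cyc1}(3), deduce equivariance from the uniqueness characterization of the Green correspondent in Lemma~\ref{lem:cyc1}(2), and for (2b) use that tensoring with a one-dimensional module commutes with the Green correspondence and with socles (again via the uniqueness in Lemma~\ref{lem:cyc1}(2)).

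The one place where you flag ``I would verify'' without a mechanism is precisely the substantive step in (2a): why does $\widetilde W_{\N_G(Q_1)}\cong W$ (rather than $W$ being merely a summand)? The paper's argument is worth knowing. Writing $H=\N_G(Q_1)$ and $\widetilde H=\N_{\widetilde G}(Q_1)$, one decomposes $\widetilde V_{\widetilde H}=\widetilde W\oplus\widetilde W'$ with $\widetilde W'$ a sum of projectives and of modules in blocks of defect $\ne Q$, so restricting to $H$ shows $W\mid \widetilde W_H$. If $\widetilde W_H\cong W\oplus W'$ with $W'\ne 0$, then by Lemma~\ref{lem:cyc1}(2) each summand of $W'$ lies in a block of $H$ with defect group $Q$ and hence (again by that uniqueness) must be projective. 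But since $\widetilde H/H$ is a $p'$-group, every simple $\widetilde H$-module restricts semisimply to $H$, so the radical length of $\widetilde W_H$ is at most that of $\widetilde W$; as $\widetilde W$ is non-projective in a cyclic-defect block, its radical length is strictly less than $|Q|$, contradicting the presence of a projective summand (which has radical length $|Q|$). Hence $\widetilde W_H\cong W$, which is then uniserial, and $\soc(\widetilde W)_H\cong\soc(W)$ follows. This radical-length/uniseriality argument is the content you would need to supply; the rest of your sketch matches the paper.
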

\begin{proof}
  The bijection $\mathscr{G}_{G,Q}$ is constructed by putting together the bijections in Lemma \ref{lem:cyc1} (3) for all the blocks of $G$ with defect group $Q.$ To see this bijection is $\hH\times\Aut(G)_Q$-equivariant, we can use the uniqueness properties of the Green correspondence, as stated in Lemma \ref{lem:cyc2} (2). This proves the condition (1) of the lemma.
  
  Now we are in the setting of the condition (2). By Fong's theorem \cite[Chapter V, Theorem 5.16]{Nagao}, we have $\tilde{\eta}\in\IBrd(\widetilde{G}\which Q).$  Let $\widetilde{H}=\N_{\widetilde{G}}(Q_1)$ and $H=\N_G(Q_1).$ As $\eta$ is $\widetilde{G}$-invariant we have $\widetilde{G}=G\widetilde{H}.$ Let $\widetilde{V}$ be an indecomposable $\widetilde{G}$-module affording the irreducible Brauer character $\tilde{\eta}.$  Thus by our assumption $V:=\widetilde{V}_G$ is an irreducible $G$-module affording the Brauer character $\eta.$ 
  Let ${\widetilde{W}}$ be the $(\widetilde{G},Q,\widetilde{H})$-Green correspondent of $\widetilde{V}.$

  By Lemma \ref{lem:cyc1} (2) we have $\widetilde{V}_{\widetilde{H}}\cong \widetilde{W}\oplus \widetilde{W}',$ where $\widetilde{W}'$ is a direct sum of projective $\widetilde{H}$-modules and modules lying in blocks of $\widetilde{H}$ with defect group other than $Q.$ Noticing that $V_H=(\widetilde{V}_{\widetilde{H}})_H\cong \widetilde{W}_H\oplus \widetilde{W}'_H$ and  $\widetilde{W}'_H$ is a direct summand of projective $H$-modules and modules lying in blocks with defect group other than $Q,$ we have that the $(G,Q,H)$-Green correspondent $W$ of $V$ is isomorphic to a direct summand of $\widetilde{W}_H.$
  We want to prove that $W\cong \widetilde{W}_H.$ Otherwise $\widetilde{W}_H\cong W\oplus W'$ for some nonzero $H$-module $W'.$ By Lemma \ref{lem:cyc1} (1) and the fact that $\tilde{\eta}\in\IBrd(\widetilde{G}\which Q),$ we have that $\widetilde{W}$ belongs to a block with defect group $Q.$ Hence the direct summands of $\widetilde{W}_H$ all belong to blocks with defect group $Q.$ By Lemma \ref{lem:cyc1} (2), we have that $W'$ is projective.  On the other hand, as any irreducible $\widetilde{H}$-module restricts to a completely irreducible $H$-module, we have that the radical length of $\widetilde{W}_H$ is no larger than that of $\widetilde{W}.$ 
  Recall that an indecomposable $\widetilde{H}$-module lying in a block with defect group $Q$ is projective if and only if it has the maximal radical length, which is exactly the order of $Q$ (see \cite[Theorem 19.1, Corollary 19.8]{Alp-local-book}). 
  The same happens with indecomposable $H$-modules lying in blocks with defect group $Q.$ Since $\widetilde{W}$ is not projective, its radical length is less than $|Q|.$ By the analysis above, the radical length of $\widetilde{W}_H$ is less than $|Q|,$ which contradicts the fact that $W'$ is nonzero and projective. 
  We have proved that $\widetilde{W}_H\cong W.$ Thus $\widetilde{W}_H$ is indecomposable and uniserial. This leads to the fact that $\soc(\widetilde{W})_H\cong \soc(W).$ We just proved the condition (a). 
  
  If $Z$ is the one-dimensional $\widetilde{G}$-module that correspondents to $\lambda.$ Then $Z\otimes\widetilde{V}$ is an irreducible $\widetilde{G}$-module that correspondents to $\lambda\tilde{\eta}.$ Since $Z_{\widetilde{H}}\otimes\widetilde{W}$ is an indecomposable direct summand of $(Z\otimes \widetilde{V})_{\widetilde{H}}$ that is non-projective and belongs to a block with defect group Q, by the uniqueness properties in Lemma \ref{lem:cyc1} (2) we have that $Z_{\widetilde{H}}\otimes \widetilde{W}$ is the Green correspondent of $Z\otimes\widetilde{V}.$ As $\soc(Z_{\widetilde{H}}\otimes\widetilde{W})=
  Z_{\widetilde{H}}\otimes\soc(\widetilde{W})$ we have $\mathscr{G}_{\widetilde G,Q}(\lambda\tilde\eta)=\lambda \mathscr{G}_{\widetilde G,Q}(\tilde\eta)$ by our definition.
\end{proof}

\begin{prop}\label{lem:Dade-cyclic}
Let $G\zg \widetilde A$ be finite groups, and assume that $G$  has a cyclic Sylow $p$-subgroup. 
Let $Q$ be a $p$-subgroup of $G.$ 
Then for every subgroup $\widetilde{G}$ of $\widetilde{A}$ such that $G\subseteq \widetilde{G}$  and $\widetilde{G}/G$ is a $p'$-group, there exists an $\hH\times\N_{\widetilde{A}}(\widetilde{G},Q)$-equivariant bijection 
$$
   f_{\widetilde{G},Q}:\IBrd(\widetilde{G}\which Q)\to\IBrd(\N_{\widetilde{G}}(Q)\which Q),
$$ where $\N_{\widetilde{A}}(\widetilde{G},Q)= \N_{\widetilde{A}}(\widetilde{G})\cap \N_{\widetilde{A}}(Q),$
 such that corresponding characters lie in Brauer corresponding blocks. 
  Moreover, if $\eta\in\IBrd(G\which Q)$ extends to an irreducible Brauer character $\tilde{\eta}$ of $\widetilde{G}$ for such a $\widetilde{G},$ then 
\begin{enumerate}  
\item $f_{\widetilde G,Q}(\tilde\eta)$ is an extension of $f_{G,Q}(\eta)$.
\item Let $\lambda$ be a linear character of $\widetilde G$ with $G\subseteq\ker(\lambda)$. Regard $\lambda$ also as a linear character of $\N_{\widetilde G}(Q)$ by restriction.
We have $f_{\widetilde G,Q}(\lambda\tilde\eta)=\lambda f_{\widetilde G,Q}(\tilde\eta)$.
\end{enumerate}
\end{prop}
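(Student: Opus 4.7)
The case $Q=1$ is trivial: then $\N_{\widetilde G}(1)=\widetilde G$, so I take $f_{\widetilde G,1}$ to be the identity and every asserted property is vacuous. Assume henceforth that $Q\neq 1$. Since $\widetilde G/G$ is a $p'$-group, a Sylow $p$-subgroup of $G$ is also a Sylow $p$-subgroup of $\widetilde G$; in particular $\widetilde G$ has a cyclic Sylow $p$-subgroup, so $Q$ is itself cyclic and has a unique subgroup $Q_1$ of order $p$. My plan is to induct on $|Q|$; the base case $|Q|=p$ is exactly Lemma~\ref{lem:cyc2} applied with $Q=Q_1$, which already supplies the bijection together with properties~(1), (2), and the required $\hH\times\N_{\widetilde A}(\widetilde G,Q)$-equivariance (the latter since every $a\in\N_{\widetilde A}(\widetilde G,Q)$ induces an element of $\Aut(\widetilde G)_Q$).

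For $|Q|>p$ the idea is first to apply Lemma~\ref{lem:cyc2} to $\widetilde G$ to get
\[
\mathscr G_{\widetilde G,Q}\colon\IBrd(\widetilde G\which Q)\longrightarrow \IBrd(\N_{\widetilde G}(Q_1)\which Q),
\]
and then to reduce modulo $Q_1$. Since $Q_1$ is a normal $p$-subgroup of $\N_{\widetilde G}(Q_1)$ contained in every defect group in sight, Lemma~\ref{thm_6.02}(2) identifies $\IBrd(\N_{\widetilde G}(Q_1)\which Q)$ with $\IBrd(\N_{\widetilde G}(Q_1)/Q_1\which Q/Q_1)$, and identifies $\IBrd(\N_{\widetilde G}(Q)\which Q)$ with $\IBrd(\N_{\widetilde G}(Q)/Q_1\which Q/Q_1)$. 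The group $\N_G(Q_1)/Q_1$ is normal in $\N_{\widetilde A}(Q_1)/Q_1$ and has cyclic Sylow $p$-subgroup, and $\N_{\widetilde G}(Q_1)/\N_G(Q_1)$ embeds in $\widetilde G/G$, hence is a $p'$-group. Thus the inductive hypothesis applies to the triple $(\N_{\widetilde A}(Q_1)/Q_1,\,\N_G(Q_1)/Q_1,\,Q/Q_1)$ with $\N_{\widetilde G}(Q_1)/Q_1$ in the role of $\widetilde G$; this yields a bijection $\IBrd(\N_{\widetilde G}(Q_1)/Q_1\which Q/Q_1)\to\IBrd(\N_{\widetilde G}(Q)/Q_1\which Q/Q_1)$, and I define $f_{\widetilde G,Q}$ as the composition of these three bijections.

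Propagating the listed properties should be routine. Block correspondence is preserved by Lemma~\ref{lem:cyc2} at the top step and by the behaviour of Brauer correspondence under normal $p$-subgroup quotients at the reduction step. Every $a\in\N_{\widetilde A}(\widetilde G,Q)$ stabilises $Q_1$ (which is characteristic in $Q$), induces automorphisms of $\widetilde G$ and of $\N_{\widetilde G}(Q_1)/Q_1$ fixing $Q$ and $Q/Q_1$ respectively, and its image lies in $\N_{\widetilde A}(Q_1)/Q_1$; together with Galois equivariance this delivers the required equivariance of $f_{\widetilde G,Q}$. Property~(1) follows from Lemma~\ref{lem:cyc2}(2)(a) at the top step and the inductive hypothesis at the bottom, once I note that an extension $\tilde\eta$ of $\eta$ inflates to an extension in the relevant quotient. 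Property~(2) is handled similarly using Lemma~\ref{lem:cyc2}(2)(b): a linear character $\lambda$ of $\widetilde G$ trivial on $G$ is trivial on $Q_1\leqslant G$ and so descends to a linear character of $\widetilde G/Q_1$ trivial on $G/Q_1$, and the twisting identity transfers through the composition.

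The main obstacle I anticipate is the bookkeeping of ambient groups, stabilisers, and tower-compatibility of extensions through the induction. Concretely, one must verify that the image of $\N_{\widetilde A}(\widetilde G,Q)$ in $\N_{\widetilde A}(Q_1)/Q_1$ lies inside $\N_{\N_{\widetilde A}(Q_1)/Q_1}\!\bigl(\N_{\widetilde G}(Q_1)/Q_1,\,Q/Q_1\bigr)$, so that the inductive equivariance matches the equivariance needed at the outer level, and that the extension in~(1) and the linear twist in~(2) restrict and inflate coherently through the quotient step. Once these bookkeeping issues are settled, everything else will follow formally from Lemma~\ref{lem:cyc2} and Lemma~\ref{thm_6.02}(2).
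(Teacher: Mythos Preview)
Your proposal is correct and follows essentially the same strategy as the paper: pass to $\N_{\widetilde G}(Q_1)$ via the Green-correspondence bijection $\mathscr G_{\widetilde G,Q}$ of Lemma~\ref{lem:cyc2}, then reduce modulo $Q_1$ and invoke induction. The only organisational difference is that the paper inducts on $|\widetilde A|$ and separates the two steps into distinct cases (first handling the case $Q_1\trianglelefteq\widetilde A$ by quotienting, then the case $\N_{\widetilde A}(Q_1)<\widetilde A$ by applying $\mathscr G$), whereas you induct on $|Q|$ and perform both steps at once; the bookkeeping you flag is exactly what the paper checks, and it goes through as you outline.
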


\begin{proof}
We argue by induction on $|\widetilde{A}|.$  Notice that $\widetilde{A}=G\N_{\widetilde{A}}(Q)$ since $G$ has a cyclic Sylow $p$-subgroup.  If $Q$ is trivial, the proposition is trivially satisfied. Thus we may assume that $Q\neq 1.$ Let $Q_1$ be the unique subgroup of $Q$ of order $p.$

We first consider the case that $Q_1$ is normal in $\widetilde{A}.$ 
We apply induction to $\widetilde{A}/Q_1.$ Thus for every subgroup $\widetilde{G}$ of $\widetilde{A}$ such that $G\subseteq \widetilde{G}$  and $\widetilde{G}/G$ is a $p'$-group, we have a bijection
$$
   f_{\widetilde{G}/Q_1,Q/Q_1}:\IBrd(\widetilde{G}/Q_1\which Q/Q_1)\to\IBrd(\N_{\widetilde{G}/Q_1}(Q/Q_1)\which Q/Q_1)
$$ 
that satisfies the desired properties.
By \cite[Lemma 3.3]{Nav-Ann}, a block $B$ of $\widetilde{G}$ with defect group $Q$ contains a unique block $\bar{B}$ of $\widetilde{G}/Q_1$ with defect group $Q/Q_1.$ 
Thus by inflation of characters we have $\IBrd(\widetilde G/Q_1\which Q/Q_1)=\IBrd(\widetilde G\which Q).$  
The same holds for blocks of $\N_{\widetilde{G}}(Q).$ 
We define bijections $f_{\widetilde G,Q}$ naturally from $f_{\widetilde G/Q_1,Q/Q_1}$, respectively. By \cite[Lemma 3.2]{Nav-Ann} and induction, the corresponding characters lie in Brauer corresponding blocks. Other conditions can be deduced by induction.

Let $\widetilde{A}_1:=\N_{\widetilde{A}}(Q_1)<\widetilde{A}.$ Note that $\widetilde{A}_1$ contains $\N_{\widetilde{A}}(Q).$ 
Let $\widetilde{G}$ be a subgroup of $\widetilde{A}$ such that $G\subseteq \widetilde{G}$  and $\widetilde{G}/G$ is a $p'$-group, and let $\widetilde{G}_1=\widetilde{A}_1\cap \widetilde{G}$.
By Lemma \ref{lem:cyc2},  there exists a natural bijection 
$$
  \mathscr{G}_{\widetilde{G},Q}:\IBrd(\widetilde{G}\which Q)\ra\IBrd(\widetilde{G}_1\which Q)
$$
satisfying the conditions listed there. Note that the map $\mathscr{G}_{\widetilde{G},Q}$ is $\hH\times\N_{\widetilde{A}}(\widetilde{G},Q)$-equivariant by Lemma \ref{lem:cyc2} (1).
By induction to $\widetilde{A}_1,$  there exists an $\hH\times\N_{\widetilde{A}}(\widetilde{G},Q)$-equivariant bijection 
$$
   f_{\widetilde{G}_1,Q}:\IBrd(\widetilde{G}_1\which Q)\to\IBrd(\N_{\widetilde{G}}(Q)\which Q)
$$
satisfying the desired properties. 
Define $f_{\widetilde{G},Q}=f_{\widetilde{G}_1,Q}\mathscr{G} _{\widetilde{G},Q}.$ 
By Lemma \ref{lem:cyc2} and induction, we can check that the maps $f_{\widetilde{G},Q},$ as $\widetilde{G}$ varies, have the desired properties.
\end{proof}

\begin{coro}\label{prop:cyclic-sylow}
Let $L$ be a finite non-abelian  simple group of order divisible by $p$ and  $S$ be the universal $p'$-covering group of $L$. Suppose that there exists a finite group $G$ such that $S\zg G$ and $G$ induces all automorphisms on $S$ by conjugation.
Assume that $G/S$ is cyclic, $\C_G(S)=\ZZ(G),$ and $S$ has a cyclic Sylow $p$-subgroup $P$. Then the inductive NAW condition holds for $L$ at the prime $p$.
\end{coro}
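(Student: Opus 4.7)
The plan is to verify conditions (1) and (2) of Proposition~\ref{def:ind-GAW-cond-2}, taking $U_0$ there to be the (characteristic) Sylow $p$-subgroup of $\ZZ(U)$, so that $S=U/U_0$ is the given universal $p'$-covering and $\ZZ(S)$ is a $p'$-group. Since the Sylow $p$-subgroup of $S$ is cyclic, every $p$-subgroup of $S$ is cyclic, and in particular every block of $S$ of positive defect has a cyclic defect group. Applying Proposition~\ref{lem:Dade-cyclic} with its ambient group equal to our $G$ and its normal subgroup equal to $S$ produces, for each nontrivial cyclic $p$-subgroup $Q$ of $S$, an $\hH\times\N_G(Q)$-equivariant bijection
\[
f_{S,Q}\colon \IBrd(S\which Q)\to \IBrd(\N_S(Q)\which Q),
\]
and since $Q=\bO_p(\N_S(Q))$ when $Q$ is a defect group, inflation identifies the target with $\dz(\N_S(Q)/Q)$. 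Assembling the $f_{S,Q}$ over $\Aut(S)$-orbit representatives of $\Rad(S)$ and extending equivariantly as in Remark~\ref{themethod} yields a bijection $\Omega\colon \IBr(S)\to\cW(S)$ satisfying condition~(1): it is $\hH\times\Aut(S)$-equivariant because $G$ realizes $\Aut(S)$ by conjugation, and preserves central characters on $\ZZ(S)$ because the Green correspondence underlying $f_{S,Q}$ commutes with the action of the $p'$-group $\ZZ(S)\le\N_S(Q)$.

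For condition~(2), fix $\eta\in\IBr(S)$ with $\Omega(\eta)=\overline{(Q,\varphi)}$ and an $\Aut(S)_{\eta^{\hH}}$-invariant subgroup $Z\le\ZZ(S)\cap\ker\eta$. Let $\pi\colon G\to\Aut(S)$ be the conjugation map, and set $\widetilde A=\pi^{-1}(\Aut(S)_{\eta^{\hH}})/Z$ and $A=\pi^{-1}(\Aut(S)_{\eta})/Z$. The hypothesis $\C_G(S)=\ZZ(G)$, together with the perfectness of $S$, forces $\C_{\widetilde A}(\bar S)=\ZZ(G)/Z=\ZZ(A)$ and gives condition~(2.a). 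Since $G/S$ is cyclic, so is $A/\bar S$, and classical extension theorems provide extensions of invariant irreducible Brauer characters along cyclic factor groups---first over the Hall $p'$-part, then uniquely over the residual $p$-quotient, as $p$-groups have only the trivial irreducible Brauer character. This produces extensions $\tilde\eta\in\IBr(A)$ of $\eta$ and $\tilde\varphi\in\IBr(\N_A(\bar Q))$ of $\varphi^{\circ}$; twisting by appropriate linear characters of $A/\bar S$ aligns their central characters on $\ZZ(A)$, giving~(2.b).

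For the compatibility~(2.c), let $A_{p'}/\bar S$ be the unique Hall $p'$-subgroup of the cyclic group $A/\bar S$; being characteristic in $A$, it is normalized by $\N_{\widetilde A}(\bar Q)$. Choose the extensions so that $\tilde\varphi|_{\N_{A_{p'}}(\bar Q)}=f_{A_{p'},\bar Q}(\tilde\eta|_{A_{p'}})$, which is permitted by Proposition~\ref{lem:Dade-cyclic} applied with $\widetilde G=A_{p'}$. For any $a\in(\hH\times\N_{\widetilde A}(\bar Q))_{\varphi}$ with $\tilde\eta^{a}=\mu_a\tilde\eta$ and $\tilde\varphi^{a}=\mu'_a\tilde\varphi$, the $\hH\times\N_{\widetilde A}(A_{p'},\bar Q)$-equivariance of $f_{A_{p'},\bar Q}$ combined with the multiplicativity $f_{A_{p'},\bar Q}(\lambda\tilde\eta|_{A_{p'}})=\lambda\,f_{A_{p'},\bar Q}(\tilde\eta|_{A_{p'}})$ from Proposition~\ref{lem:Dade-cyclic}~(2) forces $\mu_a=\mu'_a$ after restriction to $A_{p'}/\bar S$; and since linear Brauer characters are trivial on the $p$-quotient $A/A_{p'}$, they in fact agree on all of $A/\bar S$.

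The main obstacle is the simultaneous satisfaction of~(2.b) and~(2.c): the twist by linear characters of $A/\bar S$ used to align central characters in~(2.b) must preserve the identity $\tilde\varphi|_{\N_{A_{p'}}(\bar Q)}=f_{A_{p'},\bar Q}(\tilde\eta|_{A_{p'}})$ needed in~(2.c). The multiplicativity in Proposition~\ref{lem:Dade-cyclic}~(2) is precisely what allows this, since twisting $\tilde\eta$ by $\lambda$ twists $\tilde\varphi$ by the same $\lambda$; hence the two conditions can be arranged through a single coherent choice of extensions.
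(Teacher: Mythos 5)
Your overall strategy is close to the paper's: both rest on Proposition~\ref{lem:Dade-cyclic} (the Green/Dade bijections $f_{\widetilde G,Q}$), on extending $\eta$ and $\varphi^{\circ}$ along the cyclic quotient via the Hall $p'$-part, and on the equivariance plus multiplicativity of $f_{A_{p'},\bar Q}$ to force $\mu_a=\mu'_a$. The difference is that you route everything through the criterion of Proposition~\ref{def:ind-GAW-cond-2}, while the paper verifies Definition~\ref{Def:ind-GAW} directly (via Remark~\ref{rem:ind-GAW2} and an explicit pair $(\tilde\eta,\tilde\varphi)$ giving $(\widetilde A,S,\eta)_{\hH}\geqslant_c(\N_{\widetilde A}(Q),\N_S(Q),\varphi)_{\hH}$ with $\widetilde A=G_{\eta^{\hH}}$, $A=G_\eta$), which avoids ever introducing the quotient by $Z$.

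There is, however, a genuine gap in your reconciliation of (2.b) and (2.c). Your final paragraph argues that twisting $\tilde\eta$ by a linear character $\lambda$ of $A/\bar S$ forces the compatible twist $\lambda\tilde\varphi$ (by Proposition~\ref{lem:Dade-cyclic}(2)), and that this ``single coherent choice'' arranges both conditions. But a simultaneous twist multiplies \emph{both} central characters by $\lambda|_{\ZZ(A)}$, so it cannot change whether $\IBr(\ZZ(A)\which\tilde\eta)$ and $\IBr(\ZZ(A)\which\tilde\varphi)$ agree; and a one-sided twist of $\tilde\varphi$ that preserves the identity $\tilde\varphi|_{\N_{A_{p'}}(\bar Q)}=f_{A_{p'},\bar Q}(\tilde\eta|_{A_{p'}})$ must be trivial on $A_{p'}$, hence trivial as a Brauer character. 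So your mechanism only works if (2.b) already holds for the $f$-compatible pair, and you never prove that. It does hold, but the reason is the block-theoretic property of Proposition~\ref{lem:Dade-cyclic} that corresponding characters lie in Brauer-corresponding blocks: since $\ZZ(A)\cap A_{p'}$ is a central $p'$-subgroup of $A_{p'}$, Brauer-corresponding blocks of $A_{p'}$ and $\N_{A_{p'}}(\bar Q)$ lie over the same character of it, so $\tilde\eta$ and $\tilde\varphi$ agree on $\ZZ(A)\cap A_{p'}$, and then agree on $\ZZ(A)$ because the remaining quotient is a $p$-group and Brauer characters extend uniquely over it. This is exactly the step carried out in the paper, and it must replace your twisting argument. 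A secondary, smaller omission: your choice $\tilde\varphi|_{\N_{A_{p'}}(\bar Q)}=f_{A_{p'},\bar Q}(\tilde\eta|_{A_{p'}})$ is an extension of $f_{\bar S,\bar Q}(\eta)$, so you need $f_{\bar S,\bar Q}(\eta)=\varphi^{\circ}$ as characters of $\N_{\bar S}(\bar Q)$, i.e., that the bijection of Proposition~\ref{lem:Dade-cyclic} is compatible with the central $p'$-quotient by $Z$; this is true (the Green correspondent of a module with $Z$ in its kernel again has $Z$ in its kernel), but it should be stated, and it is precisely the kind of bookkeeping the paper's direct verification of Definition~\ref{Def:ind-GAW} avoids.
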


\begin{proof}
Let $\eta\in\IBr(S)$.  Assume that  $\eta\in\IBrd(S\which  Q)$ for some $p$-subgroup $Q$ of $P.$
By Proposition \ref{lem:Dade-cyclic}, we have that for any $\widetilde{G}\in\mathcal{S}(G,S)$ such that $\widetilde{G}/S$ is a $p'$-group,  there exists an $(\hH\times\N_G(\widetilde{G},Q))$-equivariant bijection $$f_{\widetilde{G},Q}:\IBrd(\widetilde{G}\which Q)\ra\IBrd(\N_{\widetilde{G}}(Q)\which Q)$$ satisfying the conditions listed there.
Notice that $$f_{S,Q}:\IBrd(S\which Q)\ra\IBrd(\N_S(Q)\which Q)$$ is an $\big(\hH\times\Aut(S)_Q\big)$-equivariant bijection and the set $\IBrd(\N_S(Q)\which Q)$ corresponds naturally to the set $\dz(\N_S(Q)/Q)$ by \cite[Lemma 3.3]{Nav-Ann}. Since $\dz(\N_S(Q)/Q)$ is non-empty (as the correspondent of $f_{S,Q}(\eta)$ belongs to it), we have $Q$ is a $p$-radical subgroup of $S.$ 
Define $\Omega(\eta)=\overline{(Q,f_{S,Q}(\eta))}$ for $\eta\in\IBrd(S\which Q).$
Since the corresponding characters under the map $f_{S,Q}$ lie in Brauer corresponding blocks,  this bijection respects central characters in $\ZZ(S).$ 
As $f_{S,Q}$ is $(\hH\times\Aut(S)_Q)$-equivariant, the condition (1) in Definition \ref{Def:ind-GAW} holds.

Let $\eta$ and $Q$ be as above and $\varphi=f_{S,Q}(\eta).$
Let $\widetilde A=G_{\eta^\hH}$. 
We will prove that $$(\widetilde{A},S,\eta)_{\hH}\geqslant_c (\N_{\widetilde{A}}(Q),\N_S(Q),\varphi)_{\hH}.$$ Let $A=G_{\eta}$ and  $A_1/S$ be the Hall $p'$-subgroup of $A/S.$
Since $A/S$ is cyclic, $\eta$ extends to an irreducible Brauer character $\tilde{\eta}$ of $A.$ 
Now $\tilde{\eta}_{A_1}$ is an extension of $\eta$ to $A_1,$ thus by Proposition \ref{lem:Dade-cyclic} (1), we have $f_{A_1,Q}(\tilde{\eta}_{A_1})$ is an extension of $\varphi=f_{S,Q}(\eta)$ to $\N_{A_1}(Q).$ Let $\tilde{\varphi}$ be the unique extension of $f_{A_1,Q}(\tilde{\eta}_{A_1})$ to $\N_A(Q).$ 
We are going to prove that $(\tilde{\eta},\tilde{\varphi})$ gives the above ordering of $\hH$-triples.
First note that $\C_{\widetilde{A}}(S)=\ZZ(A)$ and $\ZZ(A)_1/\ZZ(S)$ is the Hall $p'$-subgroup of $\ZZ(A)/\ZZ(S),$ where $\ZZ(A)_1:=A_1\cap\ZZ(A).$ 
Since $\ZZ(A)_1$ is a central subgroup of $A_1,$ and $\tilde{\eta}_{A_1},\tilde{\varphi}_{\N_{A_1}(Q)}$ lie in Brauer corresponding blocks, we see $\IBr(\ZZ(A)_1\which \tilde{\eta})=\IBr(\ZZ(A)_1\which \tilde{\varphi}).$ 
Because any irreducible character of $\ZZ(A)_1$ extends to an unique character of $\ZZ(A)$, we have $\IBr(\ZZ(A)\which \tilde{\eta})=\IBr(\ZZ(A)\which \tilde{\varphi}).$ 
Let $a\in(\hH\times \N_{\widetilde{A}}(Q))_\varphi$. Let $\tilde \eta^{a}=\nu\tilde\eta$ and $\tilde \varphi^a=\nu'\hat\varphi$, where $\nu,\nu'$ are linear Brauer characters of $A/S=\N_A(Q)/\N_{S}(Q).$ Then by Proposition \ref{lem:Dade-cyclic} (2) and the fact that $f_{A_1,Q}(\tilde \eta_{A_1}^a)=f_{A_1,Q}(\tilde \eta_{A_1})^a,$  we have   $\nu$ and $\nu'$ agree on $\N_{A_1}(Q),$ and hence agree on $\N_A(Q).$ 
We complete proof of the Lemma.
\end{proof}

\begin{remark}\label{rmk:lie}
Let $L=\textup{PSL}_2(q)$ be a simple group with non-exceptional Schur multiplier, i.e., $\textup{SL}_2(q)$ is the universal covering group of $L$. We claim that if $\textup{Out}(L)$ is cyclic, then the group $E$ in Corollary~\ref{prop:cyclic-sylow} exists.
Let $\mathcal G=\textup{SL}_2(\overline{\mathbb F}_q)$ and $F_0:\mathcal G\to\mathcal G$ be the Frobenius map induced by the map $x\to x^{q_0}$ in $\overline{\mathbb F}_q$. Here $q=q_0^f$ with a prime $q_0$ and a positive integer $f$. Set $F=F_0^f$. Then $\mathcal G^F=\textup{SL}_2(q)$.  
Let $\mathcal L:\mathcal G\to\mathcal G$ be the Lang map defined by $\mathcal L(g)=g^{-1}F(g)$.
Then $\mathcal L^{-1}(\ZZ(\mathcal G))$ is a finite group containing $\mathcal G^F$ as its normal subgroup, and $\textup{Aut}(\mathcal G^F)\cong (\mathcal L^{-1}(\ZZ(\mathcal G))\rtimes \langle F_0\rangle)/\ZZ(\mathcal G^F)$.
Therefore, $\mathcal G^F/\ZZ(\mathcal G^F)_p$ is the universal $p'$-covering group of $L$, and
we can take $E=(\mathcal L^{-1}(\ZZ(\mathcal G))\rtimes \langle F_0\rangle)/\ZZ(\mathcal G^F)_p$.
\end{remark}

\begin{prop}\label{lem:Janko}
The inductive NAW condition holds for the Janko group $J_1$ at every prime.
\end{prop}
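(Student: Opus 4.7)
The plan is to apply Proposition~\ref{def:ind-GAW-cond-2} to $L=J_1$. Since the Janko group has trivial Schur multiplier and trivial outer automorphism group, for every prime $p$ dividing $|J_1|$ the universal $p'$-covering group is $S=J_1$ itself, with $\Aut(S)=\Inn(S)$ and $\ZZ(S)=1$. These three facts make condition~(2) of Proposition~\ref{def:ind-GAW-cond-2} essentially vacuous: take $Z=1$ and $\tilde{A}=A=S$. Then $\C_{\tilde A}(\bar S)=\ZZ(A)=1$, so (2.a) reduces to $S\cong\Aut(S)$, (2.b) is satisfied by the trivial extensions $\tilde\eta=\eta$ and $\tilde\varphi=\varphi^\circ$, and in (2.c) the linear characters $\mu_a,\mu'_a$ of the trivial quotient $A/\bar S$ are both trivial. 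Furthermore, any bijection $\Omega:\IBr(J_1)\to\cW(J_1)$ is automatically $\Aut(S)$-equivariant, as $\Inn(S)$ acts trivially on both sides. Hence, for each prime $p\mid|J_1|$, it suffices to exhibit an $\hH$-equivariant bijection $\Omega$; the preservation of central characters is automatic since $\ZZ(J_1)=1$.

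For the primes $p\in\{3,5,7,11,19\}$, each of which divides $|J_1|$ exactly once, a Sylow $p$-subgroup of $J_1$ is cyclic of order $p$. Setting $G=S=J_1$ fulfils all hypotheses of Corollary~\ref{prop:cyclic-sylow}: indeed $G/S=1$ is cyclic, $\C_G(S)=\ZZ(G)=1$, and $G$ trivially induces $\Aut(S)=\Inn(S)$ by conjugation. Consequently, the inductive GAW condition holds for $J_1$ at each of these primes.

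The remaining case is $p=2$, where a Sylow $2$-subgroup of $J_1$ is elementary abelian of order~$8$, so Corollary~\ref{prop:cyclic-sylow} does not apply. Here the argument parallels that of Proposition~\ref{prop:3-groups}. The Alperin weight conjecture is known for $J_1$, so there exists a bijection $\Omega:\IBr(J_1)\to\cW(J_1)$, which one may choose so that $\Omega(\eta)=\overline{(1,\eta^{\circ})}$ for every $\eta$ lying in a $2$-block of defect zero. On this defect-zero portion of $\IBr(J_1)$, the equivariance $\Omega(\eta^a)=\Omega(\eta)^a$ is automatic. For characters in blocks of positive defect, the plan is to consult the $2$-modular Brauer character table of $J_1$ in \cite{JLPW}, enumerate (up to $J_1$-conjugacy) the $2$-radical subgroups $Q>1$ of $J_1$ together with the structure of $\N_{J_1}(Q)/Q$, and verify that every $\eta\in\IBr(J_1)$ in a block of positive defect and every $\varphi\in\dz(\N_{J_1}(Q)/Q)$ arising in a weight is $\hH$-invariant. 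These two invariance statements together ensure that $\Omega$ is $\hH$-equivariant.

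The main obstacle is the finite computational check at $p=2$: one must confirm that any Galois-conjugate pair of irreducible Brauer characters, either of $J_1$ or of a local normalizer quotient $\N_{J_1}(Q)/Q$, lies entirely within a block of defect zero, where no equivariance is required. This reduces to inspecting a small set of local subgroups (the normalizers of the non-trivial $2$-radical subgroups of $J_1$) and the columns of the $2$-modular Brauer table of $J_1$; the verification is routine given the explicit tables but constitutes the only non-trivial ingredient of the proof.
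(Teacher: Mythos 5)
Your reduction of the problem to exhibiting an $\hH$-equivariant bijection $\Omega:\IBr(J_1)\to\cW(J_1)$ (using the trivial Schur multiplier and trivial outer automorphism group of $J_1$), and your treatment of the odd primes via Corollary~\ref{prop:cyclic-sylow}, agree with the paper. The gap is in your plan for $p=2$: you propose to verify that every irreducible Brauer character of $J_1$ lying in a $2$-block of positive defect, and every defect zero character of $\N_{J_1}(Q)/Q$ occurring in a weight with $Q>1$, is $\hH$-invariant --- equivalently, that every Galois-conjugate pair lies entirely inside defect zero blocks. That verification would fail, because the statement is false: the $2$-modular Brauer characters $\varphi_3$ and $\varphi_4$ of $J_1$ lie in a block of positive defect and are interchanged by $\sigma_2$, and for $Q$ a Sylow $2$-subgroup (elementary abelian of order $8$) one has $\N_{J_1}(Q)/Q\cong C_7\rtimes C_3$, whose five irreducible characters all have $2$-defect zero and whose two nontrivial linear characters are interchanged by $\sigma_2$. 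So $\hH$-invariance does not hold on either side, and the criterion you set yourself cannot be met.

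What actually has to be checked --- and what the paper checks --- is that the $\hH$-orbit structures on the two sides match. On the Brauer side, outside the five defect zero characters one finds four $\sigma_2$-fixed characters ($\varphi_1,\varphi_2,\varphi_7,\varphi_8$) together with the $2$-orbit $\{\varphi_3,\varphi_4\}$; on the weight side, outside the defect zero blocks one finds the $\hH$-fixed weight class with radical subgroup $C_2$, the three $\hH$-fixed characters $1$, $\chi_{3a}$, $\chi_{3b}$ of $C_7\rtimes C_3$, and the $2$-orbit $\{\chi_{1a},\chi_{1b}\}$. Matching fixed points to fixed points and the unique $2$-orbit on one side to the unique $2$-orbit on the other produces the required $\hH$-equivariant bijection. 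Your argument can be repaired by replacing the invariance claim with this orbit-matching computation, but as written the decisive step at $p=2$ rests on an assertion that the tables refute.
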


\begin{proof}
  Let $S=J_1$.
Note that $|S|=2^3\cdot 3\cdot 5\cdot 7\cdot 11\cdot 19$, thus by Corollary~\ref{prop:cyclic-sylow} we only need to consider the prime 2.
Since the Schur multiplier and outer automorphism group of $J_1$ are trivial (see e.g. \cite[p.36]{CCNPW85}), it suffices to establish a $\hH$-equivariant bijection between $\IBr(S)$ and $\cW(S)$.
By \cite[p.82]{JLPW} and \cite[Table 12]{AD12}, we get $|\IBr(S)|=\cW(S)|=11$, i.e., the original Alperin weight conjecture holds for $S$.

According to the character table of $S$ (cf. \cite[p.36]{CCNPW85}), the group $S$ has five blocks of defect zero, that is, the blocks containing $\chi_2$, $\chi_3$, $\chi_9$, $\chi_{10}$, $\chi_{11}$ (under the notation of \cite[p.36]{CCNPW85}).
Under the notation of \cite[p.82]{JLPW}, the irreducible Brauer characters of $S$ lying in blocks of defect zero are $\varphi_5$, $\varphi_6$, $\varphi_9$, $\varphi_{10}$, $\varphi_{11}$; we denote the set consisting of these five Brauer characters by $I$. 
Let $W$ denote the set of conjugacy classes of weights lying in blocks of defect zero. Then if $\overline{(Q,\varphi)}\notin W$, then $Q$ is cyclic of order 2 or an elementary abelian 2-group of order 8. 
The action of $\hH$ on the irreducible Brauer characters and the conjugacy classes of weights in blocks of defect zero are induced by the action of  $\hH$ on those blocks.
Hence the inductive NAW condition holds if we show that there is a $\hH$-equivariant bijection between $\IBr(S)\setminus I$ and $\cW(S)\setminus W$.

Using the Brauer character table of $S$ in \cite[p.82]{JLPW} we can check directly that $\hH$ fixes every $\varphi_k$, if $k=1,2,7,8$. 
Let $\sigma_2$ denote the $2$-Frobenius automorphism. The Brauer characters $\varphi_3$ and $\varphi_4$ are switched by $\sigma_2$.
Under the notation of \cite[Table 12]{AD12}, the conjugacy class of weights with radical subgroup $C_2$ are fixed by $\hH$.
The radical subgroup $Q=(C_2)^3$ afforded five weights as $N_S(Q)/Q\cong C_7\rtimes C_3$. For convenience we refer the character table of $C_7\rtimes C_3$ to \cite[p.303]{We16} and use the notation therein.
Then the characters $\chi_{1}$, $\chi_{3a}$, $\chi_{3b}$ are fixed by $\hH$, while $\chi_{1a}$ and $\chi_{1b}$ are switched by $\sigma_2$.

Therefore, we can get a $\hH$-equivariant bijection between $\IBr(S)\setminus I$ and $\cW(S)\setminus W$, which completes the proof.
\end{proof}

We will make use of the so-called unitriangular basic set (for its definition, see e.g. \cite[Definition~3.8]{FS23}).

\begin{lem}\label{lem:unitri}
  Let $A$ be a finite group, $G$ a normal subgroup of $A$, and $B$ a union of some blocks of $G$. Suppose that there exists a unitriangular $(A\times \hH)$-stable basic set $X\subseteq\Irr(B)$. Then the following statements hold.
  \begin{enumerate}
  \item There exists an  $(A\times \hH)$-equivariant bijection $f$ between $X$ and $\IBr(B)$.
\item Let $\chi\in X$ and $\psi=f(\chi)$. If $\chi$ has an $(A\times \hH)_\chi$-invariant extension to $A_\chi$, then $\psi$ has an $(A\times \hH)_\psi$-invariant extension to $A_\psi$.
\end{enumerate}
\end{lem}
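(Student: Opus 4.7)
The plan is to exploit the unitriangular structure of the decomposition matrix in both parts. First I would fix orderings $X = \{\chi_1, \ldots, \chi_n\}$ and $\IBr(B) = \{\psi_1, \ldots, \psi_n\}$ witnessing the unitriangularity, so that
\[
\chi_i^{\circ} = \psi_i + \sum_{j<i} d_{ij}\psi_j,\qquad d_{ij}\in\Z_{\geq 0}.
\]
For part (1), I would set $f(\chi_i):=\psi_i$ and verify $(A\times\hH)$-equivariance as follows. Each $a\in A\times\hH$ induces permutations $\sigma$ of $X$ and $\tau$ of $\IBr(B)$, and the $(A\times\hH)$-invariance of decomposition numbers gives $d_{\sigma(i),\tau(j)}=d_{ij}$. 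I would prove $\sigma=\tau$ by induction on $i$. For the base case, $\chi_1^{\circ}=\psi_1$ is a single Brauer character, so $\chi_{\sigma(1)}^{\circ}=\psi_{\tau(1)}$ is also a single Brauer character; comparing with the unitriangular expression of $\chi_{\sigma(1)}^{\circ}$ forces $\sigma(1)=\tau(1)$. For the inductive step, applying $a$ to the expansion of $\chi_i^{\circ}$ and using $\sigma(j)=\tau(j)$ for $j<i$ together with the injectivity of $\sigma$, I would match the coefficient of $\psi_{\sigma(i)}$ in the two expressions for $\chi_{\sigma(i)}^{\circ}$ and conclude $\tau(i)=\sigma(i)$.

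For part (2), write $T:=A_\chi$ and $\Gamma:=(A\times\hH)_\chi$; the equivariance of $f$ established in (1) yields $T=A_\psi$ and $\Gamma=(A\times\hH)_\psi$. I would consider the Brauer character $\tilde\chi^{\circ}$ of $T$ and expand it as $\tilde\chi^{\circ}=\sum_{\phi\in\IBr(T)} n_\phi\,\phi$. For $\phi$ lying over $\psi$, Clifford theory for Brauer characters gives $\phi_G=e_\phi\psi$ with $e_\phi\geq 1$, and $e_\phi=1$ precisely when $\phi$ extends $\psi$ to $T$. Restricting $\tilde\chi^{\circ}$ to $G$ and comparing with the multiplicity of $\psi$ in $\chi^{\circ}$ (which equals $1$ by unitriangularity) yields
\[
\sum_{\phi\text{ over }\psi} n_\phi e_\phi = 1.
\]
Non-negativity then forces a unique $\phi_0$ with $n_{\phi_0}=e_{\phi_0}=1$; I would set $\tilde\psi:=\phi_0$, which is an extension of $\psi$ to $T$. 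Since $\tilde\chi$ is $\Gamma$-invariant, so is $\tilde\chi^{\circ}$, and the uniqueness of $\tilde\psi$ among the extensions of $\psi$ appearing in $\tilde\chi^{\circ}$ then yields the $\Gamma$-invariance of $\tilde\psi$.

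The hard part will be in part (1): the purely combinatorial verification that the permutations $\sigma$ and $\tau$ induced by an arbitrary $a\in A\times\hH$ coincide rests delicately on the unitriangular shape of the decomposition matrix together with the non-negativity and integrality of decomposition numbers. Once part (1) is secured, part (2) is essentially forced by the multiplicity-one phenomenon combined with Clifford theory for Brauer characters.
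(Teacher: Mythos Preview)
Your proposal is correct and follows essentially the same approach as the paper. The paper dispatches part (1) by citing \cite[Lemma~2.1]{Na23} and the key uniqueness step in part (2) by citing \cite[Lemma~2.9]{FLZ19}; what you have written is precisely the content of those two lemmas spelled out in full, so your proof is a self-contained version of the paper's argument rather than a different route.
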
  

\begin{proof}
The assertion (1) follows by \cite[Lemma~2.1]{Na23} immediately.
Write \[\chi^{\circ}=\psi+\sum_{\phi\in\IBr(B)\setminus \{\psi\}}d_\phi\phi\] with $d_\phi\in\mathbb Z_{\ge 1}$.
By (1), $(A\times \hH)_{\chi}=(A\times \hH)_{\psi}$.
Suppose that $\tilde\chi$ is an $(A\times \hH)_\chi$-invariant extension of $\chi$ to $A_\chi$.
As in the proof of \cite[Lemma~2.9]{FLZ19}, there is a unique extension $\tilde\psi\in\IBr(A_\chi\which  \psi)$ of $\psi$ to $A_\chi$ such that $\tilde\psi$ is an irreducible constituent of $\tilde\chi^{\circ}$. 
Now $\chi$, $\psi$ and $\tilde\chi$ are all $(A\times \hH)_\chi$-invariant, so we get that $\tilde\psi$ is also $(A\times \hH)_\psi$-invariant by construction.
\end{proof}  

\begin{prop}\label{prop:sl2}
The inductive NAW condition holds for simple groups $\textup{PSL}_2(q)$ with $2\which  q$ or $q\equiv \pm 3\ \textup{(mod 8)}$ at every prime $p$.
\end{prop}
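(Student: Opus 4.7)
My plan is to separate the argument according to the prime $p$ relative to the defining characteristic $p_0$ of $L=\textup{PSL}_2(q)$ (write $q=p_0^f$). When $p=p_0$, the inductive GAW condition holds at once by Theorem~\ref{thm:def-char}. When $p\neq p_0$ and the Sylow $p$-subgroup of $L$ is cyclic -- which covers every situation except $p=2$ with $q$ odd -- I will invoke Corollary~\ref{prop:cyclic-sylow} after producing the ambient group via Remark~\ref{rmk:lie}. To apply the latter I need $\textup{Out}(L)$ cyclic and the Schur multiplier of $L$ non-exceptional. For $q$ even, $\textup{Out}(L)\cong C_f$ is cyclic. For $q$ odd with $q\equiv\pm 3\ \textup{(mod 8)}$, a reduction modulo $8$ forces $f$ to be odd, whence $\textup{Out}(L)\cong C_2\times C_f$ is cyclic by coprimality of $2$ and $f$. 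The Schur multiplier is non-exceptional except for $q\in\{4,9\}$; the case $q=9$ is excluded by hypothesis, while $q=4$ gives $L\cong A_5\cong\textup{PSL}_2(5)$ and can be handled using the $q=5$ presentation.

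The remaining case is $p=2$ with $q$ odd and $q\equiv\pm 3\ \textup{(mod 8)}$, where a Sylow $2$-subgroup of $L$ is the Klein four group $V_4$. Since the Schur multiplier $C_2$ is a $2$-group, the universal $2'$-covering group is $S=L$ itself, and I plan to verify Proposition~\ref{def:ind-GAW-cond-2} directly. Up to conjugacy the $2$-radical subgroups of $L$ are $\{1\}$ and $V_4$, with $\N_L(V_4)\cong A_4$ and $\N_L(V_4)/V_4\cong C_3$; since $|C_3|_2=1$, every character of $C_3$ has defect zero. On the Brauer side, $\IBr(L)$ partitions into the defect-zero Brauer characters and the three Brauer characters of the principal $2$-block of $L$ (the latter count coming from the Morita equivalence between that block and the principal $2$-block of $A_4$). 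I will define $\Omega$ by pairing each defect-zero Brauer character with its corresponding defect-zero weight, and by matching the three principal-block Brauer characters with the three weights $(V_4,\delta)$, $\delta\in\Irr(C_3)$; central-character compatibility is automatic since $\ZZ(S)=1$.

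The hard part will be establishing $\hH\times\Aut(L)$-equivariance of the matching on the principal block and verifying condition~(2) of Proposition~\ref{def:ind-GAW-cond-2}. For equivariance I plan to exploit compatibility of the Morita equivalence with the actions of $\textup{PGL}_2(q)$ -- which realizes $\N_{\textup{PGL}_2(q)}(V_4)\cong S_4$ and induces the inversion automorphism on $\N_L(V_4)/V_4\cong C_3$ -- and of $\hH$ acting via the corresponding inversion on $\Irr(C_3)$. For the triple condition I intend to apply Remark~\ref{rmk:add-condition}(2): I will check that the three principal-block Brauer characters and their weight counterparts admit common $(\hH\times\N_{\widetilde A}(V_4))_\varphi$-invariant extensions to the inertia groups inside the natural $\textup{P}\Gamma\textup{L}_2(q)$-construction, and that the associated cocycles agree. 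By the semidirect product structure of $\N_{\textup{P}\Gamma\textup{L}_2(q)}(V_4)$ and the Morita equivalence, this reduces to the analogous extendibility statement for $A_4$ and for $A_4/V_4\cong C_3$, which is straightforward.
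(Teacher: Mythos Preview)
Your reduction to the defining-characteristic case and to the cyclic-Sylow case is correct and matches the paper's strategy exactly, including the observation that $f$ is odd when $q\equiv\pm3\pmod 8$ (so $\textup{Out}(L)$ is cyclic) and the handling of $q=4$ via $\textup{PSL}_2(5)$. For the residual case $p=2$, $q$ odd, your choice to work with the universal $2'$-cover $S=L=\textup{PSL}_2(q)$ (Sylow $V_4$, $\N_L(V_4)\cong A_4$) rather than the paper's choice $S=\textup{SL}_2(q)$ (Sylow $Q_8$) is legitimate and in some respects simpler: since $\ZZ(L)=1$, condition~(2.b) of Proposition~\ref{def:ind-GAW-cond-2} becomes vacuous, and indeed the only positive-defect block is the principal one, so your partition into defect-zero characters plus three principal-block characters is correct. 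Your proposed $\hH\times\Aut(L)$-equivariant bijection on the principal block is also essentially what the paper establishes (diagonal automorphism and $\sigma$ each swap the two non-trivial characters on both sides), though the paper verifies this by direct computation rather than via a Morita equivalence.

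The genuine gap is in your treatment of condition~(2.c). Your claim that ``by the Morita equivalence this reduces to the analogous extendibility statement for $A_4$'' does not hold: a Morita equivalence between the principal blocks of $L$ and $A_4$ says nothing about extensions of Brauer characters to the overgroup $A=L\rtimes\langle F_0\rangle_\eta$, because $A_4$ has no analogue of the order-$f$ field-automorphism extension. What must actually be shown is that $\eta_1$ (and $\varphi_1$) admit extensions to $A$ (resp.\ $\N_A(V_4)$) that are invariant under $(\hH\times\N_{\widetilde A}(V_4))_{\varphi_1}$, a group containing mixed elements such as $\sigma$ composed with the diagonal automorphism. The paper carries this out by transferring to ordinary characters via a unitriangular basic set (Lemma~\ref{lem:unitri}) and then invoking explicit Harish-Chandra-theoretic extension results (\cite[Proposition~8.7]{Jo22}, \cite[Remark~4.1(b)]{Jo24}) to produce the required $(\Aut(S)\times\hH)$-invariant extensions; this is where the real work lies, and it is not bypassed by the Morita equivalence with $A_4$. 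Your plan would succeed if you replaced the Morita-equivalence reduction with a direct construction of such invariant extensions for the three principal-block characters and their weight counterparts.
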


\begin{proof}
  Let $L=\textup{PSL}_2(q)$ with $2\which  q$ or $q\equiv \pm 3\ \textup{(mod 8)}$.  Let $q_0$ denote the prime such that $q=q_0^f$ with $f\ge 1$. Then the outer automorphism group of $L$ is cyclic, that is, $\Aut(L)\cong C_{2f}$. Note that $q>3$ since $L$ is simple.
According to Theorem \ref{thm:def-char}, we may suppose that $p\nmid q$. 

From now on we assume that $2\nmid q$ and $p=2$, as otherwise the Sylow $p$-subgroups of $L$ are cyclic and the assertion follows by  Corollary~\ref{prop:cyclic-sylow} and Corollary~\ref{rmk:lie}.
Let $S=\textup{SL}_2(q)$. Then $S$ is the universal covering group of $L$ since $q$ is odd and $q\equiv \pm 3\ \textup{(mod 8)}$.
Let $F_0$ denotes the field automorphism of $\textup{GL}_2(\overline{\mathbb F}_q)$ induced by the map $x\mapsto x^{q_0}$ in $\overline{\mathbb F}_q$.
In \cite{FLZ21,FLZ23} or \cite[Proposition~8.5]{Nav1}, an $\Aut(S)$-equivariant bijection $\Omega:\IBr(S)\to\cW(S)$, which preserves blocks, is established. We will prove this bijection $\Omega$ satisfies the conditions of Proposition~\ref{def:ind-GAW-cond-2} in the following.

(I) First we show that $\Omega$ is $\hH$-equivariant which implies that the condition (1) of Proposition~\ref{def:ind-GAW-cond-2} holds. The irreducible Brauer characters and weights are described in the proof of \cite[Proposition~8.5]{Nav1}. 
By the construction there, for any non-principal 2-block $B$ of $S$, one has that $|\IBr(B)|=|\cW(B)|=1$, and thus the actions of $\hH$ on $\IBr(S)\setminus\IBr(B_0)$ and $\cW(S)\setminus\cW(B_0)$ are induced by the action of $\hH$ on blocks.
Here $B_0$ denotes the principal 2-block of $S$.
Thus to prove that $\Omega$ is $\hH$-equivariant, it suffices to show that $\Omega|_{\IBr(B_0)}:\IBr(B_0)\to\cW(B_0)$ is $\hH$-equivariant.

As in the Introduction, we write $\hH=\langle\sigma_2\rangle$. Abbreviate $\sigma_2$ to $\sigma$ in the following.
The principal 2-block $B_0$ consists of three irreducible Brauer characters: the principal Brauer character $\eta_0$, and two Brauer characters $\eta_1$, $\eta_2$ of degree $\frac{q-1}{2}$ which are $\sigma$-conjugate. In fact, this can be checked directly by the character table and decomposition matrix of $S$ determined in \cite{Bu76}. 
Let $P$ denote a Sylow 2-subgroup of $S$, which is isomorphic to the quaternion group of order 8.
Under the notation of \cite{FLZ21}, $P=R_{1,0,1}$, and then $\N_S(P)/P\cong C_3$. From this, $\sigma$ conjugates the two non-principal irreducible characters of $\N_S(P)/P$.
Therefore, the bijection $\Omega|_{\IBr(B_0)}:\IBr(B_0)\to\cW(B_0)$ is $\hH$-equivariant.

(II) We prove that every character $\chi\in\Irr(S)$ has an $(\Aut(G)\times\hH)_\chi$-invariant extension to $S\rtimes\langle F_0\rangle_\chi$.
We can divide the irreducible characters of $S$ into three classes: (i) two unipotent characters, (ii) irreducible characters of degree $q+1$ or $\frac{q+1}{2}$, (iii) irreducible characters of degree $q-1$ or $\frac{q-1}{2}$.

Let $\chi\in\Irr(S)$ be a character in class (i) or (ii). Then $\chi$ lies in a principal series of $S=\textup{SL}_2(q)$. 
Suppose that $(T,\delta)$ is a cuspidal pair such that $\chi$ lies in the Harish-Chandra series $\cE(S,(T,\delta))$.
Then $T\cong C_{q-1}$, $|\N_S(T):T|=2$. Also, $T$ is $\langle F_0\rangle$-stable and $\N_{S\langle F_0\rangle}(T)/\C_{S\langle F_0\rangle}(T)\cong C_{2f}$ which implies that $\delta$ extends to $(\N_{S\langle F_0\rangle}(T))_\delta$.
 Moreover, $\delta$ is $\langle F_0 \rangle_\chi$-invariant.
The linear character $\delta$ has an extension $\hat\delta$ to $T\rtimes\langle F_0\rangle_\delta$ such that $\hat\delta(F_0^k)=1$ if $F_0^k\in\langle F_0\rangle_\delta$.
If $a\iota\in(\Aut(G)\times\hH)_\chi$ and $\delta^{a\iota x}=\delta$ for some $x\in\N_S(T)$, then $\hat\delta^{a\iota x}=\hat\delta$.
Thus by \cite[Proposition~8.7]{Jo22}, a generalization of \cite[Proposition~2.6 and Corollary~2.7]{RS22}, the character $\chi$ has a $(\Aut(G)\times\hH)_\chi$-invariant extension to $S\rtimes\langle F_0\rangle_\chi$.

For the characters in the class (iii), we regard $S$ as $\textup{SU}_2(q)$, which is isomorphic to $\textup{SL}_2(q)$, then every character of $S$ of degree $q-1$ or $\frac{q-1}{2}$ lie in some principal series of $S=\textup{SU}_2(q)$. 
Thus analogously as above we can show that every character $\chi\in\Irr(S)$ lying in the class (iii) has a $(\Aut(G)\times\hH)_\chi$-invariant extension to $S\rtimes\langle F_0\rangle_\chi$.

(III) Let $G=\textup{GL}_2(q)$. By \cite[Theorem~2.5.1]{GLS98}, the group $G\rtimes\langle F_0 \rangle$ induces all automorphisms of $S$, i.e., $\Aut(S)\cong G/\ZZ(G)\rtimes\langle F_0 \rangle$. By \cite[Thmeorem~4.1]{CS17}, one has $(G\rtimes\langle F_0 \rangle)_\chi=G_\chi\rtimes\langle F_0 \rangle_\chi$ for every $\chi\in\Irr(S)$.

Now we prove that every Brauer character $\eta\in\IBr(S/\ZZ(S))$ has a $(G\langle F_0\rangle\times\hH)_\eta$-invariant extension to $(G/\ZZ(G)\rtimes\langle F_0\rangle)_\eta$.
As $S$ has an $\Aut(S)$-stable unitriangular basic set (cf. \cite{Bu76} or \cite{De17}), we may transfer to ordinary characters.

Using the results in \cite[\S4]{De17}, we can check directly that $\cE(S,2')\setminus \Irr(B_0)$ form an $(\Aut(S)\rtimes\hH)$-stable unitriangular basic set of the union of non-principal blocks of $S$.
Here $\cE(S,2')$ is defined as in \cite[p.199]{CE04}.
Let $\chi\in \cE(S,2')\setminus \Irr(B_0)$. Then $\ZZ(S)\subseteq\ker(\chi)$. 
Then as a character of $S/\ZZ(S)$, the character $\chi$ has an extension $\tilde\chi$ to $G_\chi/\ZZ(G)\rtimes D_\chi$ since $\textup{Out}(L)$ is cyclic.
If $\chi$ is not $G$-invariant, then this follows from (II) immediately.
So we assume that $\chi$ is $G$-invariant.
Let $\tau$ be the restriction of $\tilde\chi$ to $G/\ZZ(G)$ and let $\tau'$ be another extension of $\chi$ to $G/\ZZ(G)$.
Also regard $\tau$, $\tau'$ as characters of $G$, then without loss of generality there exists a semisimple $2'$-element $s\in G$ such that $\tau$ lies in the Lusztig series $\cE(G,s)$ and $\tau'$ lies in $\cE(G,-s)$.
By a theorem of Srinivasan--Vinroot \cite{SV20}, $\tau^{\sigma^k}\in\cE(G,s^{2^k})$ for $k\ge 1$.
Since $s^{2^k}$ is of $2'$-order while $-s$ is not of $2'$-order, $\tau^{\sigma^k}\ne\tau'$ for $k\ge 1$.
This implies that $\hH_\chi=\hH_\tau=\hH_{\tau'}$, and thus $((G\rtimes\langle F_0\rangle)\times\hH)_\chi=((G\rtimes\langle F_0\rangle)\times\hH)_\tau$. 
By \cite[Remark~4.1(b)]{Jo24}, $\tau$ has a $(\langle F_0\rangle\times\hH)_\tau$-invariant extension to $G\rtimes\langle F_0\rangle_\tau$, and this gives a $((G\rtimes\langle F_0\rangle)\times\hH)_\chi$-invariant extension of $\chi$ to $(G\rtimes\langle F_0\rangle)_\chi$.
Therefore, by Lemma~\ref{lem:unitri}, every Brauer character $\eta\in\IBr(S/\ZZ(S))$ not lying in the principal block has a $(G\langle F_0\rangle\times\hH)_\eta$-invariant extension to $(G/\ZZ(G)\rtimes\langle F_0\rangle)_\eta$.

Now we consider the principal block $B_0$ of $S$. Then $B_0$ has a unitriangular basic set $\{ 1_S,\gamma_1,\gamma_2 \}$ such that $\gamma_1$ and $\gamma_2$ are both $\langle F_0\rangle$-invariant, $\gamma_1$ is $G$-conjugate to $\gamma_2$ and $\gamma_1$ and $\gamma_2$ are fused by $\sigma$.
By (II), $\gamma_i$ has a $(G\langle F_0\rangle\times\hH)_{\gamma_i}$-invariant extension to $S\rtimes\langle F_0\rangle_{\gamma_i}$ for $i=1,2$.
Thus $\eta_i$ has a $(G\langle F_0\rangle\times\hH)_{\eta_i}$-invariant extension to $S\rtimes\langle F_0\rangle_{\eta_i}$ for $i=1,2$.
As a Brauer character of $S/\ZZ(S)$, $\eta_i$ has a $(G\langle F_0\rangle\times\hH)_{\eta_i}$-invariant extension to $S/\ZZ(S)\rtimes\langle F_0\rangle_{\eta_i}=(G\rtimes\langle F_0\rangle)_{\eta_i}/\ZZ(G)$ for $i=1,2$.

(IV) Next, we show that for every weight $(Q,\varphi)$ of $S$, the Brauer character $\varphi^{\circ}\in\IBr(\N_S(Q)/Q)$ has an $(\N_{G\langle F_0\rangle}(Q)\times\hH)_{\varphi}$-invariant extension to $\N_{G\langle F_0\rangle}(Q)_{\varphi}/Q\ZZ(G)$.

By the classification of weights of $S$ (cf. \cite[\S5]{FLZ21} or the proof of \cite[Proposition~8.5]{Nav1}), $Q$ is $S$-conjugate to one of the three groups: $\ZZ(S)$, $R$, $P$. Here $R$ denotes a defect group of any non-principal block of $S$ which is of positive defect so that $R\cong C_4$.
If $Q=\ZZ(S)$, then this assertion follows by (III). 

Let $Q=R$. First assume that $4\which  (q-1)$. Then by \cite[\S4]{FLZ21}, $\N_S(R)$ is just the normalizer of the maximal split torus $T\cong C_{q-1}$ in $S=\textup{SL}_2(q)$ and $R$ is the Sylow 2-subgroup of $T$.
Let $(R,\varphi)$ be a weight of $S$.
Then $\varphi$ is $\N_G(R)$-invariant. Moreover, $\varphi$ is the induction of some character $\theta\in\Irr(T)$ to $\N_S(R)$. In addition, it can be checked directly $\langle F_0\rangle_{\varphi}=\langle F_0\rangle_{\theta}$.
Note that every character $\theta$ of $T$ has an $(\langle F_0\rangle\rtimes\hH)_\theta$-extension $\hat\theta$ to $T\rtimes \langle F_0\rangle_\theta$ by taking $\hat\theta(F_0^k)=1$ for any $F_0^k\in\langle F_0\rangle_\theta$.
By \cite[Corollary.~4.3]{Is84}, the induction $\hat\varphi$ of $\hat\theta$ to $\N_S(R)\rtimes \langle F_0\rangle_\varphi$ is an extension of $\varphi$.
One can show that $\hat\varphi$ is $(\langle F_0\rangle\rtimes\hH)_\varphi$-invariant.
Suppose that $b$ is a block $S$ such that $(R,\varphi)$ is a $b$-weight, and $B$ is a block of $G$ covering $b$.
By \cite[\S5]{FLZ21}, if $(R',\varphi')$ is a $B$-weight of $G$ covering $(R,\varphi)$, then $R'\cap S=R$, $\N_G(R)=\N_G(R')$ and $\varphi'\in\Irr(\N_G(R)\which \varphi)$. In fact, $\varphi'$ is an extension of $\varphi$.
Since we may regard $(R,\varphi)$ as a weight of $S/\ZZ(S)$, as in the arguments of (III), we may assume that  $(R',\varphi')$ is a weight of $G/\ZZ(G)$, and there is a semisimple $2'$-element $s$ such that $B\subseteq\cE_2(G,s)$ or $B\subseteq\cE_2(G,-s)$, where $\cE_2(G,s)$ is defined as in \cite[p.131]{CE04}.
In particular, $(\N_{G\langle F_0\rangle}(R)\times\hH)_\varphi=(\N_{G\langle F_0\rangle}(R)\times\hH)_{\varphi'}$.
So by \cite[Remark~4.1(b)]{Jo24}, $\varphi'$ has an $(\N_{G\langle F_0\rangle}(R)\times\hH)_{\varphi'}$-invariant extension to $\N_{G}(R)\rtimes\langle F_0\rangle_{\varphi'}$, which implies that the Brauer character $\varphi^{\circ}$ has an $(\N_{G\langle F_0\rangle}(R)\times\hH)_{\varphi}$-invariant extension to $\N_{G\langle F_0\rangle}(R)_{\varphi}$.
Now we assume that $4\which  (q+1)$, in which situation we regard $S$ as $\textup{SU}_2(q)$ and the proof is analogous as above.

Now we consider the Sylow 2-subgroup $P$. 
Then $F_0$ centralizes $P$, $\N_{S}(P)/P\cong C_3$ and $\N_{S\langle F_0\rangle}(P)/P=(\N_{S}(P)/P)\rtimes\langle F_0\rangle$.  The principal block is the unique block of $S$ with maximal defect. 
If $\varphi\in\Irr(\N_{S}(P)/P)$, then $\varphi$ has an extension $\hat\varphi$ to $(\N_{S}(P)/P)\rtimes\langle F_0\rangle$ with $\hat\varphi(F_0)=1$. Then the assertion holds.

(V) Now we define $A$ and $\widetilde A$ for any $\eta\in\IBr(S)$ and $\overline{(Q,\varphi)}=\Omega(\eta)$, and establish the condition (2) of Proposition~\ref{def:ind-GAW-cond-2}.  
Let $A=(G\rtimes\langle F_0\rangle)_\eta/\ZZ(G)$ and $\widetilde{A}=(G\rtimes\langle F_0\rangle)_{\eta^{\hH}}/\ZZ(G)$. Then the condition (2.a) of Proposition~\ref{def:ind-GAW-cond-2} holds.
As $S$ has cyclic outer automorphism group, the extendibility of $\eta$ and $\varphi$ is guaranteed, and the condition (2.b) of Proposition~\ref{def:ind-GAW-cond-2} is satisfied.
Finally, by (III), (IV) and Remark~\ref{rmk:add-condition} (2), the condition (2.c) of Proposition~\ref{def:ind-GAW-cond-2} holds, and this completes the proof.
\end{proof}

\begin{prop}\label{prop:ree}
The inductive condition holds for  the Ree simple groups $^2\textup{G}_2(3^{2n+1})$ (with $n\ge 1$) at every prime $p$.
  \end{prop}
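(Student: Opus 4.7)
The plan is to split into cases according to the prime $p$, analogously to the treatment of $\textup{PSL}_2(q)$ in Proposition \ref{prop:sl2}. Let $S={}^2\textup{G}_2(q)$ with $q=3^{2n+1}$ and $n\geqslant 1$. Recall that for $n\geqslant 1$ the Schur multiplier of $S$ is trivial and $\Aut(S)/\Inn(S)$ is cyclic of order $2n+1$, generated by a field automorphism $F_0$, so the universal $p'$-covering group of $S$ coincides with $S$ itself for every $p$, and $S\rtimes\langle F_0\rangle$ satisfies the hypotheses ($G/S$ cyclic, $\C_G(S)=\ZZ(G)=1$) required by Corollary \ref{prop:cyclic-sylow}.

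First, when $p=3$, the result follows immediately from Theorem \ref{thm:def-char}. Next, for any prime $p\notin\{2,3\}$ dividing $|S|=q^3(q-1)(q+1)(q^2-q+1)$, I would check that a Sylow $p$-subgroup of $S$ is cyclic; this follows because the prime factors of $q\pm 1$ and $q^2-q+1$ that are distinct from $2$ contribute cyclic factors from the corresponding tori of $S$ (the structure of the maximal tori of $^2G_2(q)$ is well known, see e.g.\ Ward's or Deriziotis--Liebeck's work). Granting this, Corollary \ref{prop:cyclic-sylow} applied with $G=S\rtimes\langle F_0\rangle$ disposes of every prime $p\neq 2,3$.

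The remaining and main case is $p=2$, where a Sylow $2$-subgroup $P$ of $S$ is elementary abelian of order $8$, with $\N_S(P)/P\cong \textup{Frob}_{21}=C_7\rtimes C_3$ (the same structure as in $J_1$, see Proposition \ref{lem:Janko}). I would verify the conditions of Proposition \ref{def:ind-GAW-cond-2} by first constructing an $\Aut(S)$-equivariant bijection $\Omega:\IBr(S)\to\cW(S)$ that preserves blocks: for non-principal $2$-blocks $B$ of $S$ one has $|\IBr(B)|=|\cW(B)|=1$, so the $\hH$- and $\Aut(S)$-actions on these characters are induced from the action on blocks and any bijection preserving blocks is automatically equivariant. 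For the principal block $B_0$, using the explicit decomposition matrix of ${}^2G_2(q)$ (Landrock--Michler, Hiss) and the character table of $\N_S(P)=P\rtimes(C_7\rtimes C_3)$, one pairs off the Brauer characters in $B_0$ with the weights $(P,\varphi)$, $\varphi\in\dz(\N_S(P)/P)$, together with the lower-defect weights supported on the non-cyclic radical subgroups of $P$ (these are easy to enumerate since $P$ is elementary abelian of rank $3$).

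To promote this bijection to an $(\hH\times\Aut(S))$-equivariant one and to verify the character-triple condition (2) of Proposition \ref{def:ind-GAW-cond-2}, I would argue as in the proof of Proposition \ref{prop:sl2}: because $\Aut(S)/\Inn(S)=\langle F_0\rangle$ is cyclic of odd order $2n+1$, every $\eta\in\IBr(S)$ extends to its inertia subgroup in $S\rtimes\langle F_0\rangle$, and similarly for every $\varphi^\circ$ with $(Q,\varphi)\in\cW(S)$ the character $\varphi^\circ$ extends to $(\N_{S\langle F_0\rangle}(Q))_\varphi$ (for $Q=P$ this uses that $F_0$ centralizes $P$, and for cyclic defect $Q$ this is handled exactly as in step (IV) of the proof of Proposition \ref{prop:sl2}). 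Galois invariance is then checked using the explicit values of the basic set of ordinary characters of $^2G_2(q)$ via Lemma \ref{lem:unitri}, together with the Srinivasan--Vinroot description of the Galois action on Lusztig series, applied to the ambient algebraic group setup for $^2G_2$. Finally, Remark \ref{rmk:add-condition}(2) (each of $\tilde\eta,\tilde\varphi$ is invariant) gives condition (2.c). The main obstacle will be the combinatorial bookkeeping in the principal $2$-block $B_0$: matching Brauer characters to weights over the non-cyclic radical subgroups of $P$ in a way that is simultaneously $\hH$- and $\langle F_0\rangle$-equivariant, since here neither the defect group nor the character parameters are rigid enough to force equivariance automatically as in the cyclic-defect or defect-zero situations.
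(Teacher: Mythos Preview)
Your overall strategy coincides with the paper's: reduce $p=3$ to Theorem~\ref{thm:def-char}, reduce odd $p\neq 3$ to Corollary~\ref{prop:cyclic-sylow} via cyclic Sylow, and for $p=2$ verify Proposition~\ref{def:ind-GAW-cond-2} by checking $\hH$-equivariance of the known $\Aut(S)$-equivariant bijection $\Omega$ block by block, then obtaining condition~(2.c) from Remark~\ref{rmk:add-condition}(2) via invariant extensions and Lemma~\ref{lem:unitri}.

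A few details in your $p=2$ sketch need correcting. First, there are no ``lower-defect'' $B_0$-weights: since $\N_S(P)/P\cong C_7\rtimes C_3$ already supplies five defect-zero characters and $|\IBr(B_0)|=5$, every $B_0$-weight sits at $Q=P$; your anticipated combinatorial obstacle there does not arise. Second, on the weight side the radical $2$-subgroups of $S$ are $1$, $C_2$, $(C_2)^2$, $(C_2)^3$, and the invariant-extension statement must be checked for each of these separately (the $(C_2)^2$ case, with $\N_S(Q)/Q\cong (C_{(q^2+1)/4}\rtimes C_2)\rtimes C_3$, is not covered by your ``cyclic defect'' remark, and the $C_2$ case uses $\N_S(Q)/Q\cong\textup{PSL}_2(q^2)$ and hence Proposition~\ref{prop:sl2}). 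Third, the paper does not invoke Srinivasan--Vinroot here; knowing the Galois action on Lusztig series does not by itself produce $(\langle F_0\rangle\times\hH)_\chi$-invariant extensions to $S\rtimes\langle F_0\rangle_\chi$. Instead the paper appeals to Johansson's results for Suzuki/Ree groups and for groups with non-generic Sylow normalizers, which furnish these invariant extensions directly for the relevant unipotent, regular, real, and semisimple characters in the basic set, and for the weight characters at $Q=(C_2)^3$.
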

  
  \begin{proof}
   Let $L={}^2\textup{G}_2(q^2)$ (with $n\ge 1$) where $q^2=3^{2n+1}$. Then the outer automorphism group of $L$ is cyclic.
   According to Theorem \ref{thm:def-char}, we may assume that $p\ne3$. Thanks to Corollary~\ref{prop:cyclic-sylow}, it remains to consider the case $p=2$, as the Sylow $p$-subgroups of $L$ are cyclic when $p$ is odd. 
As $L$ has trivial Schur multiple, we may assume $S=L$, which is the universal covering group of $L$.

From now on we assume that $p=2$.
In \cite[Proposition~8.7]{Nav1}, an $\Aut(S)$-equivariant bijection $\Omega:\IBr(S)\to\cW(S)$, which preserves blocks, is established. We will prove this bijection $\Omega$ satisfies the conditions of Proposition~\ref{def:ind-GAW-cond-2} in the following.

(I) We show that $\Omega$ is $\hH$-equivariant, which implies that the condition (1) of Proposition~\ref{def:ind-GAW-cond-2} holds.
Note that $|\IBr(B)|=|\cW(B)|=1$ if $B$ is a non-principal block of $S$ (see the proof of \cite[Proposition~8.4]{Nav1}).
Similar as in the proof of Proposition~\ref{prop:sl2}, we only need to show that $\Omega|_{\IBr(B_0)}:\IBr(B_0)\to\cW(B_0)$ is $\hH$-equivariant for the principal block $B_0$ of $S$.
As in the proof of Proposition~\ref{prop:sl2}, we write $\hH=\langle\sigma_2\rangle$ and abbreviate $\sigma_2$ to $\sigma$.

By the main result of \cite{LM80}, $1_S$, $\zeta_2$, $\zeta_3$, $\zeta_6$, $\zeta_8$ (in the notation if \cite{Wa66}) form a unitriangular basic set of $B_0$. 
Using the charter table of $S$ in the Chapter V of \cite{Wa66}, we can check that $1$, $\zeta_2$, $\zeta_3$ are $\hH$-invariant and $\zeta_6$ and $\zeta_8$ are fused by $\sigma$.
On the other hand, $\N_S(P)/P\cong C_7\rtimes C_3$, where $P\cong (C_2)^3$ is a Sylow 2-subgroup of $S$.
The action of $\hH$ on $\Irr(\N_S(P)/P)$ has been determined in Proposition~\ref{lem:Janko}; that is, three elements are $\hH$-invariant, while the other two are fused by $\sigma$.
So we can check directly that $\Omega|_{\IBr(B_0)}:\IBr(B_0)\to\cW(B_0)$ is $\hH$-equivariant.

(II) Let $\gamma$ be defined as in \cite[p.3]{Jo24}. Then by \cite[Theorem~2.5.1]{GLS98}, the group $S\rtimes\langle\gamma\rangle$ induces all 
automorphisms on $S$.

Let $\eta\in\IBr(S)$ and $\overline{(Q,\varphi)}=\Omega(\eta)$.
We define $A=S\rtimes\langle\gamma\rangle_\eta$ and $\widetilde A=S\rtimes\langle\gamma\rangle_{\eta^{\hH}}$.
Then the condition (2.a) of Proposition~\ref{def:ind-GAW-cond-2} holds.
As the outer automorphism group of $S$ is cyclic, the extendibility of $\eta$ and $\varphi$ is guaranteed, and the condition (2.b) of Proposition~\ref{def:ind-GAW-cond-2} is satisfied.
If $\eta$ lies in a block of $S$ of defect zero, then (2.c) of Proposition~\ref{def:ind-GAW-cond-2} holds automatically.
So it remains to verify the condition (2.c) of Proposition~\ref{def:ind-GAW-cond-2} for the case that $\eta$ lies in a block of $S$ of positive defect, which will be established as soon as we show Remark~\ref{rmk:add-condition} (2).

(III) Now we prove that every Brauer character $\eta\in\IBr(S)$ lying in a block of positive defect has a $(\langle\gamma\rangle\times\hH)_\eta$-invariant extension to $S\rtimes\langle\gamma\rangle_\eta$.
Using \cite{LM80}, \cite{Wa66} and the proof of \cite[Proposition~8.4]{Nav1}, we see that the irreducible characters $1_S$, $\zeta_2$, $\zeta_3$, $\zeta_6$, $\zeta_8$, $\eta_r$, $\eta_t$ (in the notation if \cite{Wa66}), where $1\ne r\in C_{(q^2-1)/2}<\mathbb C^\times$, $\pm1\ne t\in C_{(q^2+1)/2}<\mathbb C^\times$, form a unitriangular basic set of the union of blocks of $S$  of positive defect. 
Thanks to Lemma~\ref{lem:unitri}, we transfer to these ordinary characters.
By \cite{CHEVIE}, we can check that each of these characters is either unipotent, regular, real, or semisimple, and thus the assertion follows \cite[Proposition~3.6]{Jo22b} and \cite[Lemma~4.2 and Proposition 4.3, 4.4]{Jo24}.

(IV) We shall have established this proposition if we prove the following: for every weight $(Q,\varphi)$ of $S$ lying in a block of positive block, the character $\varphi\in\dz(\N_S(Q)/Q)$ has an $(\N_{S\langle\gamma\rangle}(Q)\times\hH)_{\varphi}$-invariant extension to $\N_{S\langle\gamma\rangle}(Q)_{\varphi}/Q$.

The group $S$ has four conjugacy classes of radical 2-subgroups: 1, $C_2$, $(C_2)^2$, $(C_2)^3$ (cf. \cite{Kl86}, \cite{Wa66}, see also the proof of \cite[Proposition~8.4]{Nav1}).
If $Q\cong C_2$, then $\N_S(Q)/Q\cong\textup{PSL}_2(q^2)$ (cf. \cite{Wa66}) and the assertion holds by the proof of Proposition~\ref{prop:sl2}. For the case that $Q\cong (C_2)^3$ is a Sylow 2-subgroup of $S$, this assertion has been established in the proof of \cite[Proposition~5.1]{Jo24}. 
Finally, we let $Q\cong(C_2)^2$ so that $\N_S(Q)/Q\cong(C_{(q^2+1)/4}\rtimes C_2)\rtimes C_3$ (cf. \cite[p.181]{Kl86}). 
Then the proof is similar as the arguments in (IV) of the proof of Proposition~\ref{prop:sl2}. Thus we complete the proof.
  \end{proof}

\begin{coro}\label{cor-abel-Sy}
The inductive NAW condition holds for  every non-abelian simple group with abelian Sylow 2-subgroups at every prime $p$.
\end{coro}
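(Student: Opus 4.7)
The plan is to reduce the corollary to a case check against results already established in the paper by invoking the classification of non-abelian finite simple groups with abelian Sylow 2-subgroups, due to Walter (building on work of Gorenstein and others). This classification asserts that every such simple group $L$ is isomorphic to one of
\begin{itemize}
\item $\textup{PSL}_2(q)$ with $q = 2^n$, $n \geq 2$,
\item $\textup{PSL}_2(q)$ with $q$ odd, $q \equiv \pm 3 \pmod 8$, and $q > 3$,
\item the Ree simple group ${}^2G_2(3^{2n+1})$ with $n \geq 1$,
\item the first Janko group $J_1$.
\end{itemize}

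Given this classification, the proof reduces to matching each case with the corresponding verification already carried out. The first two families are handled simultaneously by Proposition~\ref{prop:sl2}, whose hypothesis ``$2 \mid q$ or $q \equiv \pm 3 \pmod 8$'' is exactly the union of the two PSL$_2$ cases above, and whose conclusion holds at every prime $p$. The Ree groups are handled by Proposition~\ref{prop:ree} at every prime, and $J_1$ by Proposition~\ref{lem:Janko} at every prime. Since these three propositions between them exhaust the classification and cover all primes dividing the order of each simple group, the inductive GAW condition holds for every non-abelian simple group with abelian Sylow 2-subgroups at every prime $p$.

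The only non-trivial task at this final step is confirming the match-up, which is immediate from inspection of the statements. There is no fundamentally new obstacle here: the substantive work—constructing explicit $\hH \times \Aut(S)$-equivariant bijections between $\IBr(S)$ and $\cW(S)$, and exhibiting projective representations that witness the ordering $\geqslant_c$ of the corresponding $\hH$-triples—has already been carried out in the proofs of Propositions~\ref{prop:sl2}, \ref{prop:ree}, and \ref{lem:Janko}, drawing in turn on Theorem~\ref{thm:def-char}, Corollary~\ref{prop:cyclic-sylow}, and Remark~\ref{rmk:lie} for those subcases in which the Sylow $p$-subgroup is cyclic, together with the module-theoretic arguments of Section~\ref{sec_abelian2} for the cases at $p=2$.
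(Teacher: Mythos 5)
Your proposal is correct and is essentially the paper's own proof: both invoke Walter's classification of non-abelian simple groups with abelian Sylow 2-subgroups ($J_1$, the Ree groups ${}^2\textup{G}_2(3^{2n+1})$ with $n\ge 1$, and $\textup{PSL}_2(q)$ with $2\mid q$ or $q\equiv\pm 3\pmod 8$) and then cite Propositions~\ref{lem:Janko}, \ref{prop:sl2} and \ref{prop:ree} for the respective cases. No gap; the match-up of cases to propositions is exactly as the paper does it.
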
  

\begin{proof}
Let $L$ be a non-abelian simple group with abelian Sylow 2-subgroups.
  According to Walter's classification \cite{Wa69}, the group $L$ has to be the first Janko group $J_1$, a Ree group ${}^2\textup{G}_2(3^{2n+1})$ (with $n\ge 1$), or a special linear group $\textup{PSL}_2(q)$ with $2\which  q$ or $q\equiv \pm 3\ \textup{(mod 8)}$. 
  Thus this assertion follows by Propositions~\ref{lem:Janko}, \ref{prop:sl2}, \ref{prop:ree}.
\end{proof}

Finally, we are able to prove the NAW weight conjecture for finite groups with abelian Sylow 2-subgroups.

\begin{proof}[Proof of Theorem~D]
  Since $G$ possesses abelian Sylow 2-subgroups, the same property holds for every non-abelian simple group involved in $G$.
  Thus by Theorem~\ref{thm-reduction}, to prove that the NAW conjecture holds for $G$, it suffices to show that the inductive NAW condition holds for every non-abelian simple group with abelian Sylow 2-subgroups, which follows from Corollary~\ref{cor-abel-Sy}.
\end{proof}

\bigskip
{\noindent\bf\large  Acknowledgement.}
\medskip

The author is grateful to the anonymous referee for carefully reading the manuscript and providing valuable suggestions that significantly enhanced the exposition.




\bibliographystyle{plain}

\begin{thebibliography}{99}
\label{Reference}
\bibitem{Alp-local-book}
J.\,L.~Alperin,  \emph{Local Representation Theory}. Cambridge Stud. Adv. Math., vol. {\bf 11},  Cambridge University Press, Cambridge, 1986.


\bibitem{Alp-conj}
J.\,L.~Alperin, Weights for finite groups. In: \emph{The Arcata Conference on Representations of Finite Groups, Arcata, Calif. (1986), Part I}. Proc. Sympos. Pure Math., vol. \textbf{47}, pp. 369--379, Am. Math. Soc., Providence, 1987.


\bibitem{AD12}
J.~An, H.~Dietrich, The AWC-goodness and essential rank of sporadic simple groups. \emph{J. Algebra \bf 356} (2012), 325--354.

\bibitem{BT71}
A. Borel, J.~Tits, \'El\'ements unipotents et sous-groupes paraboliques de groupes r\'eductifs.~I. \emph{Invent. Math. \bf 12} (1971), 95--104.

\bibitem{BS22}
J.~Brough, B.~Sp\"ath, A criterion for the inductive Alperin weight condition. {\itshape Bull. Lond. Math. Soc.} \textbf{54} (2022), 466--481.



\bibitem{Bu76}
R. Burkhardt, Die Zerlegungsmatrizen der Gruppen $\textup{PSL}(2,p^f)$. \emph{J. Algebra \bf 40} (1976), 75--96.


\bibitem{CE04}
M. Cabanes, M. Enguehard, \emph{Representation Theory of Finite Reductive Groups}. New Math. Monogr., {vol. \bf 1}, Cambridge University Press, Cambridge, 2004.



\bibitem{CS17}
M. Cabanes, B. Sp\"ath, Equivariant character correspondences and inductive McKay condition for type~$\mathsf A$. \emph{J. reine angew. Math. \bf 728} (2017), 153--194.



\bibitem{CS23}
{M. Cabanes, B.~Sp\"ath}, The McKay Conjecture on character degrees.  
To appear in \emph{Ann. of Math.}


\bibitem{CCNPW85}
J.\,H.~Conway, R.\,T.~Curtis, S.\,P.~Norton, R.\,A.~Parker, R.\,A.~Wilson, \emph{Atlas of Finite Groups: Maximal Subgroups and Ordinary Characters for Simple Groups}. Oxford University Press, Eynsham, 1985.



\bibitem{Cu70}
C.\,W.~Curtis, Modular representations of finite groups with split $(B,N)$-pairs. In: \emph{Seminar on Algebraic Groups and Related Finite Groups (The Institute for Advanced Study, Princeton, N.J., 1968/69)}, pp. 57--95, Lecture Notes in Math., vol. {\bf 131}, Springer, Berlin--New York, 1970.



\bibitem{Da96}
E.\,C.~Dade, Counting characters in blocks with cyclic defect groups. I. \emph{J. Algebra \bf 186} (1996), 934--969. 



\bibitem{De17}
D. Denoncin, Stable basic sets for finite special linear and unitary groups. \emph{Adv. Math. \bf 307} (2017), 344--368.



\bibitem{Fe93}
W. Feit, Extending Steinberg characters. In: \emph{Linear Algebraic Groups and Their Representations (Los Angeles, CA, 1992)}, pp. 1--9, Contemp. Math., vol. {\bf 153}, Amer. Math. Soc., Providence, RI, 1993.


\bibitem{FLZ21} 
Z. Feng, C. Li, J. Zhang, Equivariant correspondences and the inductive Alperin weight condition for type~$\mathsf A$. \emph{Trans. Amer. Math. Soc. \bf 374} (2021),  8365--8433.



\bibitem{FLZ23} 
Z. Feng, C. Li, J. Zhang, On the inductive blockwise Alperin weight condition for type~$\mathsf A$. \emph{J. Algebra \bf 631}, (2023) 287--305.

\bibitem{FLZ19}
Z. Feng, Z. Li, J. Zhang, On the inductive blockwise Alperin weight condition for classical groups. \emph{J. Algebra \bf 537} (2019), 381--434.

\bibitem{FS23}
Z. Feng, B. Sp\"ath, Unitriangular basic sets, Brauer characters and coprime actions. \emph{Represent. Theory \bf 27} (2023), 115--148.



\bibitem{FZ22}
Z. Feng, J. Zhang, Alperin weight conjecture and related developments. \emph{Bull. Math. Sci. \bf 12} (2022), 2230005.



\bibitem{Fu}
Q.~Fu, An equivariant bijection of irreducible Brauer characters above the Dade--Glauberman--Nagao correspondence.  
\emph{J. Algebra \bf 689} (2026), 710--738.


\bibitem{CHEVIE}
M. Geck, G. Hiss, F. L\"ubeck, G. Malle, G. Pfeiffer, CHEVIE -- a system for computing and processing generic character tables for finite groups of Lie type, Weyl groups and Hecke algebras, \emph{Appl. Algebra Eng. Commun. Comput. \bf 7} (1996), 175--210.



\bibitem{GLS98}
D. Gorenstein, R. Lyons, R. Solomon, \emph{The Classification of the Finite Simple Groups, Number 3}. Math. Surveys Monogr., {vol. \bf 40}, American Mathematical Society, Providence, RI, 1998.



\bibitem{Is84}
I.\,M. Isaacs,  Characters of $\pi$-separable groups. \emph{J. Algebra \bf 86} (1984), 98--128.



\bibitem{Isaacs_Reduction_of_McKay_conj}
I.\,M.~Isaacs, G.~Malle, G.~Navarro, A reduction theorem for the McKay conjecture. {\itshape Invent. Math.} \textbf{170} (2007), 33--101. 



\bibitem{JLPW}
C. Jansen, K. Lux, R. Parker, R. Wilson, \emph{An Atlas of Brauer Characters}. London Mathematical Society Monographs. New Series, vol. {\bf 11}. Oxford Science Publications. The Clarendon Press, Oxford University Press, New York, 1995.



\bibitem{Jo22b}
B. Johansson, { On the inductive McKay--Navarro condition for finite groups of Lie type in their defining characteristic.} \emph{J. Algebra \bf 610} (2022), 223--240.



\bibitem{Jo22}
B. Johansson, {\it The inductive McKay--Navarro condition for finite groups of Lie type.} Dissertation, TU Kaiserslautern, 2022.



\bibitem{Jo24}
B. Johansson, The inductive McKay--Navarro condition for the Suzuki and Ree groups and for groups with non-generic Sylow normalizers. \emph{J. Pure Appl. Algebra \bf 228} (2024), 107469. 



\bibitem{Kl86}
P.\,B.~Kleidman, {\it The subgroup structure of some finite simple groups.} Ph.D. Thesis, Trinity College, 1986.



\bibitem{LM80}
P. Landrock, G.\,O.~Michler, Principal 2-blocks of the simple groups of Ree type. \emph{Trans. Amer. Math. Soc. \bf 260} (1980), 83--111.



\bibitem{MS16}
{ G. Malle, B. Sp\"ath}, Characters of odd degree.  \emph{Ann. of Math. (2)  \bf 184}  (2016), 869--908.



\bibitem{MRR23}
J.\,M.~Mart\'inez, N.~Rizo, D.~Rossi, The Alperin weight conjecture and the Glauberman correspondence via character triples. 	
\emph{Algebra Number Theory \bf 20} (2026), 333--382.

\bibitem{Nagao}
H. Nagao, Y. Tsushima, \emph{Representations of Finite Groups}. Academic Press Inc., Boston, 1989.






\bibitem{Nav-Ann}
G. Navarro, The Mckay conjecture and Galois automorphisms. {\itshape Ann. of Math. (2)} \textbf{160} (2004), 1129--1140.



\bibitem{Nav-Mckaybook}
G. Navarro, \emph{Character Theory and the McKay Conjecture}. Cambridge Stud. Adv. Math., vol. \textbf{175}, Cambridge University Press, Cambridge, 2018.



\bibitem{Na23}
G. Navarro, Decomposition matrices over unitriangulars. \emph{J. Algebra \bf 619} (2023), 572--582.



\bibitem{Nav-GMckay}
G. Navarro, B. Sp\"ath, C. Vallejo, A reduction theorem for the Galois--Mckay conjecture. {\itshape Trans. Amer. Math. Soc.} \textbf{373} (2020), 6157--6183.



\bibitem{Nav1}
G.~Navarro, P.\,H.~Tiep, A reduction theorem for the Alperin weight conjecture. {\itshape Invent. Math.} \textbf{184} (2011), 529--565.



\bibitem{RS22}
L.~Ruhstorfer, A.\,A.~Schaeffer Fry, The inductive McKay--Navarro conditions for the prime~2 and some groups of Lie type. \emph{Proc. Amer. Math. Soc. Ser. B \bf 9} (2022), 204--220.


\bibitem{Sp17}
B. Sp\"ath, Inductive conditions for counting conjectures via character triples. In: \emph{Representation Theory -- Current Trends and Perspectives}. EMS Ser. Congr. Rep., Eur. Math. Soc., Z\"urich, 2017, pp.~665--680.

\bibitem{Sp18}
B. Sp\"ath,
Reduction theorems for some global-local conjectures. In:\emph{ Local representation theory and simple groups.} EMS Ser. Lect. Math. 23–61.


\bibitem{SV16}
B. Sp\"ath, C. Vallejo Rodríguez, Brauer characters and coprime action. \emph{J. Algebra \bf 457} (2016), 276--311.

\bibitem{SV20}
B. Srinivasan, C.\,R.~Vinroot, Galois group action and Jordan decomposition of characters of finite reductive groups with connected center. \emph{J. Algebra \bf 558} (2020), 708--727.



\bibitem{Turull_The_strengthened_Alperin_Weight Conjecture_for.}
A. Turull, The strengthened Alperin weight conjecture for $p$-solvable groups. {\itshape J. Algebra} \textbf{398} (2014), 469--480.



\bibitem{Wa69}
J.\,H.~Walter, The characterization of finite groups with abelian Sylow 2-subgroups. \emph{Ann. of Math. (2) \bf 89} (1969), 405--514.



\bibitem{Wa66}
H.\,N.~Ward, On Ree's series of simple groups. \emph{Trans. Amer. Math. Soc. \bf 121} (1966), 62--89.



\bibitem{We16}
P. Webb,  \emph{A Course in Finite Group Representation Theory}. Cambridge Stud. Adv. Math., {vol. \bf 161}, Cambridge University Press, Cambridge, 2016.

\end{thebibliography}

\end{document}